\newtheorem{theorem}{Theorem}[section]
\newtheorem{lemma}[theorem]{Lemma}
\newtheorem{corollary}[theorem]{Corollary}
\theoremstyle{definition}
\theoremstyle{remark}
\newtheorem{remark}[theorem]{Remark}
\numberwithin{equation}{section}
\begin{document}
\setlength{\baselineskip}{1.2\baselineskip}

\title[Modified mean curvature flow in hyperbolic space]
{Modified mean curvature flow of star-shaped hypersurfaces in hyperbolic space}

\author{Longzhi Lin}
\address{Department of Mathematics\\Johns Hopkins University\\3400 North Charles Street\\Baltimore, MD 21218-2686\\USA}
\email{lzlin@math.jhu.edu}

\author{Ling Xiao}
\address{Department of Mathematics\\Johns Hopkins University\\3400 North Charles Street\\Baltimore, MD 21218-2686\\USA}
\email{lxiao@math.jhu.edu}


\subjclass[2010]{Primary 53C44; Secondary 35K20, 58J35.}


\begin{abstract}
We define a new version of modified mean curvature flow (MMCF) in hyperbolic space $\mathbb{H}^{n+1}$, which interestingly turns out to be the natural negative $L^2$-gradient flow of the energy functional defined by De Silva and Spruck in \cite{DS09}. We show the existence, uniqueness and convergence of the MMCF of complete embedded star-shaped hypersurfaces with fixed prescribed asymptotic boundary at infinity. As an application, we recover the existence and uniqueness of smooth complete hypersurfaces of constant mean curvature in hyperbolic space with prescribed asymptotic boundary at infinity, which was first shown by Guan and Spruck in \cite{GS00}, see also \cite{DS09}.
\end{abstract}

\maketitle

\section{Introduction} \label{lx-I}

Let $\mathbf{F}(z,t): {\mathbb{S}}^n_{+} \times [0,\infty) \rightarrow \mathbb{H}^{n+1}$ be the complete embedded star-shaped hypersurfaces (as complete radial graphs over ${\mathbb{S}}^n_{+}$) moving by the modified mean curvature flow ({\bf{MMCF}}) in hyperbolic space $\mathbb{H}^{n+1}$, where ${\mathbb{S}}^n_+$ is the upper hemisphere of the unit sphere ${\mathbb{S}}^n$ in $\mathbb{R}^{n+1}$ and the half-space model of $\mathbb{H}^{n+1}$ is used. That is, $\mathbf{F}(\cdot,t)$ is a one-parameter family of smooth immersions with images $\Sigma_{t}=\mathbf{F}({\mathbb{S}}^n_+, t),$ satisfying the evolution equation
\begin{equation}\label{MMCF0}
\left\{
\begin{aligned}
\frac{\partial}{\partial t} \mathbf{F}(\mathbf{z},t) &= (H-\sigma)\,\nu_H\,,\quad (\mathbf{z},t) \in {\mathbb{S}}^n_+\times [0,\infty)\,,\\
\mathbf{F}(\mathbf{z},0) &= \Sigma_0\,,\quad \mathbf{z} \in {\mathbb{S}}^n_+ \,,
\end{aligned}
\right.
\end{equation}
where $H$ denotes the hyperbolic mean curvature of $\Sigma_t$, $\sigma \in (-1,1)$ is a constant, and
$\nu_H$ denotes the outward unit normal of $\Sigma_t$ with respect to the hyperbolic metric. By the half-space model of $\mathbb{H}^{n+1}$, we mean
$$\mathbb{H}^{n+1}=\{(x',x_{n+1})\in \mathbb{R}^{n+1}: x_{n+1} > 0\}$$
equipped with the hyperbolic metric
$$ds^2_H=\frac{1}{x^2_{n+1}}ds^2_E,$$
where $ds^2_E$ denotes the standard Euclidean metric on $\mathbb{R}^{n+1}.$ One identifies the hyperplane $\{x_{n+1} = 0\} = {\mathbb{R}}^n \times \{0\} \subset {\mathbb{R}}^{n+1}$ as the infinity of $\mathbb{H}^{n+1}$, denoted by $\partial_{\infty} \mathbb{H}^{n+1}$.

In this paper we consider the questions of the existence, uniqueness and convergence of the MMCF of complete embedded star-shaped hypersurfaces (as radial graphs) in the hyperbolic space $\mathbb{H}^{n+1}$ with a fixed prescribed asymptotic boundary at infinity, under some natural geometric conditions on the initial hypersurfaces. Namely, we consider the following Dirichlet problem of the MMCF:
\begin{equation} \label{MMCF}
\left\{
\begin{aligned}
\frac{\partial}{\partial t} \mathbf{F}(\mathbf{z},t) &= (H-\sigma)\,\nu_H \,,\quad (\mathbf{z},t) \in {\mathbb{S}}^n_+\times [0,\infty)\,,\\
\mathbf{F}(\mathbf{z},0) &= \Sigma_0 \,, \quad \mathbf{z} \in {\mathbb{S}}^n_+ \,,\\
\mathbf{F}(\mathbf{z},t) &= \Gamma(\mathbf{z})\,, \quad  (\mathbf{z},t)\in \partial {\mathbb{S}}^n_{+} \times [0,\infty) \,,
\end{aligned}
\right.
\end{equation}
where $\sigma\in (-1,1)$ and $\Gamma = \partial \Sigma_0$ is the boundary of a star-shaped $C^{1+1}$ domain in $\{x_{n+1} = 0\}$ (the case of $\Gamma$ being only continuous will also be discussed). As an application, we shall also show that we can use the MMCF to deform a complete regular hypersurface to get one with constant hyperbolic mean curvature $\sigma$ in hyperbolic space $\mathbb{H}^{n+1}$.

\vskip 2mm
Mean curvature flow ({\bf{MCF}}) was first studied by Brakke \cite{B78} in the context
of geometric measure theory. Later, smooth compact surfaces evolved by MCF
in Euclidean space were investigated by Huisken in \cite{H84} and \cite{H90}, and on
arbitrary ambient manifolds in \cite{H86}. The study of the evolution of
complete graphs by MCF in $R^{n+1}$ was also studied in \cite{EH89}, the result
being improved in \cite{EH91}. See also \cite{H89} for the nonparametric MCF with Dirichlet boundary condition. In \cite{U03}, Unterberger considered the MCF in hyperbolic space, namely, the case of $\sigma=0$ in equation \eqref{MMCF0}. And he obtained that if the initial surface $\Sigma_0$ has bounded hyperbolic height over ${\mathbb{S}}^n_+$ then under the MCF, $\Sigma_t$ converges in $C^{\infty}$ to ${\mathbb{S}}^n_+$ which has constant mean curvature $0$. Note that no Dirichlet boundary data was imposed in \cite{U03}\,. We shall remark that a similar MMCF (which is called the volume preserving MCF) was studied by Huisken in \cite{H87} for closed, uniformly convex hypersurface in $\mathbb{R}^{n+1}$, where the constant $\sigma$ in \eqref{MMCF0} was replaced by the average of the mean curvature of $\Sigma_t$, see also \cite{CM07} for this volume preserving MCF in the hyperbolic space. With the average of the mean curvature of $\Sigma_t$ in the place of the constant $\sigma$, one cannot expect what the flow will converge to (if it converges), while we see directly that if the MMCF \eqref{MMCF0} converges then it converges to a hypersurface with constant mean curvature $\sigma$. Namely, we can actually prescribe the constant mean curvature $\sigma\in (-1,1)$ for the limiting hypersurface. This is the important feature and novelty of our version of MMCF defined in this work, which is also special for the hyperbolic setting. Finally, we shall remark that it would be very interesting to see what the corresponding MMCF is in the Euclidean setting.

\vskip 2mm
The problem of finding smooth complete hypersurfaces of constant mean curvature in hyperbolic space with prescribed asymptotic boundary at infinity has also been studied over the years, see \cite{A82}, \cite{HL87}, \cite{Lin89}, \cite{T96} and \cite{NS96}. In \cite{GS00} Guan and Spruck proved the existence and uniqueness of smooth complete hypersurfaces of constant mean curvature $\sigma\in (-1,1)$ in hyperbolic space with prescribed asymptotic boundary at infinity. In \cite{DS09}, among other, De Silva and Spruck recovered this result using the method of calculus of variations and representation techniques. We remark that our paper can be thought of as a flow version of their variational method, see Section \ref{GFlow}\,. For the existence of hypersurfaces of constant (general) curvature in hyperbolic space $\mathbb{H}^{n+1}$ which have a prescribed asymptotic boundary at infinity, see \cite{GSZ09} and \cite{GS08}\,.

\vskip 2mm
Due to the degeneracy of the MMCF \eqref{MMCF} for radial graphs at infinity (see equation \eqref{MMCF2} below), we will begin with considering the approximate problem. For fixed $\epsilon>0$ sufficiently small, let $\Gamma_{\epsilon}$ be the vertical translation of $\Gamma$ to the plane $\{x_{n+1} = \epsilon\}$ and let $\Omega_{\epsilon}$ be the subdomain of ${\mathbb{S}}^n_+$ such that $\Gamma_{\epsilon}$ is the radial graph over $\partial \Omega_{\epsilon}$ (see Figure \ref{PIC1}). We consider the following Dirichlet problem of the approximate modified mean curvature flow ({\bf{AMMCF}}):
\begin{equation} \label{AMMCF}
\left\{
\begin{aligned}
\frac{\partial}{\partial t} \mathbf{F}(\mathbf{z},t) &= (H-\sigma)\,\nu_H \,,\quad (\mathbf{z},t) \in \Omega_{\epsilon}\times [0,\infty)\,,\\
\mathbf{F}(\mathbf{z},0) &= \Sigma_0^{\epsilon} \,, \quad \mathbf{z} \in \Omega_{\epsilon}\,,\\
\mathbf{F}(\mathbf{z},t) &= \Gamma_{\epsilon}(\mathbf{z}), \quad \text{for all   } (\mathbf{z},t)\in \partial \Omega_{\epsilon} \times [0,\infty) \,,
\end{aligned}
\right.
\end{equation}
where $\Sigma_0^{\epsilon} = \mathbf{F}(\Omega_{\epsilon}, 0)$, $\partial \Sigma_0^{\epsilon} = \Gamma_{\epsilon}$ and $\sigma \in (-1,1)$\,.
\input{P1.TpX}

For any $\epsilon\geq 0$ sufficiently small and any point $P \in \partial \Sigma_0^{\epsilon} = \Gamma_{\epsilon}$ (denoting $\Sigma_0^0 = \Sigma_0$ and $\Gamma_0 = \Gamma$), the uniform star-shapedness of $\Gamma_{\epsilon}$ implies there exist balls $B_{R_1}(a, P)$ and $B_{R_2}(b,P)$ with radii $R_1>0$ and $R_2>0$ and centered at $a = (a', -\sigma R_1)$ and $b = (b', \sigma R_2)$, respectively (see also ``equidistance spheres'' in Section \ref{lx-es} below), such that $\{x_{n+1} = \epsilon\} \cap B_{R_1}(a, P)$ is internally tangent to $\Gamma_{\epsilon}$ at $P$ and $\{x_{n+1} = \epsilon\} \cap B_{R_2}(b,P)$ is externally tangent to $\Gamma_{\epsilon}$ at $P$. Note that in a small neighborhood $B_{\delta}(P)$ around $P$ for some $\delta>0$, both $\partial B_{R_1}(a,P)\cap B_{\delta}(P)$ and $\partial B_{R_2}(b, P)\cap B_{\delta}(P)$ can be locally represented as radial graphs. To state our main results appropriately, we say that the initial hypersurfaces $\Sigma_0^{\epsilon}$'s satisfy the uniform interior (resp. exterior) local ball condition if for all $\epsilon \geq 0$ sufficiently small and all $P\in \Gamma_{\epsilon}$, $\Sigma_0^{\epsilon} \cap B_{\delta}(P)\cap B_{R_1}(a, P) = \{P\}$ (resp. $\Sigma_0^{\epsilon} \cap B_{\delta}(P)\cap B_{R_2}(b, P)= \{P\}$, see Figure \ref{PIC2}), and the local radial graph $\partial B_{R_1}(a, P)\cap B_{\delta}(P)$ (resp. $\partial B_{R_2}(b, P)\cap B_{\delta}(P)$) has a uniform Lipschitz bound depending only on the star-shapedness of $\Gamma$. If $\Sigma^{\epsilon}_0$'s satisfy both of the uniform interior and exterior local ball conditions, then we say $\Sigma^{\epsilon}_0$'s satisfy the uniform local ball condition.{\footnote{Such initial hypersurfaces exist and can be constructed explicitly since the balls $B_{R_1}(a,P)$ and $B_{R_2}(b,P)$ can be constructed with uniform radii (see equation \eqref{RADII}) and the tangent plane to them at $P$ can be computed explicitly as well (see equation \eqref{Tan}).}}
\input{P2.TpX}
\vskip 2mm

The main results in this paper are the following.
\begin{theorem}\label{lx-I0}
Let $\Gamma$ be the boundary of a star-shaped $C^{1+1}$ domain in $\{x_{n+1} = 0\}=\partial_{\infty}\mathbb{H}^{n+1}$ and $\Gamma_{\epsilon}$ be its vertical lift to $\{x_{n+1} = \epsilon\}$ for $\epsilon>0$ sufficiently small. Let $\Sigma_0 = \lim_{\epsilon\to 0} \Sigma_0^{\epsilon}$ be the limiting hypersurface of radial graphs $\Sigma_0^{\epsilon} \in C^{1+1}(\overline{\Omega_{\epsilon}})$ with $\partial \Sigma_0^{\epsilon} = \Gamma_{\epsilon}$. Suppose $\Sigma_0^{\epsilon}$'s have a uniform Lipschitz bound and satisfy the uniform local ball condition. Then
\begin{itemize}
\item[(i)] there exists a unique solution $\mathbf{F}(\mathbf{z},t) \in C^{\infty}({\mathbb{S}}^n_+\times (0,\infty) \cap C^{1+1, \frac{1}{2}+\frac{1}{2}}(\overline{{\mathbb{S}}^n_+}\times (0,\infty))\cap C^0(\overline{{\mathbb{S}}^n_+}\times [0,\infty))$ to the MMCF \eqref{MMCF};
\item[(ii)] there exist $t_i \nearrow \infty$ such that $\Sigma_{t_i} = F({\mathbb{S}}^n_+, t_i)$ converges to a unique stationary smooth complete hypersurface $\Sigma_{\infty}\in C^{\infty}(\mathbb{S}^n_+)\cap C^{1+1}(\overline{\mathbb{S}^n_+})$ (as a radial graph over $\mathbb{S}^n_+$) which has constant hyperbolic mean curvature $\sigma$ and $\partial \Sigma_{\infty} = \Gamma$ asymptotically. Also, each $\Sigma_t$ is a complete radial graph over ${\mathbb{S}}^n_+$;

\item[(iii)] if additionally $\Sigma_0^{\epsilon}$ has mean curvature $H^{\epsilon}\geq \sigma$ for all $\epsilon>0$ sufficiently small, then $\Sigma_t$ converges uniformly to $\Sigma_{\infty}$ for all $t$.
\end{itemize}
\end{theorem}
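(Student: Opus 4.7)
The plan is to solve the compact approximate problem \eqref{AMMCF} on $\Omega_{\epsilon}$ for each $\epsilon>0$, establish $\epsilon$-uniform a priori estimates, let $\epsilon\to 0$ to produce a solution of \eqref{MMCF}, and then extract long-time convergence from the gradient-flow structure. As a first step, write $\Sigma_t$ as the radial graph $\mathbf{F}(\mathbf{z},t)=e^{\rho(\mathbf{z},t)}\mathbf{z}$ over $\Omega_{\epsilon}\subset {\mathbb S}^n_+$; computing the hyperbolic mean curvature $H$ in terms of $\rho$ and its first and second derivatives with respect to the round metric on ${\mathbb S}^n$ turns \eqref{AMMCF} into a strictly quasilinear parabolic Dirichlet problem for $\rho$, to which classical short-time existence and smoothing apply.

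The core of the argument is a set of a priori estimates that are \emph{uniform in $\epsilon$}. A $C^0$ bound for $\rho$ follows by comparison with the equidistance spheres in Figure \ref{PIC2}, which have constant hyperbolic mean curvature $\sigma$ and serve as global sub- and supersolutions via the parabolic maximum principle. The boundary gradient estimate is where the uniform local ball condition enters crucially: at each $P\in\Gamma_{\epsilon}$ the tangent balls $B_{R_1}(a,P)$ and $B_{R_2}(b,P)$ sandwich $\Sigma_t$ in a fixed neighborhood of $P$, giving pointwise hyperbolic gradient control depending only on the star-shapedness of $\Gamma$. An interior gradient estimate is then obtained by applying the parabolic maximum principle to an auxiliary quantity such as the support function $\langle \nu_H,\partial_r\rangle^{-1}$ (the hyperbolic cosine of the angle between the outward normal and the radial direction), which propagates the boundary bound throughout $\overline{\Omega_{\epsilon}}$. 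Once $C^1$ bounds are in hand the equation is uniformly parabolic, so Krylov--Evans and Schauder theory plus bootstrapping yield interior $C^{k,\alpha}$ bounds for every $k$ and the asserted $C^{1+1,\frac12+\frac12}$ regularity up to the spatial boundary.

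With these $\epsilon$-uniform bounds the AMMCF exists globally on $\Omega_{\epsilon}\times[0,\infty)$, and an Arzel\`a--Ascoli argument on compact subsets of ${\mathbb S}^n_+\times(0,\infty)$ extracts a limit as $\epsilon\to 0$ that solves \eqref{MMCF} with the regularity in (i); uniqueness follows from the parabolic maximum principle applied to the difference of the radial functions of two solutions. For (ii), use that MMCF is the negative $L^2$-gradient flow of the De Silva--Spruck energy $\mathcal{E}$, so
\[
\frac{d}{dt}\mathcal{E}(\Sigma_t)=-\int_{\Sigma_t}(H-\sigma)^2\,dA_H.
\]
The $C^0$ bound provides a lower bound for $\mathcal{E}(\Sigma_t)$, so $\int_0^{\infty}\!\int_{\Sigma_t}(H-\sigma)^2\,dA_H\,dt<\infty$. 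Pick $t_i\to\infty$ along which the spatial integral tends to zero; the smooth compactness from the previous step yields subsequential smooth convergence on compacta to a stationary $\Sigma_{\infty}$ with $H\equiv\sigma$ and asymptotic boundary $\Gamma$, and the uniqueness of such a CMC hypersurface from \cite{GS00,DS09} identifies $\Sigma_{\infty}$ independently of the subsequence. For (iii), the condition $H\geq\sigma$ at $t=0$ is preserved by the parabolic maximum principle applied to the evolution of $H-\sigma$, so $\rho_t\leq 0$ pointwise for all $t$; monotone convergence together with the $C^0$ barrier upgrades (ii) to uniform convergence in $t$.

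The principal obstacle is the $\epsilon$-uniform boundary gradient estimate together with the matching up-to-boundary regularity. The Euclidean gradient of $\rho$ blows up as $\Gamma$ is approached (the hypersurface is asymptotically vertical at $\partial_{\infty}\mathbb{H}^{n+1}$), so one must work with the hyperbolic gradient, and the uniform local ball condition was formulated precisely so that the tangent balls of uniform radii $R_1,R_2$ produce Lipschitz barriers whose constants depend only on the star-shapedness of $\Gamma$. Marrying this to an interior gradient estimate on a degenerating domain and then lifting both to the asserted $C^{1+1,\frac12+\frac12}$ regularity up to $\partial\mathbb{S}^n_+\times(0,\infty)$ is the most delicate technical step.
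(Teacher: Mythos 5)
Your overall strategy — solve the AMMCF on $\Omega_{\epsilon}$, establish $\epsilon$-uniform a priori estimates, pass to the limit, and exploit the gradient-flow structure for convergence — matches the paper's plan, and your $C^0$ argument via equidistance spheres and boundary gradient argument via the uniform local ball condition are both on target. But the central technical step, the $\epsilon$-uniform interior gradient bound, is where your sketch goes wrong. You propose applying the parabolic maximum principle to the support function $\langle\nu_H,\partial_r\rangle^{-1}$, which in the paper's notation is exactly $w=\sqrt{1+|\nabla v|^2}$. The paper computes (Lemma \ref{lx-lt1}) that $w$ satisfies only $(\partial_t - L)w \leq 2w$, which allows exponential growth in $t$. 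This suffices for fixed $\epsilon$ up to a finite maximal time, but it does \emph{not} propagate the boundary bound uniformly in $\epsilon$ and $t$, which is precisely what is needed to pass to the limit $\epsilon\to 0$. The paper's key insight (Theorem \ref{lx-c1e1}) is to instead work with the modified quantity $e^v\bigl(w+\sigma(y+\mathbf{e}\cdot\nabla v)\bigr)$, for which a delicate computation shows $(\partial_t - L)$ applied to it is $\le -e^v(H-\sigma)^2 w \leq 0$; the maximum principle then yields a gradient bound depending only on the parabolic boundary data, which is uniform in $\epsilon$ by the local ball condition. Without identifying a quantity with a sign on $\mathcal L$ rather than just a linear growth bound, the limit $\epsilon\to 0$ in (i) does not close.

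Two smaller points. In (iii) your sign is reversed: $H\geq\sigma$ and $\partial_t v = yw(H-\sigma)$ give $\partial_t v \geq 0$, i.e.\ $\rho$ is \emph{non-decreasing}, and since it is bounded above by the $C^0$ barrier, monotone convergence applies; the preservation of $H\geq\sigma$ is proved by showing $yw(H-\sigma)$ itself satisfies a linear parabolic equation $(\partial_t - \widetilde L)(yw(H-\sigma))=0$ (Lemma \ref{lx-ce1}). And for the $C^{1+1,\frac12+\frac12}$ regularity up to the boundary in (i) you correctly flag it as the delicate step but don't supply the argument; the paper uses a rescaling $\tilde v(x)=\frac1\delta v(\delta x)$ that moves boundary points to interior points of a fixed ball, compares $v$ with the radial-graph representation $\eta$ of the tangent plane to the interior equidistance sphere (which also solves the stationary equation), shows $v-\eta = O(|\mathbf z - \mathbf z_0|^2)$, and applies parabolic Schauder estimates to the uniformly parabolic linear equation satisfied by $\tilde v - \tilde\eta$.
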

\noindent In fact, if $\Sigma^{\epsilon}_0$ has hyperbolic mean curvature $H^{\epsilon}\geq \sigma$ for all $\epsilon>0$ sufficiently small, then the uniform interior local ball condition on $\Sigma^{\epsilon}_0$'s can be relaxed.
\begin{theorem}\label{lx-I01}
Let $\Gamma$ and $\Gamma_{\epsilon}$ be as in Theorem \ref{lx-I0} and $\Sigma_0 = \lim_{\epsilon\to 0} \Sigma_0^{\epsilon}$ be the limiting hypersurface of radial graphs $\Sigma_0^{\epsilon} \in C^2(\Omega_{\epsilon})\cap C^{1+1}(\overline{\Omega_{\epsilon}})$ with $\partial \Sigma_0^{\epsilon} = \Gamma_{\epsilon}$. Suppose $\Sigma_0^{\epsilon}$ has mean curvature $H^{\epsilon}\geq \sigma$ for all $\epsilon>0$ sufficiently small and $\Sigma_0^{\epsilon}$'s have a uniform Lipschitz bound and satisfy the uniform exterior local ball condition. Then there exists a unique solution $\mathbf{F}(\mathbf{z},t) \in C^{\infty}({\mathbb{S}}^n_+\times (0,\infty) \cap C^{0+1, 0+\frac{1}{2}}(\overline{{\mathbb{S}}^n_+}\times (0,\infty))\cap C^0(\overline{{\mathbb{S}}^n_+}\times [0,\infty))$ to the MMCF \eqref{MMCF}. Moreover, $\Sigma_{t} = F({\mathbb{S}}^n_+, t)$ converges uniformly for all $t$ to a unique stationary smooth complete hypersurface $\Sigma_{\infty}\in C^{\infty}(\mathbb{S}^n_+)\cap C^{1+1}(\overline{\mathbb{S}^n_+})$ (as a radial graph over $\mathbb{S}^n_+$) which has constant hyperbolic mean curvature $\sigma$ and $\partial \Sigma_{\infty} = \Gamma$ asymptotically. Also, each $\Sigma_t$ is a complete radial graph over ${\mathbb{S}}^n_+$.
\end{theorem}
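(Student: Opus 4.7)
The plan is to parallel the proof of Theorem~\ref{lx-I0}, first solving the approximate problem (AMMCF)~\eqref{AMMCF} on $\Omega_\epsilon$ and then passing $\epsilon \to 0$, but to substitute the hypothesis $H^\epsilon \geq \sigma$ for the interior local ball condition. Parameterize $\Sigma_t^\epsilon$ as a radial graph over $\Omega_\epsilon$ by a radial function $\rho^\epsilon(\mathbf{z},t)$; in this parametrization the MMCF becomes a scalar quasilinear uniformly parabolic Dirichlet problem for $\rho^\epsilon$ with $C^{1+1}$ initial data and fixed $C^{1+1}$ boundary data $\Gamma_\epsilon$. Classical parabolic theory yields short-time existence and smoothness.

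The first key step is preservation of the sign condition. Computing the evolution equation $(\partial_t - \Delta_{\Sigma_t})(H-\sigma)$ along MMCF in $\mathbb{H}^{n+1}$, the zeroth-order piece vanishes when $H=\sigma$, and the parabolic maximum principle together with the fact that $\Gamma_\epsilon$ is pinned yields that $H^\epsilon(\cdot,t)\geq\sigma$ persists for all $t\geq 0$. As a consequence the normal speed $H-\sigma$ keeps a single sign, so $\rho^\epsilon(\mathbf{z},t)$ is monotone in $t$. Simultaneously, the uniform exterior local ball condition provides stationary super-solutions near each point $P\in\Gamma_\epsilon$ in the form of pieces of equidistance spheres of hyperbolic mean curvature $\sigma$; these give a uniform one-sided Lipschitz bound on $\rho^\epsilon$ at the boundary, and combined with the initial Lipschitz bound and the monotonicity in $t$ this is promoted to a uniform two-sided $C^{0+1}$ estimate in space and $C^{0+\frac{1}{2}}$ in time.

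Once a uniform gradient bound is in hand, the equation is uniformly parabolic with smooth coefficients, so Krylov--Safonov and Schauder theory give interior $C^\infty$ estimates uniform in $\epsilon$ and $t$. A standard diagonal extraction produces a smooth limit $\mathbf{F}(\mathbf{z},t)$ on $\mathbb{S}^n_+\times(0,\infty)$ solving \eqref{MMCF}. The asymptotic boundary condition $\mathbf{F}(\mathbf{z},t)\to\Gamma(\mathbf{z})$ follows by sandwiching between the exterior barriers on one side and the monotonically moving graphs on the other. Monotonicity in $t$ upgrades the subsequential limit to uniform convergence as $t\to\infty$ to a stationary radial graph $\Sigma_\infty$; stationarity forces $H_{\Sigma_\infty}\equiv\sigma$, and the same barrier argument gives $\partial\Sigma_\infty=\Gamma$ asymptotically. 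Uniqueness of both the flow and its stationary limit follows from the parabolic comparison principle applied to the equation for $\rho$.

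The hard part will be the uniform boundary Lipschitz estimate in the absence of an interior local ball condition: one cannot pinch $\Sigma_t^\epsilon$ between two barriers at $\Gamma_\epsilon$ as in Theorem~\ref{lx-I0}, and so gradient control at the boundary must be pieced together from the exterior barrier on one side together with the definite sign of $H-\sigma$ on the other, using that monotonicity in $t$ prevents $\rho^\epsilon$ from departing far from its initial Lipschitz profile. This delicate one-sided control is precisely what dictates the weaker regularity $C^{0+1,\,0+\frac{1}{2}}$ up to the boundary that appears in the statement and is the step most sensitive to the geometric hypotheses.
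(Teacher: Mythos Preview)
Your overall strategy matches the paper's: the assumption $H^\epsilon\ge\sigma$ makes $\Sigma_0^\epsilon$ a subsolution of the AMMCF, so it replaces the interior equidistance-sphere barrier from Theorem~\ref{lx-I0}; combined with the exterior local ball this pins $v^\epsilon(\cdot,t)$ between $v_0^\epsilon$ and $\varphi_2^\epsilon$ near $\partial\Omega_\epsilon$, giving the uniform boundary Lipschitz bound. The monotonicity of $v^\epsilon$ in $t$ (which the paper obtains exactly as you do, via Lemma~\ref{lx-ce1} showing $(\partial_t-\widetilde L)\big(yw(H-\sigma)\big)=0$ rather than an equation for $H-\sigma$ itself) then yields uniform convergence of the limit flow.

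There is, however, a genuine gap in your sketch. The sentence ``combined with the initial Lipschitz bound and the monotonicity in $t$ this is promoted to a uniform two-sided $C^{0+1}$ estimate in space'' hides the entire interior gradient estimate, and monotonicity plus boundary control does \emph{not} by itself propagate Lipschitz bounds into the interior for this quasilinear equation. In the paper this step is carried by Theorem~\ref{lx-c1e1}: the specific computation
\[
(\partial_t-L)\big(e^v(w+\sigma(y+\mathbf{e}\cdot\nabla v))\big)\le 0
\]
is what allows the maximum principle to push the parabolic-boundary gradient bound to a global one uniform in $\epsilon$ and $t$. Without invoking this (or an equivalent), your passage to $\epsilon\to0$ has no uniform ellipticity to stand on.

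A second, smaller omission: you do not address the $C^{1+1}(\overline{\mathbb{S}^n_+})$ regularity of $\Sigma_\infty$ claimed in the statement. The flow itself is only $C^{0+1,0+\frac12}$ up to the boundary here (as you correctly note), so the second-derivative bound for the limit has to come from a separate \emph{elliptic} argument of the type in Section~\ref{lx-ce} (the rescaling/linearization near $\partial\mathbb{S}^n_+$, cf.\ \cite{GS00}), applied directly to the stationary equation for $\Sigma_\infty$.
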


We will give an example of ``good'' initial hypersurfaces in Theorem \ref{lx-I01} in Section \ref{InitialS}. As an immediately corollary of Theorem \ref{lx-I0} or Theorem \ref{lx-I01}, we recover the following existence and uniqueness results due to Guan and Spruck.
\begin{corollary}\label{lx-I0'} \cite{GS00}
Suppose $\Gamma$ is the boundary of a star-shaped $C^{1+1}$ domain in $\{x_{n+1} = 0\}$ and let $|\sigma| < 1$.  Then there exists a unique smooth complete hypersurface $\Sigma$ of constant hyperbolic mean curvature $\sigma$ in $\mathbb{H}^{n+1}$ with asymptotic boundary $\Gamma$. Moreover, $\Sigma$ may be represented as a radial graph over ${\mathbb{S}}^n_+$ of a function in $C^{\infty}({\mathbb{S}}^n_+)\cap C^{1+1}(\overline{{\mathbb{S}}^n_+})$\,.
\end{corollary}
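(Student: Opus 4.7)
The plan is to derive Corollary \ref{lx-I0'} directly from Theorem \ref{lx-I01} by constructing a suitable approximating family of initial hypersurfaces for the MMCF and then appealing to a standard tangency argument for uniqueness.

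For existence, I would construct the family $\Sigma_0^{\epsilon}$ explicitly from the exterior equidistance spheres $B_{R_2}(b,P)$ described before the statement of Theorem \ref{lx-I0}. Each such spherical cap has constant hyperbolic mean curvature $\sigma$, and the footnote after the definition of the uniform local ball condition promises that the radii \eqref{RADII} and the tangent planes \eqref{Tan} can be written down explicitly and uniformly in $P \in \Gamma_{\epsilon}$. Taking $\Sigma_0^{\epsilon}$ to be an appropriate radial graph over $\Omega_{\epsilon}$ assembled from the outer envelope of these exterior equidistance caps (smoothed to a single $C^{1+1}$ radial graph matching $\Gamma_{\epsilon}$ on $\partial \Omega_{\epsilon}$) yields initial data with $H^{\epsilon} \geq \sigma$, a uniform Lipschitz bound depending only on the star-shapedness of $\Gamma$, and the uniform exterior local ball condition by construction. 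Theorem \ref{lx-I01} then produces a stationary limit $\Sigma_{\infty} \in C^{\infty}(\mathbb{S}^n_+) \cap C^{1+1}(\overline{\mathbb{S}^n_+})$ of constant hyperbolic mean curvature $\sigma$ with asymptotic boundary $\Gamma$, which is the hypersurface $\Sigma$ claimed.

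For uniqueness, suppose $\Sigma$ and $\Sigma'$ are two smooth complete hypersurfaces of constant hyperbolic mean curvature $\sigma$ with common asymptotic boundary $\Gamma$. I would run a sliding/tangency argument using the hyperbolic dilations $\phi_{\lambda}(x',x_{n+1}) = (\lambda x', \lambda x_{n+1})$, which are isometries of $\mathbb{H}^{n+1}$: dilate $\Sigma'$ by a large $\lambda > 1$ so that $\phi_{\lambda}(\Sigma')$ lies on one side of $\Sigma$, and then decrease $\lambda$ toward $1$. The strong maximum principle for the hyperbolic constant-mean-curvature equation forbids a first interior tangency unless $\phi_{\lambda}(\Sigma') \equiv \Sigma$, and since the asymptotic boundaries $\lambda \Gamma$ and $\Gamma$ differ for every $\lambda > 1$, no such tangency can occur before $\lambda = 1$. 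Reversing the roles of $\Sigma$ and $\Sigma'$ yields $\Sigma \equiv \Sigma'$.

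The main obstacle is the asymptotic end of the sliding argument: a priori, the first tangency point could escape to $\partial_{\infty} \mathbb{H}^{n+1}$ as $\lambda \to 1^+$, since the ambient hyperbolic metric degenerates there. I would handle this by first showing, via comparison against the foliation of $\mathbb{H}^{n+1} \setminus \{0\}$ by radial rays from the origin, that any such $\Sigma$ is automatically a radial graph over $\mathbb{S}^n_+$, and then using the $C^{1+1}$ asymptotic regularity at $\Gamma$ provided by the equidistance-sphere barriers to conclude that $\Sigma$ and $\phi_{\lambda}(\Sigma')$ remain uniformly separated near infinity for each fixed $\lambda > 1$. Consequently, every interior supremum of the difference of the corresponding radial functions is attained at a point in the interior of $\mathbb{S}^n_+$, where the strong maximum principle applied to the (uniformly elliptic) linearization of the mean-curvature equation closes the argument.
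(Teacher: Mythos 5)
Your overall strategy for existence (feed explicit ``good'' initial hypersurfaces into Theorem \ref{lx-I01}) is exactly the paper's approach, but the concrete construction you propose is not the one the paper uses and has a real gap. The paper builds $\Sigma_0^\epsilon$ in Section \ref{InitialS} by perturbing the horosphere $D_\epsilon$ (which solves the CMC problem \eqref{lx-is1.0} exactly for $\sigma=1$) via the implicit function theorem to obtain a smooth radial graph with constant hyperbolic mean curvature $\sigma_0 \in [\sigma,1)$ close to $1$; this gives $H^\epsilon = \sigma_0 \geq \sigma$ for free, and then the remainder of Section \ref{InitialS} establishes the uniform Lipschitz bound \eqref{lx-is5.0} and the uniform exterior local ball condition. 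Your construction instead takes an envelope of the exterior equidistance caps $S_2^\epsilon$ and smooths it, but you never verify that this smoothed envelope satisfies $H^\epsilon\geq\sigma$. That step is not automatic: the caps are local barriers positioned near $\Gamma_\epsilon$ only and do not furnish a radial graph over all of $\Omega_\epsilon$ without some interior filling, and the ridges created by the envelope-and-smoothing operation can easily flip the sign of $H-\sigma$ precisely where the caps overlap. You also need the uniform Lipschitz bound away from $\Gamma_\epsilon$, which is the content of Lemmas \ref{lx-es2}--\ref{lx-is1} in the paper and is nontrivial. Without these the hypotheses of Theorem \ref{lx-I01} are not verified, so the existence half of your argument has a genuine hole.

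For uniqueness the paper does not actually rederive it from the flow; the ``unique'' in Theorems \ref{lx-I0}, \ref{lx-I01} and \ref{SUB} refers to uniqueness of the flow limit, and the corollary credits \cite{GS00} for the uniqueness of the stationary CMC hypersurface. Your sliding argument with the dilations $\phi_\lambda$ is in the Guan--Spruck spirit and is morally right, but as you yourself flag, closing it requires knowing a priori that an arbitrary competing CMC hypersurface $\Sigma'$ is a radial graph with $C^{1+1}$ behavior at $\partial_\infty\mathbb{H}^{n+1}$ so the first tangency cannot escape to infinity. That regularity is itself part of the Guan--Spruck theorem and is not supplied by the MMCF machinery of this paper, so as written your uniqueness step is circular: you are invoking structural properties of $\Sigma'$ that you would only know once the corollary you are proving is established. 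A self-contained version would have to first prove radial-graph structure and boundary regularity for any such $\Sigma'$ using only Lemma \ref{C0Bound5} and the equidistance-sphere barriers, before running the sliding.

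In short: right skeleton, but you should swap your envelope construction for the paper's implicit-function-theorem construction of $\Sigma_0^\epsilon$ (or justify the sign of $H$ and the uniform Lipschitz bound for your construction), and either cite \cite{GS00} for uniqueness as the paper does or supply the missing a priori radial-graph and asymptotic regularity statements for a general competitor.
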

With the aid of an {\em a priori} interior gradient estimate (see Section \ref{IG1}) and via an approximation argument, the regularity of the boundary data $\Gamma$ in Theorem \ref{lx-I0} and  Theorem \ref{lx-I01} could be further relaxed to be only continuous and a similar result still holds (see Theorem \ref{ContR} below). As an application, we have
\begin{corollary}\label{lx-I0''} \cite{GS00}, \cite{DS09}
Suppose $\Gamma$ is the boundary of a continuous star-shaped domain in $\{x_{n+1} = 0\}$ and let $|\sigma| < 1$.  Then there exists a unique smooth complete hypersurface $\Sigma$ of constant hyperbolic mean curvature $\sigma$ in $\mathbb{H}^{n+1}$ with asymptotic boundary $\Gamma$. Moreover, $\Sigma$ may be represented as a radial graph over ${\mathbb{S}}^n_+$ of a function in $C^{\infty}({\mathbb{S}}^n_+)\cap C^{0}(\overline{{\mathbb{S}}^n_+})$\,.
\end{corollary}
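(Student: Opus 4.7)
The plan is to reduce the continuous-boundary case to the $C^{1+1}$-boundary case already handled in Corollary~\ref{lx-I0'}, via a two-sided monotone approximation and the interior gradient estimate of Section~\ref{IG1}. Since $\Gamma$ bounds a continuous star-shaped domain $D\subset\{x_{n+1}=0\}$, I would construct two sequences of $C^{1+1}$ star-shaped boundaries $\Gamma_k^{-}=\partial D_k^{-}$ and $\Gamma_k^{+}=\partial D_k^{+}$ with $D_k^{-}\nearrow D$ and $D_k^{+}\searrow\overline{D}$, both converging uniformly to $\Gamma$. Corollary~\ref{lx-I0'} supplies unique smooth complete CMC-$\sigma$ hypersurfaces $\Sigma_k^{\pm}$ with asymptotic boundary $\Gamma_k^{\pm}$, realized as radial graphs of $u_k^{\pm}\in C^{\infty}(\mathbb{S}^n_+)\cap C^{1+1}(\overline{\mathbb{S}^n_+})$. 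The comparison principle for the quasilinear CMC-$\sigma$ equation together with the chain of inclusions $D_k^{-}\subset D_{k+1}^{-}\subset D\subset D_{k+1}^{+}\subset D_k^{+}$ forces the monotonicity $u_k^{-}\leq u_{k+1}^{-}\leq u_{k+1}^{+}\leq u_k^{+}$ on $\mathbb{S}^n_+$.

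Next, I would invoke the interior gradient estimate from Section~\ref{IG1}, which controls $|\nabla u_k^{\pm}|$ on any compact set compactly contained in $\mathbb{S}^n_+$ solely in terms of $\|u_k^{\pm}\|_{C^0}$; the latter is bounded uniformly in $k$ by the sandwich $u_1^{-}\leq u_k^{\pm}\leq u_1^{+}$. Standard interior Schauder theory for the CMC-$\sigma$ equation then upgrades this to locally uniform $C^{\infty}$ bounds, so along a subsequence $u_k^{\pm}$ converges in $C^{\infty}_{\mathrm{loc}}(\mathbb{S}^n_+)$ (and monotonically on all of $\mathbb{S}^n_+$) to smooth CMC-$\sigma$ functions $u^{\pm}$ with $u^{-}\leq u^{+}$.

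The main obstacle is to show that the limits extend continuously to $\partial\mathbb{S}^n_+$ with the correct boundary trace, and that $u^{-}=u^{+}$. For any equatorial point $z_0\in\partial\mathbb{S}^n_+$ corresponding to a point $P\in\Gamma$ of radial coordinate $v_0$, and any $\varepsilon>0$, pick $k$ large so that the radial graph representations of $\Gamma_k^{\pm}$ lie within $\varepsilon/2$ of $v_0$ near $z_0$; the $C^{1+1}$ boundary continuity of $u_k^{\pm}$ furnished by Corollary~\ref{lx-I0'} then produces a neighborhood $U$ of $z_0$ on which $|u_k^{\pm}-v_0|<\varepsilon$, and the squeeze $u_k^{-}\leq u^{\pm}\leq u_k^{+}$ yields $|u^{\pm}-v_0|<\varepsilon$ on $U$. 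Therefore $u^{\pm}\in C^{0}(\overline{\mathbb{S}^n_+})$ and both have boundary trace describing $\Gamma$. Setting $w:=u^{+}-u^{-}\geq 0$ and differentiating the CMC operator along the segment joining $u^{-}$ to $u^{+}$ shows $w$ satisfies a linear elliptic equation in $\mathbb{S}^n_+$; since $w\in C^{0}(\overline{\mathbb{S}^n_+})$ vanishes on $\partial\mathbb{S}^n_+$, the maximum principle forces $w\equiv 0$, and $u:=u^{-}=u^{+}$ is the desired solution.

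Uniqueness follows by the same linearization: if $u,\tilde u\in C^{\infty}(\mathbb{S}^n_+)\cap C^{0}(\overline{\mathbb{S}^n_+})$ both realize $\Gamma$ asymptotically, then $u-\tilde u$ solves a linear elliptic equation on $\mathbb{S}^n_+$, is continuous on $\overline{\mathbb{S}^n_+}$, and vanishes on $\partial\mathbb{S}^n_+$, whence $u\equiv\tilde u$ by the maximum principle.
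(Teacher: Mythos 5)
Your proposal is plausible in outline, but it takes a genuinely different route from the paper's. The paper reaches Corollary \ref{lx-I0''} through the flow: it first proves Theorem \ref{ContR}, i.e.\ long-time existence and subconvergence of the MMCF itself with merely continuous boundary data, by approximating $\Gamma$ by $C^{1+1}$ boundaries, running the flow for each, and passing to the limit using the parabolic interior gradient estimate of Lemma \ref{IG} together with standard parabolic modulus-of-continuity estimates; the CMC-$\sigma$ hypersurface is then extracted as the subconvergent limit of the flow. You bypass the flow entirely and argue elliptically: you exhaust the continuous $\Gamma$ by $C^{1+1}$ boundaries from inside and outside, invoke Corollary \ref{lx-I0'} to produce monotone families of CMC-$\sigma$ radial graphs, and squeeze. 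This is closer in spirit to the original elliptic treatments in \cite{GS00} and \cite{DS09}, and it is arguably more economical if one is only after the stationary existence result rather than a flow statement.

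Two points to be careful about, since they are not furnished by the present paper. First, the chain $u_k^{-}\leq u_{k+1}^{-}\leq u_{k+1}^{+}\leq u_k^{+}$ requires a comparison principle for the degenerate elliptic CMC-$\sigma$ equation between two complete solutions with \emph{different} asymptotic boundaries; the paper only establishes comparison parabolically (Lemma \ref{C0Bound}) and against equidistance-sphere barriers in the stationary case (Lemma \ref{C0Bound5}), so your monotonicity step rests on the elliptic comparison result of \cite{GS00}, which you should cite or reprove. Second, Lemma \ref{IG} is stated for the parabolic equation \eqref{MMCF1}; applying it to the stationary $u_k^{\pm}$ is legitimate (view them as time-independent solutions of the flow), but it would be more direct to cite the elliptic interior gradient estimate of \cite{GS00}. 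With those references in place, the boundary-attachment argument via squeezing between $u_k^{\pm}$ and the uniqueness argument by linearization and the maximum principle (or, equivalently, by the equidistance-sphere barriers of Lemma \ref{C0Bound5}) go through as you describe.
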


The paper is organized as follows. In Section \ref{GFlow} we set up the problems, namely, the Dirichlet problems for the MMCF and AMMCF for radial graphs in hyperbolic space. In Section \ref{STES} we state the short-time existence result for the AMMCF and discuss the equidistance spheres in $\mathbb{H}^{n+1}$ which will serve as good barriers in many situations. We will prove Theorem \ref{lx-I0} in sections \ref{lx-lt}$-$\ref{lx-cv}. In Section \ref{lx-lt} we prove a global gradient estimate for the solution to the AMMCF and therefore the long-time existence of the AMMCF. In Section \ref{lx-c1e} we prove the uniform gradient estimate for the solutions to the AMMCF's, which leads to the long-time existence of the MMCF, while in Section \ref{lx-cv} we show the uniform convergence of the MMCF in the case of $H^{\epsilon}\geq \sigma$ initially for all $\epsilon>0$. We show the boundary regularity of the MMCF in Section \ref{lx-ce}. In Section \ref{InitialS} we will prove Theorem \ref{lx-I01} and give an example of ``good'' initial hypersurfaces in Theorem \ref{lx-I01}. In Section \ref{IG1} we prove a version of {\em a priori} interior gradient estimate and therefore the existence result of the MMCF with only continuous boundary data.

\section{MMCF and AMMCF for radial graphs in hyperbolic space}\label{GFlow}
Let $\Omega \subseteq {\mathbb{S}}^n_+$, and suppose that $\Sigma$ is a radial graph over $\Omega$ with position vector $X$ in $\mathbb{R}^{n+1}$. Then we can write
$$
X\,=\, e^{v(\bf{z})}\,{\bf{z}}\,,\,\quad {\bf{z}}\in \Omega\,,
$$
for a function $v$ defined over $\Omega$. We call such function $v$ the radial height of $\Sigma$.

\subsection{Gradient flow} As in \cite{DS09}, one can define the energy functional $\mathcal{I}(\Sigma)$ associated to $\Sigma$ :

\begin{align}
\mathcal{I}(\Sigma)\,=\, \mathcal{I}_{\Omega} (v)\, &=\,A_{\Omega}(v) + n\sigma V_{\Omega}(v)\notag\\
&=\,\int_\Omega \,\sqrt{1+|\nabla v|^2}\,y^{-n}\,d{\bf{z}}+ n\sigma\int_\Omega \,v({\bf{z}})\,y^{-(n+1)}\,d{\bf{z}}\,,
\end{align}
where $y=\mathbf{z}_{n+1}$ and $\nabla$ denotes the covariant derivative on the standard unit sphere. Note that in this energy functional $\mathcal{I}(\Sigma)$, the term $A_{\Omega}$ corresponds to the area of $\Sigma$ (under the hyperbolic metric) and the term $V_{\Omega}$ corresponds to the radial volume of the cone region between $\Sigma$ and the origin (up to a constant), see \cite{DS09} for details\,.

Then for a smooth solution $\mathbf{F}(\mathbf{z},t)$ to the MMCF \eqref{MMCF0}, which can be represented as a complete radial graph over $\Omega = {\mathbb{S}}^n_+$, namely,
$$\mathbf{F}(\mathbf{z},t)\,=\, \mathbf{X}(\mathbf{z},t)\,=\,e^{v(\mathbf{z},t)}\mathbf{z}\,, \quad (\mathbf{z},t) \in  {\mathbb{S}}^n_+ \times (0,\infty)\,,$$
we have
\begin{align}
&\frac{d}{dt}\,\mathcal{I}(\Sigma_t)\,=\,-n\,\int_{\Omega} (H-\sigma)^2\sqrt{1+|\nabla v|^2}\, y^{-n}d{\bf{z}}\notag\\
=\,&-n\,\int_{\Omega} \,\left\langle \partial \mathbf{F}/ \partial t\,,\,(H-\sigma)\nu_H \right\rangle_{H}\, dA\,=\,-n\,\int_{\Omega} (H-\sigma)^2 dA \,\leq \,0\,,\label{decrease}
\end{align}
where in the first equality we used the Stokes' theorem, equation \eqref{MMCF2} (see below) and the fact that (see equation (1.2) of \cite{DS09})
$$\text{div}_{\mathbf{z}}\left(\frac{y^{-n}\nabla v}{\sqrt{1+|\nabla v|^2}}\right)\,=\,n H y^{-(n+1)}\quad \text{in }\, \Omega\,,$$
and the second equality is just the first variation formula for $\mathcal{I}$\,.

From this point of view, one sees that the MMCF is the natural negative $L^2$-gradient flow of the energy functional $\mathcal{I}(\Sigma)$\,. We have:

\begin{lemma} \label{decrease1} Let $\mathbf{F}(\mathbf{z},t) = e^{v(\mathbf{z},t)}\mathbf{z}$ be a smooth radial graph solution to the AMMCF \eqref{AMMCF} in $\Omega \times [0,T]$. Then for all $t\in [0,T)$ we have
\begin{equation}
I(\Sigma^{\epsilon}_t) \,+\, n\int_0^t\int_{\Omega}\,(H-\sigma)^2 dAdt \,=\, I(\Sigma^{\epsilon}_0)\,.
\end{equation}
\end{lemma}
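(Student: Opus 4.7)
The plan is to reproduce the pointwise energy dissipation identity \eqref{decrease} on the bounded domain $\Omega_{\epsilon}$ for the AMMCF, and then integrate it in time from $0$ to $t$. The key observation is that the computation leading to \eqref{decrease} goes through verbatim on $\Omega_\epsilon$ (in place of $\mathbb{S}^n_+$), and in fact is cleaner here because the domain is compact and the boundary data is fixed.

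Concretely, writing $\mathcal{I}_{\Omega_\epsilon}(v) = A_{\Omega_\epsilon}(v) + n\sigma V_{\Omega_\epsilon}(v)$, I would differentiate under the integral sign to get
\begin{align*}
\frac{d}{dt}\mathcal{I}_{\Omega_\epsilon}(v)
&= \int_{\Omega_\epsilon} \left\langle \frac{y^{-n}\nabla v}{\sqrt{1+|\nabla v|^2}},\,\nabla v_t\right\rangle d\mathbf{z}
+ n\sigma \int_{\Omega_\epsilon} y^{-(n+1)} v_t\, d\mathbf{z}.
\end{align*}
Applying Stokes' theorem in the first term and using the divergence identity
\[
\text{div}_{\mathbf{z}}\!\left(\frac{y^{-n}\nabla v}{\sqrt{1+|\nabla v|^2}}\right) = nH\,y^{-(n+1)}
\]
quoted in the paper, one obtains
\[
\frac{d}{dt}\mathcal{I}_{\Omega_\epsilon}(v) = -\,n\int_{\Omega_\epsilon}(H-\sigma)\,y^{-(n+1)}\,v_t\,d\mathbf{z} \;+\; \text{(boundary term)}.
\]
The boundary term from Stokes' theorem vanishes because the Dirichlet condition in \eqref{AMMCF} forces $v(\cdot,t)\equiv v(\cdot,0)$ on $\partial\Omega_\epsilon$, so $v_t\equiv 0$ there. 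Finally, the parametrization identity $\partial_t \mathbf{F} = (H-\sigma)\nu_H$ combined with the fact that the normal component of the time-derivative of a radial graph is exactly $y^{-1}v_t/\sqrt{1+|\nabla v|^2}$ (times a factor that is absorbed by the area element when converting $y^{-(n+1)}v_t\,d\mathbf{z}$ to $(H-\sigma)\,dA$) yields
\[
\frac{d}{dt}\mathcal{I}(\Sigma^\epsilon_t) = -\,n\int_{\Omega_\epsilon}(H-\sigma)^2\,dA,
\]
exactly as in \eqref{decrease}.

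The conclusion follows by integrating this identity over $[0,t]$, using the assumed smoothness of the solution on $\Omega_\epsilon\times[0,T]$ to justify the manipulations. There is no substantive obstacle: unlike the MMCF on all of $\mathbb{S}^n_+$, where one must worry about decay at the asymptotic boundary, here $\Omega_\epsilon$ is compactly contained in $\mathbb{S}^n_+$ and $y\geq\epsilon>0$ on the graph, so every integral is finite and every integration by parts is justified by smoothness up to the boundary together with the fixed Dirichlet data.
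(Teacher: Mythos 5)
Your proof is correct and follows exactly the paper's approach: the paper derives the pointwise dissipation identity \eqref{decrease} by the same Stokes' theorem computation and then states Lemma \ref{decrease1} as its integrated consequence (the vanishing of the boundary term is what makes the argument clean on $\Omega_\epsilon$, as you note). The key substitution you describe informally is $v_t = yw(H-\sigma)$ from \eqref{MMCF2}, which converts $y^{-(n+1)}v_t\,d\mathbf{z}$ into $(H-\sigma)\,w\,y^{-n}\,d\mathbf{z} = (H-\sigma)\,dA$.
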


\begin{remark}
We point out that equation \eqref{decrease} is a natural analog of the well-known formula for the classic MCF:
$$\frac{d}{dt}\text{Area}(\Sigma_t)\,=\,-\int H^2 dA \,\leq\, 0\,.$$
\end{remark}

\subsection{The hyperbolic mean curvature} \label{lx-HS}
We will begin with fixing some notations, and collecting some relevant facts about the hyperbolic space $\mathbb{H}^{n+1}$. Where necessary, expressions in the Euclidean and hyperbolic spaces will,  be denoted by the subscript or superscript $E$ and $H$, respectively. Let $\nabla$ denote the covariant derivative on the standard unit sphere $\mathbb{S}^n$ in $\mathbb{R}^{n+1}$ and
$$y= {\mathbf{e}} \cdot {\bf{z}} \quad \text{for  } \,{\bf{z}}\in \mathbb{S}^n\subset \mathbb{R}^{n+1},$$
where, throughout this paper, $\mathbf{e}$ is the unit vector in the positive $x_{n+1}$ direction in $\mathbb{R}^{n+1}$, and `$\cdot$' denotes the Euclidean inner product in $\mathbb{R}^{n+1}$\,. Let $\tau_1,...,\tau_n$ be a local frame of smooth vector fields on the upper hemisphere ${\mathbb{S}^n_+}$. We denote by $\gamma_{ij}=\tau_i\cdot\tau_j$ the standard metric of ${\mathbb{S}^n_+}$ and $\gamma^{ij}$ its inverse. For a function $v$ on ${\mathbb{S}^n_+}$, we denote $v_i=\nabla_i v = \nabla_{\tau_i} v, v_{ij}=\nabla_j\nabla_i v$, etc.

Suppose that locally $\Sigma$ is a radial graph over $\Omega \subseteq S^n_+$. Then the Euclidean outward unit normal vector and mean curvature of $\Sigma$ are respectively
$$\nu_E\,=\,\frac{\mathbf{z}-\nabla v}{w}$$
and
$$H_E\,=\,\frac{a^{ij}v_{ij}-n}{ne^{v}w}\,,$$
where
$$a^{ij}=\gamma^{ij}-\frac{\gamma^{ik}v_kv_j}{w^2} \,,1\leq i,j\leq n\,\text{ and }\,w=(1+|\nabla v|^2)^{1/2}\,.$$
The hyperbolic outward unit normal vector is
$$ \nu_H\,=\, u\,\nu_E \,,$$
where $$u\,=\,\mathbf{e}\cdot \mathbf{X} = \mathbf{e}\cdot e^v\mathbf{z} = y\,e^v$$
is called the height function. Moreover, using the relation between the hyperbolic and Euclidean principle curvatures
$$ \kappa_i^{H}\,=\, \mathbf{e}\cdot \nu_E\,+\, u \,\kappa_i^{E}\,,\quad i=1,...,n\,,$$
we have (see equation (2.1) of \cite{GS00}, cf. equation (1.8) of \cite{GS08})
\begin{equation}
H\,=\,\mathbf{e}\cdot \nu_E\,+\, u \,H_E\,,
\end{equation}
which gives the hyperbolic mean curvature of $\Sigma$\,:
\begin{equation}\label{lx-HS1.0}
H\,=\,y\,e^v\,H_E+\frac{y-\mathbf{e}\cdot\nabla v
}{w}\,=\,\frac{y\,a^{ij}\,v_{ij}}{n\,w}-\frac{\mathbf{e}\cdot\nabla v}{w}\,,
\end{equation}
and therefore
\begin{equation}\label{E2}
a^{ij}v_{ij}\,=\, \frac{n}{y}(Hw+\mathbf{e}\cdot \nabla v)\,.
\end{equation}

\subsection{Degenerate parabolic equation} The first equation of the MMCF \eqref{MMCF} implies
\begin{equation}
\left\langle\frac{\partial}{\partial t}\mathbf{F},\nu_H \right\rangle_H \,=\, \left\langle\frac{\partial}{\partial t}(e^v\mathbf{z}),\nu_H \right\rangle_H\,=\,\frac{e^v}{uw}\frac{\partial v}{\partial t} \,=\frac{1}{yw}\frac{\partial v}{\partial t}\,=\,H-\sigma\,.
\end{equation}
Therefore by equation \eqref{lx-HS1.0} we have
\begin{equation}\label{MMCF2}
\frac{\partial v(\mathbf{z},t)}{\partial t}\,=\, yw(H-\sigma)\,=\, y^2\frac{a^{ij}v_{ij}}{n}-y\mathbf{e}\cdot\nabla v-\sigma yw\,.
\end{equation}
Suppose $\Gamma$ is the radial graph of a function $e^{\phi}$ over $\partial {\mathbb{S}}^n_+$, i.e., $\Gamma$ can be represented by
$$ X\,=\, e^{\phi(\mathbf{z})}\mathbf{z}\,,\quad \mathbf{z}\in \partial {\mathbb{S}}^n_+\,.$$
Then one observes that the Dirichlet problem for the MMCF \eqref{MMCF} is equivalent to the following (degenerate parabolic) Dirichlet problem (the MMCF for radial graphs):
\begin{equation}\label{MMCF1}
\left\{
\begin{aligned}\frac{\partial v(\mathbf{z},t)}{\partial t} &=\, y^2\frac{a^{ij}v_{ij}}{n}-y\mathbf{e}\cdot\nabla
v-\sigma yw \,, \quad (\mathbf{z},t)\in {\mathbb{S}}^n_+ \times (0,\infty)\,,\\
v(\mathbf{z},0) &= v_0(\mathbf{z})\,,\quad \mathbf{z} \in {\mathbb{S}}^n_+\,,\\
v(\mathbf{z},t) &= \phi(\mathbf{z})\,,\quad (\mathbf{z},t)\in \partial {\mathbb{S}}^n_+ \times [0,\infty)\,,
\end{aligned}
\right.
\end{equation}
where we represent $\Sigma_0$ as the radial graph of the function $e^{v_0}$ over $\mathbb{S}^n_+$ and $v_0\big|_{\partial {\mathbb{S}}^n_+} = \phi$\,.

\subsection{Approximate problem} Due to the degeneracy of equation \eqref{MMCF1} at infinity (i.e., $y=0$), we consider the corresponding approximate problem for a fixed $\epsilon>0$ sufficiently small. Namely, equivalently to \eqref{AMMCF}, we solve the following (non-degenerate parabolic) Dirichlet problem (the AMMCF for radial graphs):
\begin{equation}\label{AMMCF1}
\left\{
\begin{aligned}\frac{\partial v(\mathbf{z},t)}{\partial t} &=\, y^2\frac{a^{ij}v_{ij}}{n}-y\mathbf{e}\cdot\nabla
v-\sigma yw\,, \quad (\mathbf{z},t)\in \Omega_{\epsilon} \times (0,\infty)\,,\\
v(\mathbf{z},0) &= v^{\epsilon}_0(\mathbf{z})\,,\quad \mathbf{z} \in \Omega_{\epsilon}\,,\\
v(\mathbf{z},t) &= \phi^{\epsilon}(\mathbf{z})\,,\quad (\mathbf{z},t)\in \partial \Omega_{\epsilon} \times [0,\infty)\,,\\
\end{aligned}
\right.
\end{equation}
where we represent $\Sigma^{\epsilon}_0$ as the radial graph of the function $e^{v^{\epsilon}_0}$ over $\Omega_{\epsilon}$  and $v^{\epsilon}_0\big|_{\partial {\Omega_{\epsilon}}} = \phi^{\epsilon}$, and $\phi^{\epsilon}$ is a function defined on $\partial \Omega_{\epsilon} \subset {\mathbb{S}}^n_+$ such that $\Gamma_{\epsilon}$ can be represented as a radial graph of $e^{\phi^{\epsilon}}$ over $\partial \Omega_{\epsilon}$, i.e.,
\begin{equation}\label{PHI_1}
X\,=\, e^{\phi^{\epsilon}(\mathbf{z})}\mathbf{z}\,,\quad \mathbf{z}\in \partial \Omega_{\epsilon}\,.
\end{equation}
We denote the regular solution to \eqref{AMMCF1} by $v^{\epsilon}$\,.

\section{The short-time existence and equidistance spheres}\label{STES}
\subsection{Short-time existence} In the rest of the paper, we will focus on the case of $\sigma\in [0,1)$ and the case of $\sigma\in (-1,0)$ can be dealt with in the same way after using the hyperbolic reflection over $\mathbb{S}^n_+$. The standard parabolic PDE theory with Schauder estimates guarantees the short-time existence of a regular solution (up to the parabolic boundary) to the AMMCF \eqref{AMMCF1} with a $C^{\infty}$ initial hypersurface and compatible boundary data (i.e., $H=\sigma$ on $\partial \Sigma^{\epsilon}_0$). And for a $C^{\infty}$ initial hypersurface with incompatible boundary data, a solution exists at least for short time and becomes regular immediately after $t=0$ (cf. \cite{Ha75})\,. This is the statement of the next lemma.

\begin{lemma} \label{LE1} There exists $T_{\epsilon}^{\star}>0$ such that the AMMCF \eqref{AMMCF1} with initial data $v_0^{\epsilon} \in C^{\infty}(\overline{\Omega_{\epsilon}})$ has a solution $v^{\epsilon}\in C^{\infty}(\overline{\Omega_{\epsilon}}\times [0, T_{\epsilon}^{\star}))$ except on the corner $\partial \Omega_{\epsilon} \times \{t=0\}$\,.
\end{lemma}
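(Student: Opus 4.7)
The equation~\eqref{AMMCF1} is a quasilinear parabolic Dirichlet problem posed on the fixed compact spatial domain $\overline{\Omega_{\epsilon}}\subset \mathbb{S}^n_+$. Since $\overline{\Omega_{\epsilon}}$ stays at positive Euclidean distance from the equator $\partial \mathbb{S}^n_+$, the coordinate $y=\mathbf{e}\cdot\mathbf{z}$ is bounded below by a positive constant $c(\epsilon)>0$ on $\overline{\Omega_\epsilon}$, and the eigenvalues of $a^{ij}$ lie in $[1/w^{2},1]$. Hence on any set carrying an a priori $C^{1}$ bound on $v$, the principal coefficient $y^{2}a^{ij}/n$ is uniformly positive definite with smooth dependence on $\nabla v$. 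My plan is to run the standard parabolic Schauder plus fixed-point scheme for the compatible case, and then handle the incompatible case by an approximation argument in the spirit of Hamilton~\cite{Ha75}.

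In the compatible case $H\equiv\sigma$ on $\partial\Sigma_0^{\epsilon}$, I write $v=v_0^{\epsilon}+\phi$ and rewrite~\eqref{AMMCF1} as $\phi_{t}=L\phi+N(\phi,\nabla\phi,\nabla^{2}\phi)$, where $L$ is the linearization at $v_0^{\epsilon}$ and $N$ collects the remainder vanishing quadratically in $\phi$. The linear Dirichlet problem for $L$ with zero initial and lateral data is solvable in the parabolic H\"older space $C^{2+\alpha,1+\alpha/2}(\overline{\Omega_{\epsilon}}\times[0,T])$ by Ladyzhenskaya--Solonnikov--Uraltseva theory, and the compatibility assumption provides the corner conditions required by the Schauder estimate. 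A contraction mapping in a small ball of $C^{2+\alpha,1+\alpha/2}$ then produces a unique classical solution on some short interval $[0,T_{\epsilon}^{\star}]$, and a standard Schauder bootstrap---differentiate the equation in space and time and reapply the linear theory---upgrades it to $C^{\infty}(\overline{\Omega_{\epsilon}}\times[0,T_{\epsilon}^{\star}))$.

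In the incompatible case I approximate $v_0^{\epsilon}$ by a sequence $v_{0,k}^{\epsilon}\in C^{\infty}(\overline{\Omega_{\epsilon}})$ which agrees with $v_0^{\epsilon}$ outside a shrinking lateral layer around $\partial\Omega_{\epsilon}$, satisfies $v_{0,k}^{\epsilon}=\phi^{\epsilon}$ on $\partial\Omega_{\epsilon}$ together with all corner compatibility conditions up to order $k$, and converges uniformly to $v_0^{\epsilon}$. Each $v_{0,k}^{\epsilon}$ yields a compatible smooth solution $v_k^{\epsilon}$ on a common interval by the previous step, since the contraction estimate depends only on the $C^{1}$ norm of the initial data and on $c(\epsilon)$, both uniform in $k$. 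Interior and boundary Schauder estimates away from the corner $\partial\Omega_{\epsilon}\times\{0\}$, combined with the parabolic $C^{0}$ comparison principle to bound $\|v_k^{\epsilon}\|_{C^0}$, then give uniform control on all derivatives of $v_k^{\epsilon}$ on every compact subset of $(\overline{\Omega_{\epsilon}}\times[0,T_{\epsilon}^{\star}))\setminus(\partial\Omega_{\epsilon}\times\{0\})$, and a subsequential limit delivers the desired $v^{\epsilon}$.

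The main technical obstacle is the corner analysis: one must design the approximation so that the Schauder estimates remain uniform in $k$ and so that the singular set of the limit is confined exactly to $\partial\Omega_{\epsilon}\times\{0\}$. Modifying $v_0^{\epsilon}$ only in a thin lateral layer keeps the $C^{1}$ norm uniform in $k$, and the strict parabolicity of~\eqref{AMMCF1} supplies enough smoothing that $v^{\epsilon}(\cdot,t)$ is smooth up to $\partial\Omega_{\epsilon}$ for every $t>0$. Uniqueness within this regularity class is then a consequence of the $C^{0}$ comparison principle applied to~\eqref{AMMCF1}.
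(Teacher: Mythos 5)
Your proposal takes essentially the same route as the paper. The paper offers no written proof for this lemma beyond the preceding paragraph, which asserts short-time existence from ``standard parabolic PDE theory with Schauder estimates'' in the compatible case and, for incompatible corner data, defers to Hamilton's approximation technique via the citation of \cite{Ha75}; your contraction-mapping scheme in $C^{2+\alpha,1+\alpha/2}$ with a Schauder bootstrap, followed by a thin-layer mollification of the initial data and a diagonal limit using interior/lateral-boundary Schauder estimates off the corner $\partial\Omega_\epsilon\times\{0\}$, is a reasonable unpacking of precisely that citation. One point worth tightening if you ever write this out in full: the claim that the contraction time depends only on the $C^{1}$ norm of the initial data is not automatic from the abstract fixed-point argument (which ostensibly sees the $C^{2+\alpha}$ norm); for this specific equation one should justify the uniform lifespan either via the a priori gradient bound of Lemma \ref{lx-lt1} together with a continuation argument, or by appealing to the fact that the coefficients $a^{ij}$ depend only on $\nabla v$ so that a gradient bound yields uniform parabolicity on the flow region. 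This is exactly the sort of detail the paper leaves to \cite{Ha75} and \cite{L96}.
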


For less regular (e.g. $C^{1+1}$) initial and boundary data, the short-time existence lemma will remain true (see e.g. \cite[theorem 8.2]{L96} and \cite[theorem 4.2, P.559]{LSU68})\,.

\begin{lemma} \label{LE2} There exists $T_{\epsilon}^{\star}>0$ such that the AMMCF \eqref{AMMCF1} with initial data $v_0^{\epsilon} \in C^{1+1}(\overline{\Omega_{\epsilon}})$ has a solution $v^{\epsilon}\in C^{\infty}(\Omega_{\epsilon}\times (0, T_{\epsilon}^{\star})) \cap C^0(\overline{\Omega_{\epsilon}}\times [0, T_{\epsilon}^{\star}))$\,.
\end{lemma}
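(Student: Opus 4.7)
The plan is to reduce the problem to the smooth-initial-data case handled by Lemma \ref{LE1} via an approximation argument. Mollify to obtain a sequence $v_0^{\epsilon,k}\in C^{\infty}(\overline{\Omega_{\epsilon}})$ with $v_0^{\epsilon,k}\big|_{\partial\Omega_{\epsilon}}=\phi^{\epsilon}$, converging to $v_0^{\epsilon}$ in $C^{1+\alpha}(\overline{\Omega_{\epsilon}})$ for some $\alpha\in(0,1)$, and uniformly bounded in $C^{1+1}(\overline{\Omega_{\epsilon}})$. Lemma \ref{LE1} produces smooth solutions $v^{\epsilon,k}$ on short intervals $[0,T_k^{\star})$; the task is to show that the $T_k^{\star}$ admit a uniform positive lower bound $T^{\star}$ together with uniform estimates on $\overline{\Omega_{\epsilon}}\times[0,T^{\star}]$.

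The equation in \eqref{AMMCF1} is quasilinear and uniformly parabolic on $\Omega_{\epsilon}$: since $\Omega_{\epsilon}$ is compactly contained in $\mathbb{S}^{n}_{+}$, one has $y\geq c_{\epsilon}>0$ on $\Omega_{\epsilon}$, while $a^{ij}(\nabla v)$ is uniformly elliptic on the set $\{|\nabla v|\leq M\}$. So the needed a priori estimates are: (i) a uniform $C^{0}$ bound, obtained by comparison with the equidistance-sphere barriers discussed in Section \ref{STES}; (ii) a uniform Lipschitz bound on a short time interval $[0,T^{\star}]$ inherited from the uniform $C^{1+1}$ bound on $v_{0}^{\epsilon,k}$, via the maximum principle applied to an auxiliary quantity built from $|\nabla v^{\epsilon,k}|$ together with boundary barriers at $\partial\Omega_{\epsilon}$; and (iii) interior H\"older estimates on $\nabla v^{\epsilon,k}$ via Krylov--Safonov, bootstrapped by Schauder to uniform interior $C^{\infty}$ estimates on compact subsets of $\Omega_{\epsilon}\times(0,T^{\star}]$.

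Given (i)--(iii), Arzel\`a--Ascoli produces a subsequential limit $v^{\epsilon}$ which is smooth on $\Omega_{\epsilon}\times(0,T^{\star})$ and solves \eqref{AMMCF1} classically there. Continuity of $v^{\epsilon}$ up to the parabolic boundary of $\Omega_{\epsilon}\times[0,T^{\star}]$ follows from equicontinuity of the family $\{v^{\epsilon,k}\}$, which in turn comes from uniform barrier estimates along $\partial\Omega_{\epsilon}\times[0,T^{\star}]$ together with the uniform convergence $v_{0}^{\epsilon,k}\to v_{0}^{\epsilon}$ at $t=0$.

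The main obstacle is the corner $\partial\Omega_{\epsilon}\times\{0\}$: since the initial data need not satisfy the compatibility condition $H=\sigma$ on $\Gamma_{\epsilon}$, one cannot expect any regularity beyond $C^{0}$ across the corner, and the classical parabolic Schauder theory must be replaced by the more delicate framework for equations with incompatible corner data. This is precisely the setting of the cited references; in practice the cleanest route is to verify that \eqref{AMMCF1} fits the hypotheses of Lieberman \cite[Thm.~8.2]{L96} or LSU \cite[Thm.~4.2, p.~559]{LSU68} --- namely, a quasilinear uniformly parabolic equation on a smooth domain with smooth structure coefficients, $C^{1+1}$ initial data, and compatible $C^{1+1}$ Dirichlet boundary data --- and invoke their conclusion directly.
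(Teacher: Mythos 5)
Your final paragraph is, in fact, the paper's entire proof: the authors simply cite Lieberman \cite[Thm.~8.2]{L96} and LSU \cite[Thm.~4.2, p.~559]{LSU68} and move on, noting elsewhere (Lemma~\ref{LE1}) that a lack of first-order compatibility at the corner $\partial\Omega_\epsilon\times\{0\}$ only costs regularity there, which is consistent with the $C^0(\overline{\Omega_\epsilon}\times[0,T_\epsilon^\star))$ conclusion. The mollification-plus-limit route you put forward as your primary plan is logically sound but heavier than needed, and it has one dependency worth naming: the uniform lower bound on $T_k^\star$ is not a triviality --- it requires a gradient bound uniform in $k$, and in this paper that is exactly the content of Lemma~\ref{lx-lt1} (proved in Section~\ref{lx-lt}, \emph{after} this short-time lemma). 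That is not circular, since Lemma~\ref{lx-lt1} is an a priori estimate valid for any sufficiently smooth solution and so applies to your mollified $v^{\epsilon,k}$, but it does mean your approximation argument implicitly invokes a nontrivial computation from later in the paper rather than something elementary. Also, your final sentence slips in the phrase ``compatible $C^{1+1}$ Dirichlet boundary data'': only zero-order compatibility ($v_0^\epsilon|_{\partial\Omega_\epsilon}=\phi^\epsilon$) is available here, and the whole point of the $C^0$-up-to-the-corner formulation is that the first-order compatibility $H=\sigma$ on $\Gamma_\epsilon$ is \emph{not} assumed; the cited theorems accommodate this.
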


Moreover, as we shall see, the passage to the limit of $\{v^{\epsilon}\}$ as $\epsilon\to 0$ to get the long-time existence of the MMCF \eqref{MMCF1} is based on a series of estimates uniform in $\epsilon$.

\subsection{Equidistance spheres}\label{lx-es}
In the following, let $T_{\epsilon}$ (possibly $\infty$) be the maximal time up to which the AMMCF \eqref{AMMCF} for radial graphs or equivalently the solution to \eqref{AMMCF1} exists, and let $V_{\epsilon}=\cup_{0\leq t\leq T_{\epsilon}}\Sigma^{\epsilon}_t$ denote the flow region in $\mathbb{H}^{n+1},$ where
$\Sigma^{\epsilon}_t = \mathbf{F}(\Omega_{\epsilon}, t)$ is the hypersurface moving by the AMMCF \eqref{AMMCF} at time $t$.

Our estimates in the proof of the main theorems are all based on the following fact: let $B_1=B_R(a)$
be a ball of radius $R$ centered at $a=(a',-\sigma R)\in \mathbb{R}^{n+1}$
where $a'\in \mathbb{R}^n$ and $\sigma \in (-1,1)$. Then $S_1=\partial B_1\cap \mathbb{H}^{n+1}$  has constant hyperbolic mean curvature $\sigma$ with respect to its outward normal. Similarly, let $B_2=B_R(b)$ be a ball of radius
 $R$ centered at $b=(b',\sigma R)\in \mathbb{R}^{n+1}$, then $S_2=\partial B_2\cap\mathbb{H}^{n+1}$ has constant hyperbolic mean curvature $\sigma$ with respect to its inward normal.
 These so called equidistance spheres will serve as good barriers in many situations (see Lemma \ref{C0Bound} below). Let $D \subset \{x_{n+1} =0\}$ be the domain enclosed by $\Gamma$ and $D_{\epsilon} \subset \{x_{n+1} = \epsilon\}$ be the domain enclosed by $\Gamma_{\epsilon}$.

 \begin{lemma}\label{C0Bound}
 \label{lx-es1} Let $B_1$ and $B_2$ be balls in $\mathbb{R}^{n+1}$ of
 radius $R$ centered at $a=(a',-\sigma R)$ and $b=(b',\sigma R),$
 respectively.
 \begin{itemize}
\item[(i)] If $\Sigma_0^\epsilon\subset B_1$, then $V_{\epsilon}\subset B_1\,$ (see Figure \ref{PIC3})\,;
\item[(ii)] If $B_1 \cap \{x_{n+1} = \epsilon\} \subset D_{\epsilon}$ and $B_1 \cap \Sigma_0^\epsilon = \emptyset$, then $B_1 \cap V_{\epsilon} = \emptyset$\,;
 \item[(iii)] If $B_2\cap D_\epsilon = \emptyset$ and $B_2 \cap \Sigma_0^\epsilon = \emptyset$, then $B_2\cap V_{\epsilon} = \emptyset\,.$
\end{itemize}
\end{lemma}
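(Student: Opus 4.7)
All three assertions are instances of the avoidance principle for the MMCF. Since, as noted in the paragraph preceding the statement, each of $S_1$ and $S_2$ has hyperbolic mean curvature exactly $\sigma$ with respect to its specified normal, both are stationary solutions of the flow $\partial_t \mathbf{F}=(H-\sigma)\nu_H$. The plan in each case is to compare $\Sigma^{\epsilon}_{t}$ with the barrier sphere, reduce the comparison to a scalar parabolic inequality between local radial heights, and invoke the strong parabolic maximum principle.

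For (i), I argue by contradiction. Assume $V_{\epsilon}\not\subset B_1$ and let $t_0$ be the first contact time of $\Sigma^{\epsilon}_t$ with $S_1=\partial B_1\cap\mathbb{H}^{n+1}$. The Dirichlet condition keeps $\Gamma_{\epsilon}$ fixed in the hyperplane $\{x_{n+1}=\epsilon\}$, which meets $\partial B_1$ only in a codimension-two Euclidean sphere, so any first contact point $p_0$ must be interior to $\Sigma^{\epsilon}_{t_0}$. In a small cone about the ray from the origin through $p_0$, both $\Sigma^{\epsilon}_{t_0}$ and the branch of $S_1$ through $p_0$ are radial graphs over a common neighborhood $U\subset\mathbb{S}^n_+$ with radial heights $v^{\epsilon}(\mathbf{z},t)$ and $\psi_1(\mathbf{z})$; internal tangency gives a one-sided bound $v^{\epsilon}\le\psi_1$ on $U\times[0,t_0]$ with equality at $(\mathbf{z}_0,t_0)$. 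Both heights solve the quasilinear equation in \eqref{AMMCF1} (with $\partial_t\psi_1=0$), so subtracting the equations and applying the mean value theorem to the coefficients produces a linear uniformly parabolic equation
\[
\partial_t w \,=\, A^{ij}(\mathbf{z},t)\,w_{ij}\,+\,B^{i}(\mathbf{z},t)\,w_i\,+\,C(\mathbf{z},t)\,w
\]
for $w:=\psi_1-v^{\epsilon}\ge 0$ on $U\times[0,t_0]$, with bounded coefficients and an interior zero at $(\mathbf{z}_0,t_0)$. The strong parabolic maximum principle then forces $w\equiv 0$ on the parabolic component of $(\mathbf{z}_0,t_0)$, which propagates back to $t=0$ and contradicts the strict form of $\Sigma^{\epsilon}_0\subset B_1$ (obtained, if needed, by a preliminary shrinking $R\mapsto R-\delta$ and a limit $\delta\to 0$).

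Cases (ii) and (iii) are entirely analogous, with the barrier sphere on the opposite side. In (ii), the hypothesis $B_1\cap\{x_{n+1}=\epsilon\}\subset D_{\epsilon}$ together with $B_1\cap\Sigma^{\epsilon}_0=\emptyset$ forces $\Sigma^{\epsilon}_0$ to enclose $B_1$ as a radial graph, giving $v^{\epsilon}\ge\psi_1$ locally, and a first contact from outside is ruled out by the same argument applied to $v^{\epsilon}-\psi_1$. For (iii), the center of $B_2$ lies above the base plane so $S_2$ has two radial-graph branches near a putative touching point; we compare $v^{\epsilon}$ with the origin-side branch of $S_2$ and repeat. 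In each case the hypothesis that $B_j$ is disjoint from $\Sigma^{\epsilon}_0$ together with the geometric configuration of $B_j$ relative to $D_{\epsilon}$ already excludes any first contact on the fixed boundary $\Gamma_{\epsilon}$.

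The main technical point is verifying that near a putative first-touching $p_0$ the barrier $S_j$ is genuinely a radial graph, equivalently that the radial direction through $p_0$ is transverse to $T_{p_0}S_j$. This may fail only on a codimension-one subset of $S_j$, and on that exceptional set one instead writes both surfaces as Euclidean graphs over the common tangent plane at $p_0$ and derives the very same linear parabolic comparison; the strong maximum principle then closes the argument in the same way.
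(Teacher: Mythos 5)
Your overall strategy---avoidance via the equidistance-sphere barrier together with the maximum principle---is the same as the paper's, and you correctly identify the key fact that $S_1$, $S_2$ are stationary solutions of the flow since they have hyperbolic mean curvature exactly $\sigma$ with respect to the appropriate normal. Where you diverge is in the mechanism: you keep the ball fixed and run a first-contact-in-time argument, while the paper first applies a homothetic dilation centered at $(a',0)$ (resp.\ $(b',0)$) to expand $B_1$ (resp.\ shrink $B_1,B_2$) so as to strictly separate the barrier from $\Sigma_0^\epsilon$, and then slides back, invoking the maximum principle during the reverse process. The crucial technical observation that makes this work, and which your proof does not use, is that a dilation centered at a point of $\{x_{n+1}=0\}$ is a hyperbolic isometry carrying $B_R(a',-\sigma R)$ to $B_{\lambda R}(a',-\lambda\sigma R)$, which is again an equidistance sphere of the \emph{same} hyperbolic mean curvature $\sigma$. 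This yields a continuous one-parameter family of static CMC-$\sigma$ barriers of all radii, which is exactly what is needed when the initial containment is non-strict (as it is in the application to Lemma \ref{lx-es2}, where $\Gamma_\epsilon$ may lie on $\partial B$).

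Your suggested patch for the non-strict case---``a preliminary shrinking $R\mapsto R-\delta$ and a limit $\delta\to 0$''---is where the argument breaks. Shrinking $R$ runs the wrong way: if $\Sigma_0^\epsilon\subset B_R$ only non-strictly then certainly $\Sigma_0^\epsilon\not\subset B_{R-\delta}$. And if the intent was to \emph{enlarge} to $B_{R+\delta}(a',-\sigma R)$, leaving the center fixed, the enlarged ball is no longer an equidistance sphere of mean curvature $\sigma$: its hyperbolic mean curvature is $\sigma R/(R+\delta)<\sigma$, so it is not a static solution of \eqref{MMCF2} and the comparison at a first-contact point no longer closes. The correct replacement is the paper's dilation $B_R(a',-\sigma R)\mapsto B_{R'}(a',-\sigma R')$ with $R'>R$: run your first-contact argument against $B_{R'}$, then let $R'\downarrow R$. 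Apart from this, your reduction to local radial graphs, the differencing of the quasilinear equations, the strong parabolic maximum principle, and the observation that tangency at $p_0$ automatically makes the barrier a local radial graph there (so your final paragraph's ``exceptional set'' never actually arises) are all sound.
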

\input{P3.TpX}
\begin{proof}This lemma follows from the maximum principle by performing
homothetic dilations (hyperbolic isometries) from $(a',0)$ and
$(b',0)$, respectively. For (i), we expand $B_1$ continuously until it contains $\Sigma_0^\epsilon$; for (ii) and (iii) we shrink $B_1$ and $B_2$ until they are respectively inside and outside $\Sigma_0^\epsilon$. We note that $\Sigma^{\epsilon}_t$ satisfies equation \eqref{MMCF2} as a radial graph and its mean curvature is calculated with respect to its outward normal direction. Also $S_1, S_2$ have constant mean curvature $\sigma$ with respect to the outward and inward normal respectively, and locally as radial graphs they both satisfy equation \eqref{MMCF2} (statically) too. Then from the maximum principle we see that $\Sigma^{\epsilon}_t$ cannot touch $B_1$ or $B_2$ when we reverse this process.
\end{proof}

Similarly, for the stationary case we have
 \begin{lemma}\cite[lemma 3.1]{GS00}\label{C0Bound5} Let $B_1$ and $B_2$ be balls in $\mathbb{R}^{n+1}$ of
 radius $R$ centered at $a=(a',-\sigma R)$ and $b=(b',\sigma R),$ respectively. Suppose $\Sigma$ has constant hyperbolic mean curvature $\sigma$. Then
 \begin{itemize}
\item[(i)] If $\partial \Sigma\subset B_1$, then $\Sigma\subset B_1\,;$
\item[(ii)] If $B_1 \cap \{x_{n+1} = \epsilon\} \subset D_{\epsilon}$, then $B_1 \cap \Sigma = \emptyset$\,;
 \item[(iii)] If $B_2\cap D_\epsilon = \emptyset$, then $B_2\cap \Sigma = \emptyset\,.$
\end{itemize}
\end{lemma}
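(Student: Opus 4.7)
The plan is to mimic the dilation-plus-maximum-principle argument used for Lemma \ref{C0Bound}, but in the elliptic (stationary) setting. Wherever $\Sigma$ is locally a radial graph with height function $v$, the condition $H\equiv\sigma$ is exactly the stationary version of \eqref{MMCF2}, namely
\begin{equation*}
y^{2}\frac{a^{ij}v_{ij}}{n}-y\,\mathbf{e}\cdot\nabla v-\sigma\,y\,w\,=\,0,
\end{equation*}
which is a quasilinear elliptic equation in $v$. The equidistance spheres $S_1$ and $S_2$ also satisfy this same equation locally as radial graphs, with the outward/inward normal orientations prescribed in Section \ref{lx-es}. Hence around any interior tangential contact point between $\Sigma$ and an equidistance sphere (with matching normals) the difference of the two height functions satisfies a linear uniformly elliptic equation (the ellipticity is uniform on any neighborhood on which the gradients are bounded, which is automatic at an interior contact of two smooth surfaces), and the strong maximum principle / Hopf lemma applies.

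For part (i), consider the one-parameter family $\{B_1^{\lambda}\}_{\lambda\geq 1}$ obtained by homothetic dilation of $B_1$ centered at $(a',0)\in\partial_{\infty}\mathbb{H}^{n+1}$; explicitly $B_1^{\lambda}=B_{\lambda R}\!\left((a',-\sigma\lambda R)\right)$. Each such dilation is a hyperbolic isometry, so every $\partial B_1^{\lambda}\cap\mathbb{H}^{n+1}$ is again an equidistance sphere of hyperbolic mean curvature $\sigma$. For $\lambda$ large enough, $\overline{\Sigma}\subset B_1^{\lambda}$, and we let $\lambda_0=\inf\{\lambda\geq 1:\overline{\Sigma}\subset B_1^{\lambda}\}$. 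If $\lambda_0>1$, then since $\partial\Sigma\subset B_1\Subset B_1^{\lambda_0}$, the failure of containment at $\lambda_0$ must come from an interior tangency of $\Sigma$ with $\partial B_1^{\lambda_0}$, with matching outward normals. The maximum principle then forces $\Sigma$ to coincide with $\partial B_1^{\lambda_0}\cap\mathbb{H}^{n+1}$ near this point, and a standard connectedness/continuation argument extends this globally, contradicting $\partial\Sigma\subset B_1\subsetneq B_1^{\lambda_0}$. Hence $\lambda_0=1$ and $\Sigma\subset B_1$.

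Parts (ii) and (iii) are handled symmetrically by starting from a sufficiently shrunken (or far-away) dilate of $B_1$, resp.\ $B_2$, which by hypothesis is disjoint from $\Sigma$, and then expanding continuously to the final position. The hypothesis $B_1\cap\{x_{n+1}=\epsilon\}\subset D_{\epsilon}$ (resp.\ $B_2\cap D_{\epsilon}=\emptyset$) ensures that along the entire one-parameter family the sphere stays disjoint from $\partial \Sigma=\Gamma$ asymptotically, so any first contact must occur at an interior point of $\Sigma$, where the strong maximum principle again yields the same contradiction.

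The main technical point to verify is standard but worth stating cleanly: at an interior tangential contact between two surfaces with the same constant hyperbolic mean curvature $\sigma$ and the same oriented unit normal, one chooses compatible local radial-graph parametrizations, notes that the difference of the two height functions satisfies a homogeneous linear elliptic PDE with bounded coefficients, and invokes the Hopf boundary point lemma (for containment from one side) together with the usual interior strong maximum principle. The fact that the equidistance spheres $S_1$, $S_2$ can be represented locally as radial graphs at every point of tangency (as already observed in the construction of the uniform local ball condition) is what makes this comparison possible.
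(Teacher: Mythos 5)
Your proposal reconstructs exactly the dilation-plus-maximum-principle argument that the paper itself uses for the parabolic analog (Lemma~\ref{C0Bound}) and that Guan--Spruck use to prove this elliptic statement in \cite[lemma~3.1]{GS00}; the paper cites that reference rather than reprove it. The key steps you identify — that the equidistance spheres $S_1, S_2$ satisfy the stationary version of \eqref{MMCF2}, that homothetic dilations from $(a',0)$ (resp.\ $(b',0)$) are hyperbolic isometries preserving the family of equidistance spheres, and that an interior tangency with matching normals is excluded by writing both surfaces locally as radial graphs and applying the strong maximum principle to the difference — are the same ingredients used there, so this is essentially the paper's approach.
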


\section{Global gradient bounds and long time existence of the AMMCF} \label{lx-lt}

Before we begin our proof, we would like to collect some important formulas that were first derived in \cite{GS00}. From now on, we assume the local vector fields $\tau_1,...,\tau_n$ to be orthonormal on ${\mathbb{S}}^n_+$ so that $\gamma_{ij}=\delta_{ij}$ and thus $a^{ij} = \delta_{ij} - \frac{v_iv_j}{w^2}$\,. The covariant derivatives of $y$ are
\begin{equation}\label{E4}
y_i=\nabla_iy=(\mathbf{e} \cdot \mathbf{z})_i=\mathbf{e}\cdot\tau_i,
\end{equation}
\[y_{ij}=\nabla_i\nabla_jy=\mathbf{e}\cdot\nabla_i\nabla_j\mathbf{z}=\mathbf{e}\cdot\nabla_i\tau_j=-y\delta_{ij}.\]
Therefore
$$
\mathbf{e}\cdot\nabla y=\sum (\mathbf{e}\cdot\tau_i)^2=1-y^2,
$$
$$
\nabla v\cdot\nabla y=\mathbf{e}\cdot\nabla v\,\quad\text{and}\quad \nabla w\cdot\nabla y=\mathbf{e}\cdot\nabla w\,.
$$
Note that we also have the identities
$$
a^{ij}v_i=\frac{v_j}{w^2},\quad a^{ij}v_iv_j=1-\frac{1}{w^2},\quad \sum a^{ii}=n-1+\frac{1}{w^2}\,.
$$
Moreover,
\begin{equation}\label{lx-lt4.0}
w_i=\frac{v_kv_{ki}}{w},\quad w_{ij}=\frac{v_kv_{kij}}{w}+\frac{1}{w}a^{kl}v_{ki}v_{lj}\quad \text{and}\quad (\nabla_k a^{ij})v_{ij}=-\frac{2}{w}a^{ij}w_iv_{kj}\,.
\end{equation}
Straight forward calculations also show that
\begin{align}
(\mathbf{e}\cdot\nabla v)_i &= (\mathbf{e}\cdot\tau_kv_k)_i=\mathbf{e}\cdot\tau_kv_{ki}-yv_i=y_kv_{ki}-yv_i,\notag\\
(\mathbf{e}\cdot\nabla v)_{ij} &= \mathbf{e}\cdot\tau_k v_{kij}-2yv_{ij}-\mathbf{e}\cdot\tau_jv_i=y_kv_{kij}-2yv_{ij}-y_jv_i\notag
\end{align}
and
\begin{equation}\label{E1}
\nabla v\cdot\nabla(\mathbf{e}\cdot\nabla v) = v_i(\mathbf{e}\cdot\tau_kv_{ki}-yv_i)=w\mathbf{e}\cdot\nabla w-y(w^2-1)\,.
\end{equation}
We also have the formula for commuting the covariant derivatives
\begin{equation}\label{lx-lt5.0}
v_{ijk}=v_{kij}+v_j\delta_{ik}-v_k\delta_{ij}\,.
\end{equation}

Now we are ready to state our first main technical lemma.

\begin{lemma}\label{lx-lt1}
Let $v \in C^{3,\frac{3}{2}}(\Omega \times (0,T))$ be a function satisfying equation \eqref{MMCF2} for some $T>0$ and $\Omega\subseteq \mathbb{S}^{n}_+$\,. Then
\begin{equation}
(\frac{\partial}{\partial
t} - L ) w\,\leq\,-\sigma(\mathbf{e}\cdot\nabla
v)+\frac{y^2(w^2-1)}{nw}-H^2w\,\leq\, 2w \quad \text{in }\, \Omega\times (0,T)\,,
\end{equation}
 where $L$ is the linear elliptic operator
 $$L\equiv \frac{y^2}{n}\left(a^{ij}\nabla_{ij}-\frac{2}{w}a^{ij}w_i\nabla_j-\frac{n}{wy}(\sigma\nabla
v+w\mathbf{e})\cdot\nabla\right)\,.$$
\end{lemma}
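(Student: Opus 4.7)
\emph{Setup.} Starting from $w^{2}=1+|\nabla v|^{2}$, differentiation gives the basic identity $ww_{t}=v_{k}(v_{t})_{k}$, so the whole computation reduces to evaluating $v_{k}(v_{t})_{k}/w$ and recognising the operator $L$ applied to $w$. The strategy is to take one covariant derivative of equation \eqref{MMCF2}, contract with $v_{k}/w$, and then use the algebraic identities collected at the beginning of the section to rewrite everything in terms of $a^{ij}w_{ij}$, $a^{ij}w_{i}w_{j}$, and the first-order drift terms appearing in $L$.

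\emph{Main computation.} Differentiating \eqref{MMCF2} in $\tau_{k}$ produces the term $\frac{y^{2}}{n}(a^{ij}v_{ij})_{k}$. Using the third identity in \eqref{lx-lt4.0} I replace $(\nabla_{k}a^{ij})v_{ij}$ by $-\frac{2}{w}a^{ij}w_{i}v_{kj}$, and using the commutator \eqref{lx-lt5.0} I replace $v_{ijk}$ by $v_{kij}+v_{j}\delta_{ik}-v_{k}\delta_{ij}$. After contracting with $v_{k}/w$, the identity $v_{k}v_{kj}=ww_{j}$ turns the first piece into $-2a^{ij}w_{i}w_{j}/w\cdot(y^{2}/n)$, and the second formula in \eqref{lx-lt4.0} converts $v_{k}v_{kij}/w$ into $w_{ij}-a^{kl}v_{ki}v_{lj}/w$. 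The lower-order terms $-y\mathbf{e}\cdot\nabla v$ and $-\sigma yw$ in \eqref{MMCF2}, when differentiated and contracted with $v_{k}/w$, produce (via \eqref{E1} and $y_{k}v_{k}=\mathbf{e}\cdot\nabla v$) exactly the drift piece $-\frac{y}{w}(\sigma\nabla v+w\mathbf{e})\cdot\nabla w$ in $L$, up to residues that account for the $-\sigma(\mathbf{e}\cdot\nabla v)$ term and the $y^{2}(w^{2}-1)/(nw)$ term (arising from $-y(w^{2}-1)$ in \eqref{E1} and from the $-v_{k}(n-1)$ contribution of $a^{ij}(v_{j}\delta_{ik}-v_{k}\delta_{ij})$ combined with $v_{k}v_{k}=w^{2}-1$ and $\sum a^{ii}=n-1+1/w^{2}$).

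\emph{Producing the $-H^{2}w$ term.} The only residue that does not immediately absorb into $L$ or into the two explicit error terms is $-\frac{y^{2}}{nw^{2}}\,a^{ij}a^{kl}v_{ki}v_{lj}$. Here I apply the Cauchy--Schwarz inequality for positive-definite matrices in the form $(\operatorname{tr}(AV))^{2}\le n\operatorname{tr}(AVAV)$ with $A=(a^{ij})$ and $V=(v_{ij})$, which gives
\begin{equation*}
a^{ij}a^{kl}v_{ki}v_{lj}\;\ge\;\frac{1}{n}\,(a^{ij}v_{ij})^{2}\;=\;\frac{n}{y^{2}}\,(Hw+\mathbf{e}\cdot\nabla v)^{2},
\end{equation*}
by \eqref{E2}. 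Discarding the nonnegative cross term $2(Hw)(\mathbf{e}\cdot\nabla v)/w^{2}$ and $(\mathbf{e}\cdot\nabla v)^{2}/w^{2}$ (or combining them with the already-present error $(\mathbf{e}\cdot\nabla v)$-contributions, which match up cleanly), one extracts exactly $-H^{2}w$. This Cauchy--Schwarz reduction is the main structural step and is the one place where nontrivial analysis (rather than pure bookkeeping) enters.

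\emph{Final bound.} Once the inequality $(\partial_{t}-L)w\le -\sigma(\mathbf{e}\cdot\nabla v)+\frac{y^{2}(w^{2}-1)}{nw}-H^{2}w$ is established, the bound by $2w$ is immediate: $-H^{2}w\le 0$; on $\mathbb{S}^{n}_{+}$ we have $0\le y\le 1$, so $\frac{y^{2}(w^{2}-1)}{nw}\le\frac{w}{n}$; and $|\sigma|<1$ together with $|\mathbf{e}\cdot\nabla v|\le|\nabla v|\le w$ yields $|\sigma(\mathbf{e}\cdot\nabla v)|\le w$. Hence the right-hand side is at most $w+w/n\le 2w$ for all $n\ge 1$. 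The main obstacle is not conceptual but combinatorial: carefully tracking the cancellations between the commutator terms, the $a^{ij}v_{j}$ identities, and the $(\mathbf{e}\cdot\nabla v)$-contributions so that exactly the drift $-\frac{n}{wy}(\sigma\nabla v+w\mathbf{e})\cdot\nabla$ of $L$ emerges and only the three claimed error terms survive.
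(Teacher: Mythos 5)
Your plan follows the same route as the paper: compute $w_t=\tfrac{1}{w}\nabla v\cdot\nabla(v_t)$, differentiate \eqref{E2} in $\tau_k$, use the identities \eqref{lx-lt4.0}--\eqref{E1} to absorb the second-order and drift pieces into $Lw$, and apply the Cauchy--Schwarz inequality $a^{ij}a^{kl}v_{ki}v_{lj}\geq\tfrac{1}{n}(a^{ij}v_{ij})^2$ to produce the $-H^2w$ term; the final crude bound by $2w$ is also handled identically.

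One caveat: you cannot simply \emph{discard} the cross term $2H(\mathbf{e}\cdot\nabla v)$ arising from $-\tfrac{1}{w}(Hw+\mathbf{e}\cdot\nabla v)^2$, since $H(\mathbf{e}\cdot\nabla v)$ has no definite sign; that would not preserve the direction of the inequality. What actually happens — and what you correctly note as the alternative — is an exact cancellation: the term $\mathbf{e}\cdot\nabla v(2H-\sigma)$ that accumulates from the other pieces contributes $+2H(\mathbf{e}\cdot\nabla v)$, which kills the cross term, and the leftover $+\tfrac{(\mathbf{e}\cdot\nabla v)^2}{w}$ likewise cancels against $-\tfrac{(\mathbf{e}\cdot\nabla v)^2}{w}$. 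This cancellation is forced and is the reason the three claimed residues are exactly $-\sigma(\mathbf{e}\cdot\nabla v)$, $\tfrac{y^2(w^2-1)}{nw}$, and $-H^2w$; your plan should present it as a cancellation, not a discard.
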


\begin{proof}
By equation \eqref{MMCF2} we have
\begin{align*}\frac{\partial}{\partial t}w&=\,\frac{1}{w}\nabla v \cdot \nabla (v_t)\,=\,\frac{\nabla v}{w} \cdot \nabla(y w(H-\sigma))\\
&=\,\frac{\nabla v}{w} \cdot (\nabla y w(H-\sigma)+y\nabla w(H-\sigma)+y
w\nabla H)\\
&=\,\mathbf{e}\cdot \nabla v(H-\sigma)+\frac{y(H-\sigma)}{w}\nabla
v\cdot\nabla w+y\nabla v\cdot\nabla H
\end{align*}

\noindent Differentiating both sides of the equation \eqref{E2} with respect to $\tau_k$ gives (using also the equation \eqref{lx-lt4.0})
\begin{align*}
(\nabla_k a^{ij})v_{ij}+a^{ij}v_{ijk}= &\,a^{ij}v_{ijk}-\frac{2}{w}a^{ij}w_iv_{kj}\\
=&\,\frac{n}{y}(H_kw+Hw_k+(\mathbf{e}\cdot\nabla
v)_k)-\frac{n}{y^2}(Hw+\mathbf{e}\cdot\nabla v)y_k\,.
\end{align*}
Therefore
\begin{align}\label{E3}
a^{ij}v_{kij} = &\frac{n}{y}(H_kw+Hw_k+(\mathbf{e}\cdot\nabla
v)_k)-\frac{n}{y^2}(Hw+\mathbf{e}\cdot\nabla v)y_k + \frac{2}{w}a^{ij}w_iv_{kj}\notag\\
& - \frac{v_k}{w^2} + (n-1+\frac{1}{w^2}) v_k
\end{align}
and
$$a^{ij}v_kv_{ijk}-\frac{2}{w}a^{ij}w_iv_kv_{kj}=
\frac{n}{y}\nabla v\cdot(\nabla Hw+H\nabla
w+\nabla(\mathbf{e}\cdot\nabla v))-\frac{n\mathbf{e}\cdot\nabla
v}{y^2}(Hw+\mathbf{e}\cdot\nabla v)\,.$$

\noindent Note that we also have
\begin{align*}
a^{ij}w_{ij}=&\, a^{ij}(\frac{v_kv_{kij}}{w}+\frac{1}{w}a^{kl}v_{ki}v_{lj})\\
=&\,\frac{1}{w}(v_ka^{ij}(v_{ijk}-v_j\delta_{ik}+v_k\delta_{ij}))+\frac{1}{w}a^{ij}a^{kl}v_{ki}v_{lj}\,.
\end{align*}

Now by the definition of the operator $L$, we have
\begin{align*}
&(\frac{\partial}{\partial t}-L)w \\
= &\,\mathbf{e}\cdot\nabla v(H-\sigma)+\frac{y(H-\sigma)}{w}\nabla v\cdot\nabla w+y\nabla
v\cdot\nabla H\\
&-\frac{y^2}{n}\left(a^{ij}w_{ij}-\frac{2}{w}a^{ij}w_iw_j-\frac{n}{wy}(\sigma\nabla
v+w\mathbf{e})\cdot\nabla w\right)\\
= &\,\mathbf{e}\cdot\nabla v(H-\sigma)+\frac{y(H-\sigma)}{w}\nabla
v\cdot\nabla w+y\nabla v\cdot\nabla H\\
&-\frac{y^2}{n}\left[\frac{n}{wy}\nabla v\cdot\big(\nabla Hw+H\nabla
w+\nabla(\mathbf{e}\cdot\nabla v)\big)-\frac{n\mathbf{e}\cdot\nabla
v}{wy^2}(Hw+\mathbf{e}\cdot\nabla v)\right]\\
&+\frac{y^2a^{ij}v_iv_j}{nw}-\frac{y^2(w^2-1)}{nw}(n-1+\frac{1}{w^2})-\frac{y^2}{nw}a^{ij}a^{kl}v_{ki}v_{lj}\\
&-\frac{2y^2}{w^2n}a^{ij}w_iv_kv_{kj}+\frac{2y^2}{wn}a^{ij}w_iw_j+\frac{y}{w}(\sigma\nabla
v+w\mathbf{e})\cdot\nabla w\\
= &\,\mathbf{e}\cdot\nabla v(2H-\sigma)-\frac{y}{w}\left(\nabla
v\cdot\nabla(\mathbf{e}\cdot\nabla v)-w\mathbf{e}\cdot\nabla
w\right)+\frac{(\mathbf{e}\cdot\nabla v)^2}{w}\\
&+\frac{y^2}{nw}(1-\frac{1}{w^2})-y^2(w-\frac{1}{w})(1-\frac{1}{n}+\frac{1}{nw^2})-\frac{y^2}{nw}a^{ij}a^{kl}v_{ki}v_{lj}\\
\leq&\,\mathbf{e}\cdot\nabla
v(2H-\sigma)-\frac{y}{w}(-y(w^2-1))+\frac{(\mathbf{e}\cdot\nabla
v)^2}{w}+\frac{y^2}{nw}(1-\frac{1}{w^2})\\
&-y^2(w-\frac{1}{w})(1-\frac{1}{n}+\frac{1}{nw^2})-\frac{1}{w}(Hw+\mathbf{e}\cdot\nabla
v)^2\\
= &\,-\sigma(\mathbf{e}\cdot\nabla v)+\frac{y^2}{n}(w-\frac{1}{w})-H^2w\,.
\end{align*}
Here we used the equations \eqref{E1}, \eqref{E2} and (by Cauchy-Schwarz inequality)
$$a^{ij}a^{kl}v_{ki}v_{lj}\,\geq\,\frac{1}{n}(a^{ij}v_{ij})^2\,=\, \frac{n}{y^2}(Hw+\mathbf{e}\cdot\nabla v)^2.$$

Hence we conclude that
$$(\frac{\partial}{\partial t}-L)w \,\leq\, 2w\,.$$
\end{proof}

For any $\epsilon\geq 0$ and at any point $\mathbf{z}_0\in\partial\Omega_\epsilon$ corresponding to $P_0 = e^{\phi^{\epsilon}(\mathbf{z}_0)} \mathbf{z}_0\in\Gamma_\epsilon$, let $B^{\epsilon}_1 = B^{\epsilon}_{R_1}(a', -\sigma R_1)$ and $B^{\epsilon}_2 = B^{\epsilon}_{R_2}(b', \sigma R_2)$ be the (Euclidean) balls with radii $R_1>0$ and $R_2>0$, respectively, such that $B^{\epsilon}_1$ and $B^{\epsilon}_2$ are tangent at $P_0$, and $B^{\epsilon}_1 \cap \{x_{n+1}=\epsilon\}$ is internally tangent to $\Gamma_{\epsilon}$ at $P_0$, and $B^{\epsilon}_2 \cap \{x_{n+1}=\epsilon\}$ is externally tangent to $\Gamma_{\epsilon}$ at $P_0$. Recall that $S^{\epsilon}_1=\partial B^{\epsilon}_1\cap\mathbb{H} ^{n+1}$ has constant (hyperbolic) mean curvature $\sigma$ with respect to its outward normal while $S^{\epsilon}_2=\partial B^{\epsilon}_2\cap\mathbb{H} ^{n+1}$ has constant mean curvature $\sigma$ with respect to its inward normal. Moreover, we can represent $S^{\epsilon}_1$ and $S^{\epsilon}_2$ near $P_0$ as radial graphs $X_i = e^{\varphi^{\epsilon}_i}\mathbf{z}, i=1,2$ for $\mathbf{z}\in\overline{\Omega_{\epsilon}}\cap
B_{\epsilon_0}(\mathbf{z}_0)$ where $\epsilon_0$ depends only on the radii of $B^{\epsilon}_i$'s and the uniformly star-shapedness of $\Gamma$. Then the uniform local ball condition implies
\begin{equation}\label{C0Bound1}
\varphi^{\epsilon}_1(\mathbf{z})\,\leq\, v^{\epsilon}_0\,\leq \, \varphi^{\epsilon}_2(\mathbf{z})\,,\quad \mathbf{z}\in \overline{\Omega_{\epsilon}}\cap
B_{\epsilon_0}(\mathbf{z}_0)\,.
\end{equation}
From this point of view, one sees that $S^{\epsilon}_1$ and $S^{\epsilon}_2$ serve as good local barriers of $\Sigma^{\epsilon}_0$ around $P_0$ and $|\nabla v^{\epsilon}_0|(P_0) \leq C$, where $C$ is independent of $\epsilon$ and $P_0\in \Gamma_{\epsilon}$\,. Moreover, note that $S^{\epsilon}_1$ and $S^{\epsilon}_2$ have constant hyperbolic mean curvature $\sigma$ and they are static under the MMCF \eqref{MMCF2} as local radial graphs. Therefore by the maximum principle, they also serve as good local barriers of $\Sigma^{\epsilon}_t$ around $(P_0,t)$ for all $t\in [0, T_{\epsilon})$ and we have \begin{equation}\label{GBound}
|\nabla v^{\epsilon}|(P_0,t) \leq C
\end{equation}
for all $t\in [0, T_{\epsilon})$, where $C$ is independent of $\epsilon$ and $P_0$ by the uniform local ball condition.

\begin{lemma}\label{C0Bound3}
Locally $S^{\epsilon}_1$ is interior to $V_{\epsilon}$ and $S^{\epsilon}_2$ is exterior to $V_{\epsilon}$\,.
\end{lemma}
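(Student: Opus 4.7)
The plan is to apply the parabolic maximum principle to the differences $v^\epsilon - \varphi_1^\epsilon$ and $\varphi_2^\epsilon - v^\epsilon$ on the local neighborhood $U = \overline{\Omega_\epsilon}\cap B_{\epsilon_0}(\mathbf{z}_0)$ over the time interval $[0, T_\epsilon)$. The key observation is that $\varphi_1^\epsilon$ and $\varphi_2^\epsilon$ are \emph{stationary} solutions of the (degenerate) quasi-linear parabolic equation \eqref{MMCF2}: since $S_1^\epsilon$ and $S_2^\epsilon$ have constant hyperbolic mean curvature $\sigma$ with the appropriate outward orientation when represented locally as radial graphs near $P_0$, each $\varphi_i^\epsilon$ solves $0 = y^2 a^{ij}(\nabla\varphi_i^\epsilon)(\varphi_i^\epsilon)_{ij}/n - y\,\mathbf{e}\cdot\nabla\varphi_i^\epsilon - \sigma y\,w(\nabla\varphi_i^\epsilon)$ on $U$.

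Subtracting the equations satisfied by $v^\epsilon$ and $\varphi_i^\epsilon$ and applying the mean value theorem to the quasi-linear operator, I would write the differences $\eta_1 = v^\epsilon - \varphi_1^\epsilon$ and $\eta_2 = \varphi_2^\epsilon - v^\epsilon$ as solutions of linear parabolic equations $(\partial_t - \mathcal{L}_i)\eta_i = 0$ with coefficients depending on interpolated gradients. The uniform boundary gradient estimate \eqref{GBound} together with interior smoothness of $v^\epsilon$ on compact subsets of $U$ keeps $\mathcal{L}_i$ uniformly parabolic on the region under consideration.

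Next I would verify the parabolic boundary inequalities for $\eta_i \geq 0$: at $t=0$, inequality \eqref{C0Bound1} gives $\varphi_1^\epsilon \leq v_0^\epsilon \leq \varphi_2^\epsilon$ on all of $U$; on the lateral boundary $\partial\Omega_\epsilon\cap U$, the Dirichlet condition $v^\epsilon(\cdot,t)\equiv \phi^\epsilon$ together with the local ball condition applied to boundary points again gives $\varphi_1^\epsilon \leq \phi^\epsilon \leq \varphi_2^\epsilon$. The \textbf{main obstacle} is the artificial lateral boundary $\partial B_{\epsilon_0}(\mathbf{z}_0)\cap \overline{\Omega_\epsilon}$, where there is no Dirichlet datum comparing the two sides. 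Because the contact between $\Sigma_0^\epsilon$ and $S_i^\epsilon$ inside the local ball occurs only at $P_0$ (by the uniform local ball condition), one obtains a strict initial ordering $\varphi_1^\epsilon < v_0^\epsilon < \varphi_2^\epsilon$ on this artificial boundary; shrinking $\epsilon_0$ if necessary and using the uniform modulus of continuity of $v^\epsilon$ provided by \eqref{GBound} together with a short-time continuity argument, I would propagate this strict ordering for all $t \in [0,T_\epsilon)$.

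Once the parabolic boundary data are controlled, the maximum principle for linear parabolic equations yields $\eta_i \geq 0$ throughout $U \times [0, T_\epsilon)$, equivalently $\varphi_1^\epsilon \leq v^\epsilon \leq \varphi_2^\epsilon$, which translates geometrically into the desired statement that $S_1^\epsilon$ lies locally interior to $V_\epsilon$ and $S_2^\epsilon$ lies locally exterior. I expect the cleanest alternative (avoiding the artificial-boundary issue entirely) to be a sliding-barrier argument in the spirit of Lemma \ref{C0Bound}: continuously shrink or dilate $B_1^\epsilon$ and $B_2^\epsilon$ by hyperbolic isometries fixing their ``centers at infinity,'' and use the global comparison principle from Lemma \ref{C0Bound} to conclude that the only possible first contact of the deformed equidistance sphere with $V_\epsilon$ occurs at the boundary point $P_0$.
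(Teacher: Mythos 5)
The paper's own proof of Lemma~\ref{C0Bound3} is a single sentence --- ``This follows from the maximum principle'' --- so any honest proof must supply the details, and your proposal does engage with the right mechanism.

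Your first route (parabolic maximum principle applied to $v^\epsilon-\varphi_1^\epsilon$ and $\varphi_2^\epsilon-v^\epsilon$ on the local domain $U$) correctly identifies the genuine obstruction: there is no sign on the artificial lateral boundary $\partial B_{\epsilon_0}(\mathbf{z}_0)\cap\Omega_\epsilon$, so the comparison principle cannot be invoked on $U\times[0,T_\epsilon)$ as stated. However, the fix you propose there --- strict initial ordering on that artificial boundary propagated ``for all $t\in[0,T_\epsilon)$'' by a short-time continuity argument --- is not sound. Continuity and strict inequality at $t=0$ only give you the ordering on some unquantified short interval, and one cannot iterate without a uniform lower bound on the gap $\min(\varphi_2^\epsilon-v^\epsilon,\ v^\epsilon-\varphi_1^\epsilon)$ on $\partial B_{\epsilon_0}(\mathbf{z}_0)$ that is stable under the flow; establishing such a bound is essentially what the lemma is trying to prove, so as written this step is circular. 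If you wanted to pursue this route rigorously you would need, for instance, a shrinking-domain or a quantitative barrier argument on the artificial boundary, neither of which you supply.

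Your second route is the correct one and is evidently what the authors have in mind: shrink $B_1^\epsilon$ and dilate $B_2^\epsilon$ by homothetic dilations from $(a',0)$ and $(b',0)$ (hyperbolic isometries), so that the dilated balls are strictly inside/outside and disjoint from $\Sigma_0^\epsilon$; then Lemma~\ref{C0Bound}(ii),(iii) applies, the dilated equidistance spheres avoid $V_\epsilon$ for all parameters short of the tangent one, and the first-touching / limiting argument recovers the tangential case with the only possible contact at $P_0\in\Gamma_\epsilon$. This mirrors the proof of Lemma~\ref{C0Bound} given in the paper and is the argument the terse ``this follows from the maximum principle'' is alluding to. So: your preferred alternative is the paper's proof; your first attempt as stated has a gap you are right to be wary of, but the proposed continuity fix does not close it.
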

\begin{proof}
This follows from the maximum principle\,.
\end{proof}

Let $P\Omega_{\epsilon}(T_{\epsilon}^{\star}) = \Omega_{\epsilon}\times\{0\}\cup \partial\Omega_{\epsilon}\times [0,T_{\epsilon}^{\star})$ be the parabolic boundary of $\overline{\Omega_{\epsilon}}\times [0,T_{\epsilon}^{\star})$. Then Lemma \ref{lx-lt1}, equation \eqref{GBound} and the Lipschitz bound on the initial radial graph $\Sigma^{\epsilon}_0$ immediately yield (see e.g. \cite[thoerem 9.5]{L96})
\begin{equation}
w^{\epsilon}(\mathbf{z},t)\,\leq\, e^{3T_{\epsilon}^{\star}}\max_{(\mathbf{z},t)\in P\Omega_{\epsilon}(T_{\epsilon}^{\star})}  w^{\epsilon}(\mathbf{z},t)\,\leq\,C(\epsilon)\,,\quad (\mathbf{z},t)\in \overline{\Omega_{\epsilon}}\times [0,T_{\epsilon}^{\star})\,.
\end{equation}
With this gradient estimate (and therefore the H$\ddot{\text{o}}$lder gradient estimate, see e.g. \cite[theorem 12.10]{L96}), for any fixed $\epsilon>0$ the AMMCF with the approximate initial hypersurface satisfying the conditions in Theorem \ref{lx-I0} exists uniquely by the parabolic comparison principle and $v^{\epsilon}\in C^{\infty}(\Omega_{\epsilon}\times (0, \infty))\cap C^{0+1, 0+\frac{1}{2}}(\overline{\Omega_{\epsilon}}\times (0, \infty))\cap C^0(\overline{\Omega_{\epsilon}}\times [0,\infty))$ by Schauder estimates. Therefore we have proved

\begin{theorem}\label{lx-I2}
Let $\Gamma$, $\Gamma_{\epsilon}$ and $\Sigma^{\epsilon}_0$'s be as in Theorem \ref{lx-I0}. Then there exists a unique solution $\mathbf{F}(\mathbf{z},t) \in C^{\infty}(\Omega_{\epsilon}\times (0,\infty)) \cap C^{0+1, 0+\frac{1}{2}}(\overline{\Omega_{\epsilon}}\times (0,\infty))\cap C^0(\overline{\Omega_{\epsilon}}\times [0,\infty))$ to the AMMCF \eqref{AMMCF}.
\end{theorem}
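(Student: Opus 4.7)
The plan is to extend the short-time solution from Lemma \ref{LE2} to a global one by combining uniform-in-time a priori $C^0$ and $C^1$ estimates with standard parabolic bootstrapping. Fix $\epsilon > 0$. Lemma \ref{LE2} yields a solution $v^\epsilon$ on $\overline{\Omega_\epsilon} \times [0, T_\epsilon^\star)$ for some maximal $T_\epsilon^\star \leq \infty$, smooth in the parabolic interior and continuous up to the parabolic boundary. To show $T_\epsilon^\star = \infty$ and obtain the claimed regularity classes, it suffices to derive a priori bounds independent of $T_\epsilon^\star$ on any finite subinterval and then invoke a continuation argument.

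The $C^0$ bound comes from Lemma \ref{C0Bound}: since $\Sigma_0^\epsilon$ is a bounded radial graph sitting above $\{x_{n+1} = \epsilon\}$, one can inflate a large interior equidistance ball $B_1$ centered at $(a', -\sigma R)$ to enclose $\Sigma_0^\epsilon$ and shrink appropriate exterior equidistance balls to stay disjoint from it; these equidistance spheres are stationary solutions of \eqref{MMCF2}, so Lemma \ref{C0Bound} propagates the inclusion relations for all $t \in [0, T_\epsilon^\star)$, giving a uniform $L^\infty$ bound on $v^\epsilon$. The boundary gradient bound is an immediate consequence of the uniform local ball condition: at each $P_0 \in \Gamma_\epsilon$, the interior/exterior equidistance barriers $S_1^\epsilon, S_2^\epsilon$ are locally radial graphs $\varphi_1^\epsilon \leq v_0^\epsilon \leq \varphi_2^\epsilon$ on a uniform neighborhood $B_{\epsilon_0}(\mathbf{z}_0)$. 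Since $\varphi_i^\epsilon$ solve \eqref{MMCF2} statically, the parabolic comparison principle preserves the sandwich for all $t$, and the tangency at $\mathbf{z}_0$ pinches $|\nabla v^\epsilon(\mathbf{z}_0, t)| \leq C$, which is exactly \eqref{GBound}. The interior gradient bound then follows from Lemma \ref{lx-lt1}: the inequality $(\partial_t - L) w \leq 2w$ combined with the parabolic maximum principle yields
\[
w^\epsilon(\mathbf{z}, t) \,\leq\, e^{2 T_\epsilon^\star} \max_{P\Omega_\epsilon(T_\epsilon^\star)} w^\epsilon,
\]
and the parabolic boundary data on the right is controlled by the initial Lipschitz bound on $\Sigma_0^\epsilon$ together with \eqref{GBound}.

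With $C^0$ and $C^1$ estimates in hand, \eqref{MMCF2} is uniformly quasilinear parabolic on $\overline{\Omega_\epsilon} \times [0, T_\epsilon^\star)$ with coefficients bounded in terms of $\epsilon$. A H\"older gradient estimate up to the boundary (e.g.\ \cite[Theorem 12.10]{L96}) then gives $v^\epsilon \in C^{0+1, 0+\frac{1}{2}}(\overline{\Omega_\epsilon} \times (0, T_\epsilon^\star))$, and iterated interior Schauder estimates upgrade this to $v^\epsilon \in C^\infty(\Omega_\epsilon \times (0, T_\epsilon^\star))$. Because all a priori estimates depend on $T_\epsilon^\star$ only through an exponential factor, maximality of $T_\epsilon^\star$ together with re-applying Lemma \ref{LE2} at a late time slice forces $T_\epsilon^\star = \infty$. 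Uniqueness is a direct application of the parabolic comparison principle to the quasilinear equation \eqref{MMCF2}. The main obstacle I expect is the boundary gradient bound: it hinges on the special geometric fact that equidistance spheres have constant hyperbolic mean curvature $\sigma$ and so are stationary under the MMCF, allowing the initial-time local ball barriers to persist as barriers for all $t$; without this rigidity, the boundary gradient could a priori deteriorate and the whole continuation argument would break down.
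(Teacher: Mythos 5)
Your proposal is correct and follows essentially the same route as the paper: boundary gradient control via the static equidistance-sphere barriers (equation \eqref{GBound}), the interior gradient bound from Lemma \ref{lx-lt1} and the parabolic maximum principle (the paper records the comparison constant as $e^{3T_\epsilon^\star}$ rather than $e^{2T_\epsilon^\star}$, but this is immaterial), followed by H\"older-gradient and Schauder bootstrapping, a continuation argument, and uniqueness by parabolic comparison. No gaps to report.
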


\section{Sharp gradient estimates} \label{lx-c1e}
Since the earlier gradient estimate is too crude to prove the uniform convergence of the AMMCF's to the MMCF as $\epsilon\rightarrow 0$, we need a uniform sharp gradient estimate. To do this, we will need the next main technical result.

\begin{theorem}\label{lx-c1e1}
Let $v \in C^{3,\frac{3}{2}}(\Omega \times (0,T))$ be a function satisfying equation \eqref{MMCF2} for some $T>0$ and $\Omega\subseteq \mathbb{S}^{n}_+$\,. Then
\begin{equation}\label{lx-c1e1.0}(\frac{\partial}{\partial t}-L)(e^v(w+\sigma(y+\mathbf{e}\cdot\nabla
v)))\,\leq\,0 \quad \text{in } \,\Omega \times (0,T)\,,
\end{equation}
where $L$ is the linear elliptic operator from Lemma \ref{lx-lt1}\,.
\end{theorem}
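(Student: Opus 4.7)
Write $F = e^v G$ with $G = w + \sigma(y + \mathbf{e}\cdot\nabla v)$ and expand $(\partial_t - L)F$ via the second-order Leibniz rule
\begin{equation*}
(\partial_t - L)(e^v G) \,=\, e^v (\partial_t - L)G \,+\, G (\partial_t - L) e^v \,-\, \frac{2y^2}{n}\, a^{ij} (e^v)_i G_j,
\end{equation*}
where the cross term is the usual $-2A^{ij} f_i g_j$ arising because $L$ contains $\frac{y^2}{n} a^{ij}\nabla_{ij}$, and the minus sign is the essential source of the eventual cancellation. Since $(e^v)_i = e^v v_i$ and $a^{ij}v_i = v_j/w^2$, the whole computation reduces to evaluating $(\partial_t - L)$ on each of the scalars $v$, $y$, $\mathbf{e}\cdot\nabla v$, and $w$, and then algebraic bookkeeping.

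The first three pieces are direct. Substituting \eqref{MMCF2} into $v_t - Lv$ and using $|\nabla v|^2 = w^2 - 1$ yields
\begin{equation*}
(\partial_t - L)v \,=\, \frac{2y^2}{nw}\,a^{ij}w_i v_j - \frac{\sigma y}{w}.
\end{equation*}
Since $y$ is time-independent and $y_{ij} = -y\delta_{ij}$, $(\partial_t - L)y$ follows from a single line using $\sum a^{ii} = n-1+1/w^2$. The delicate ingredient is $(\partial_t - L)(\mathbf{e}\cdot\nabla v)$: differentiating \eqref{MMCF2} in a tangent direction $\tau_k$, contracting with $y_k$, and using the commutator \eqref{lx-lt5.0} together with the identity \eqref{E3} for $a^{ij}v_{kij}$ introduces a term proportional to $y\,\nabla v\cdot\nabla H$. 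For $(\partial_t - L)w$ I invoke Lemma \ref{lx-lt1}, which already exploits the Cauchy--Schwarz bound $a^{ij}a^{kl}v_{ki}v_{lj}\geq \frac{1}{n}(a^{ij}v_{ij})^2$ to produce the $-H^2 w$ term.

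Assembly: after multiplying the four pieces by their respective factors ($\sigma e^v$ for $(\partial_t - L)y$ and $(\partial_t - L)(\mathbf{e}\cdot\nabla v)$, $e^v$ for $(\partial_t - L)w$, and $G$ for $(\partial_t - L)e^v$) and adding the Leibniz cross term $-\frac{2y^2}{n}a^{ij}v_i G_j$, I expect the following cancellations: (a) the pieces proportional to $a^{ij}w_i v_j$ coming from $(\partial_t - L)v$ inside $(\partial_t - L)e^v$ cancel against parts of the cross term via $a^{ij}v_i = v_j/w^2$; (b) the second-derivative piece $a^{ij}v_{kij}$ in $(\partial_t - L)(\mathbf{e}\cdot\nabla v)$, after applying \eqref{E3} and \eqref{E2}, combines with $-H^2 w$ from Lemma \ref{lx-lt1} to form a nonpositive quadratic in $H$; (c) what remains is a purely algebraic expression in $y$, $w$, $\mathbf{e}\cdot\nabla v$ and $\sigma$ which is manifestly nonpositive since $|\sigma| < 1$.

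\textbf{Main obstacle.} The difficulty is not conceptual but combinatorial: many structurally similar terms of the form $a^{ij}w_i(\cdot)_j$, $a^{ij}v_{ki}$ and $\mathbf{e}\cdot\nabla(\cdot)$ appear across the four contributions, and a single sign error destroys the cancellation. A useful sanity check is that any equidistance sphere of hyperbolic mean curvature $\sigma$ (Section \ref{lx-es}) is a static solution of \eqref{MMCF2}, so $F$ must be \emph{constant} along it; this pins down the coefficient $\sigma$ in front of $(y + \mathbf{e}\cdot\nabla v)$ inside $G$ and predicts that \eqref{lx-c1e1.0} is sharp precisely on equidistance spheres. I would carry out the steps in the order: (i) compute each $(\partial_t - L)$ on a single scalar; (ii) perform the Leibniz assembly; (iii) apply \eqref{E2}, \eqref{E3}, and \eqref{lx-lt4.0} to reduce all second derivatives of $v$ to $H$, $w_i$ and lower-order quantities; (iv) verify the equidistance-sphere identity as a consistency check.
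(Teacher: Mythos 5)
Your overall strategy matches the paper's proof: the paper's equation \eqref{lx-c1e6.0} is precisely your Leibniz identity, and the proof proceeds exactly by computing $(\frac{\partial}{\partial t} - L)v$, $(\frac{\partial}{\partial t} - L)y$, and $(\frac{\partial}{\partial t} - L)(\mathbf{e}\cdot\nabla v)$ separately (using \eqref{E2}, \eqref{E3} and the derivative formulas for $y$ and $\mathbf{e}\cdot\nabla v$), invoking Lemma \ref{lx-lt1} for the $w$-contribution, and assembling via the product rule. After assembly the lower-order terms cancel exactly, and one obtains
\[
\left(\frac{\partial}{\partial t} - L\right)\bigl(e^v(w + \sigma(y + \mathbf{e}\cdot\nabla v))\bigr)\,\leq\,-e^v(H-\sigma)^2 w\,\leq\,0\,.
\]

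Two points in your narrative are off, and the first would actively mislead you if used as a safeguard. A static CMC-$\sigma$ radial graph gives $(\frac{\partial}{\partial t} - L)F = -LF = 0$, which means $F$ is $L$-harmonic, \emph{not} constant; on the geodesic semicircle $|X - (1,0)| = 2$ (take $n=1$, $\sigma = 0$, star-shaped about the origin), a direct computation gives $F = e^v w = (5 + 4\cos\theta)/(2+\cos\theta)$, which visibly varies with $\theta$. Equality in \eqref{lx-c1e1.0} is attained wherever $H = \sigma$, not ``precisely on equidistance spheres.'' Second, your step (c) is not what happens: there is no residual algebraic expression to control using $|\sigma| < 1$; the cancellations are exact, and the sign comes entirely from the perfect square $-(H-\sigma)^2$ produced by combining $-H^2 w$ from Lemma \ref{lx-lt1} with $\sigma(2wH - \sigma w)$ from the $(y + \mathbf{e}\cdot\nabla v)$-block. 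The bookkeeping is real and lengthy, but the structure you describe is the right one.
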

\begin{proof}From the proof of Lemma \ref{lx-lt1} we know that
\begin{equation}(\frac{\partial}{\partial t}-L)w\leq-\sigma(\mathbf{e}\cdot\nabla v)+\frac{y^2}{n}(w-\frac{1}{w})-H^2w\,.
\end{equation}
We also have
\begin{align}
&(\frac{\partial}{\partial t}-L)y =\,-L(y)\notag\\
=&\,-\frac{y^2}{n}(a^{ij}y_{ij}-\frac{2}{w}a^{ij}w_iy_j-\frac{n}{wy}(\sigma\nabla v+w\mathbf{e})\cdot\nabla y)\notag\\
=&\,-\frac{y^2}{n}(-y \sum a^{ii}-\frac{2}{w}a^{ij}w_iy_j-\frac{n}{wy}(\sigma\nabla v+w\mathbf{e})\cdot\nabla y)\\
=&\,-\frac{y^2}{n}(-\frac{2}{w}a^{ij}w_iy_j-\frac{n}{wy}(\sigma\mathbf{e}\cdot\nabla v+w)
+y-\frac{y}{w^2})\notag\\
=&\,\frac{2y^2}{nw}a^{ij}w_iy_j+\frac{y}{w}(\sigma\mathbf{e}\cdot\nabla v+w)-\frac{y^3}{n}+\frac{y^3}{nw^2}\,,\notag
\end{align}
and
\begin{align*}
&(\frac{\partial}{\partial t}-L)(\mathbf{e}\cdot\nabla v)=\mathbf{e}\cdot\nabla v_t-L(\mathbf{e}\cdot\nabla v)\notag\\
=&\,\mathbf{e}\cdot\nabla(yw(H-\sigma))-\frac{y^2}{n}\big[a^{ij}(\mathbf{e}\cdot\nabla v)_{ij}-\frac{2}{w}a^{ij}w_i(\mathbf{e}\cdot\nabla v)_j\notag\\
&-\frac{n}{wy}(\sigma\nabla v+w\mathbf{e})\cdot\nabla(\mathbf{e}\cdot\nabla v)\big]\notag\\
=&\,\mathbf{e}\cdot(\nabla yw(H-\sigma)+y\nabla w(H-\sigma)+yw\nabla H)\notag\\
&-\frac{y^2}{n}\big[a^{ij}(y_kv_{kij}-2yv_{ij}-y_jv_i)-\frac{2}{w}a^{ij}w_i(y_kv_{kj}-yv_j)\notag\\
&-\frac{n\sigma}{wy}\nabla v\cdot\nabla(\mathbf{e}\cdot\nabla v)-\frac{n}{y}\mathbf{e}\cdot\nabla(\mathbf{e}\cdot\nabla v)\big]\notag\\
=&\,(1-y^2)w(H-\sigma)+\nabla w\cdot\nabla yy(H-\sigma)+yw\mathbf{e}\cdot\nabla H\notag\\
&-\frac{y^2}{n}\big[y_k\big(\frac{n}{y}(H_kw+Hw_k+(\mathbf{e}\cdot\nabla v)_k)-\frac{n}{y^2}(Hw+\mathbf{e}\cdot\nabla v)y_k
+\frac{2}{w}a^{ij}w_iv_{kj}\notag\\
&-\frac{v_k}{w^2}+(n-1+\frac{1}{w^2})v_k\big)-\frac{\nabla v\cdot\nabla y}{w^2}
-2n(Hw+\mathbf{e}\cdot\nabla v)\notag\\
&-\frac{2}{w}a^{ij}w_iy_kv_{kj}+\frac{2y}{w}a^{ij}w_iv_j-\frac{n\sigma}{wy}\nabla v\cdot\nabla(\mathbf{e}\cdot\nabla v)
-\frac{n}{y}\mathbf{e}\cdot\nabla(\mathbf{e}\cdot\nabla v)\big]\notag\\
=&\,2wH-\sigma w(1-y^2)-\sigma y\nabla w\cdot\nabla y+(1+\frac{y^2}{n}+\frac{y^2}{nw^2})\mathbf{e}\cdot\nabla v\notag\\
&-\frac{2y^3}{nw^3}\nabla v\cdot\nabla w+\frac{y\sigma}{w}\nabla v\cdot\nabla(\mathbf{e}\cdot\nabla v)\,,\notag
\end{align*}
where we used equations \eqref{E2}, \eqref{E4}-\eqref{E1} and \eqref{E3}\,. Moreover,
\begin{equation}\label{lx-c1e4.0}
\begin{aligned}(\frac{\partial}{\partial t}-L)v&=yw(H-\sigma)-\frac{y^2}{n}(a^{ij}v_{ij}-
\frac{2}{w}a^{ij}w_iv_j-\frac{n}{wy}(\sigma \nabla v+w\mathbf{e})\cdot \nabla v)\\
&=\,yw(H-\sigma)-\frac{y^2}{n}(\frac{n}{y}Hw-\frac{2}{w^3}\nabla v\cdot\nabla w-\frac{n\sigma w}{y}+\frac{n\sigma}{wy})\\
&=\,yw(H-\sigma)-yHw+\frac{2y^2}{nw^3}\nabla v\cdot\nabla w+y\sigma w-\frac{y\sigma}{w}\\
&=\,\frac{2y^2}{nw^3}\nabla v\cdot\nabla w-\frac{y\sigma}{w}\,.
\end{aligned}
\end{equation}
Next, we note that for a function $\eta$ defined on $\Omega\times (0,T)$\,,
\begin{equation}\label{lx-c1e6.0}e^{-v}(\frac{\partial}{\partial
t}-L)(e^v\eta)=\eta(v_t-Lv)+(\eta_t-L\eta)
-\frac{y^2}{n}a^{ij}v_iv_j\eta-\frac{2y^2}{n}a^{ij}v_i\eta_j\,.
\end{equation}
In particular,
\begin{align}
e^{-v}(\frac{\partial}{\partial t}-L)(e^vw)
\leq &\,w\left(\frac{2y^2}{nw^3}\nabla v\cdot\nabla w-\frac{y\sigma}{w}\right)
+\left[-\sigma(\mathbf{e}\cdot\nabla v)+\frac{y^2}{n}(w-\frac{1}{w})-H^2w \right]\notag\\
&-\frac{y^2}{n}a^{ij}v_iv_jw-\frac{2y^2}{n}a^{ij}v_iw_j \notag\\
=&\,\frac{2y^2}{nw^2}\nabla v\cdot\nabla w-y\sigma-\sigma(\mathbf{e}\cdot\nabla v)+\frac{y^2}{n}(w-\frac{1}{w})\label{lx-c1e7.0}\\
&-H^2w-\frac{y^2}{n}(w-\frac{1}{w})-\frac{2y^2}{nw^2}\nabla v\cdot\nabla w\notag\\
=&\,-y\sigma-\sigma(\mathbf{e}\cdot\nabla v)-H^2w\,,\notag
\end{align}
and
\begin{align}e^{-v}(\frac{\partial}{\partial t}-L)(e^vy)
=& \,y \left(\frac{2y^2}{nw^3}\nabla v\cdot\nabla w
-\frac{y\sigma}{w} \right)+\frac{2y^2}{nw}a^{ij}w_iy_j \notag\\
&+\frac{y}{w}(\sigma\mathbf{e}\cdot\nabla v+w)-\frac{y^3}{n}+
\frac{y^3}{nw^2}-\frac{y^3}{n}a^{ij}v_iv_j-\frac{2y^2}{n}a^{ij}v_iy_j \notag\\
= &\,\frac{2y^3}{nw^3}\nabla v\cdot\nabla w-\frac{y^2\sigma}{w}+\frac{2y^2}{nw}\nabla y\cdot \nabla w - \frac{2y^2}{nw^3}(\nabla v\cdot \nabla w) (\nabla y\cdot \nabla v)\notag\\
&+\frac{\sigma y}{w}(\mathbf{e}\cdot\nabla v)+y-\frac{2y^3}{n}(1-\frac{1}{w^2})-\frac{2y^2}{nw^2}\nabla v\cdot\nabla y\,,\notag
\end{align}
and also
\begin{align*}
&e^{-v}(\frac{\partial}{\partial t}-L)(e^v(\mathbf{e}\cdot\nabla v))\notag\\
=& \,(\mathbf{e}\cdot\nabla v)(\frac{2y^2}{nw^3}\nabla v\cdot\nabla w-\frac{y\sigma}{w})+2wH \notag\\
&-\sigma w(1-y^2)-\sigma y\nabla w\cdot\nabla y+(\mathbf{e}\cdot\nabla v)(1+\frac{y^2}{n}+\frac{y^2}{nw^2}) \notag\\
&-\frac{2y^3}{nw^3}\nabla v\cdot\nabla w+\frac{y\sigma}{w}\nabla v\cdot\nabla(\mathbf{e}\cdot\nabla v)
-\frac{y^2}{n}(\mathbf{e}\cdot\nabla v)(1-\frac{1}{w^2})
-\frac{2y^2}{n}\frac{\nabla v\cdot\nabla(\mathbf{e}\cdot\nabla v)}{w^2} \notag
\end{align*}
\begin{align*}
=&\,\frac{2y^2}{nw^3}(\nabla v\cdot\nabla w)(\mathbf{e}\cdot\nabla v)-\frac{y\sigma}{w}(\mathbf{e}\cdot\nabla v)
+2wH-\sigma w(1-y^2) -\sigma y\nabla w\cdot\nabla y \notag\\
&+(\mathbf{e}\cdot\nabla v)(1+\frac{2y^2}{nw^2})-\frac{2y^3}{nw^3}\nabla v\cdot\nabla w
+(\frac{y\sigma}{w}-\frac{2y^2}{nw^2})(w\mathbf{e}\cdot \nabla w - y(w^2-1)),.\notag
\end{align*}
Therefore, combining the above two equations gives
\begin{align}
&e^{-v}(\frac{\partial}{\partial t}-L)(e^v(y+(\mathbf{e}\cdot\nabla v)))\notag\\
=&\, -\frac{y^2\sigma}{w}+(\frac{2y^2}{nw^2}-\frac{\sigma y}{w})y(w^2-1)+y-\frac{2y^3}{n}(1-\frac{1}{w^2}) \label{lx-c1e10.0}\\
&+2wH-\sigma w(1-y^2)+\mathbf{e}\cdot\nabla v \notag\\
= &\,y+2wH-\sigma w+\mathbf{e}\cdot\nabla v\,. \notag
\end{align}
\noindent Finally, combining equations \eqref{lx-c1e7.0} and \eqref{lx-c1e10.0} implies
\begin{equation}
(\frac{\partial}{\partial t}-L)(e^v(w+\sigma(y+\mathbf{e}\cdot \nabla v)))\,\leq \, - e^v(H-\sigma)^2w\,\leq\,0\,.
\end{equation}
\end{proof}

Combing the uniform local ball condition (see equation \eqref{GBound}) and Theorem \ref{lx-c1e1} and appealing to the maximum principle, we conclude
\begin{corollary}\label{lx-c1e4}
Let $v^{\epsilon}$ be the regular solution to the AMMCF \eqref{AMMCF1} with initial hypersurface $\Sigma^{\epsilon}_0$ as in Theorem \ref{lx-I0}. Then we have
\begin{equation}
|\nabla v^{\epsilon}(\mathbf{z},t)|\leq C\,,\quad \text{for all }\, (\mathbf{z},t)\in \overline{\Omega_{\epsilon}}\times[0,\infty)\,,
\end{equation}
where $C$ is a constant independent of $\epsilon$\,.
\end{corollary}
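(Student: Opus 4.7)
The plan is to apply the parabolic maximum principle directly to the auxiliary quantity
\[
Q^{\epsilon}(\mathbf{z},t)\,:=\,e^{v^{\epsilon}}\bigl(w^{\epsilon}+\sigma(y+\mathbf{e}\cdot\nabla v^{\epsilon})\bigr),
\]
using Theorem \ref{lx-c1e1}, which tells us $Q^{\epsilon}$ is a sub-solution of the linear parabolic operator $\partial_t - L$ on $\Omega_{\epsilon}\times(0,\infty)$. Since the operator $L$ has no zeroth-order term, the maximum principle guarantees that $\sup Q^{\epsilon}$ is attained on the parabolic boundary $P\Omega_{\epsilon} = (\Omega_{\epsilon}\times\{0\})\cup(\partial\Omega_{\epsilon}\times[0,\infty))$.

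The first step is to bound $Q^{\epsilon}$ on this parabolic boundary uniformly in $\epsilon$. On the initial slice $\Omega_{\epsilon}\times\{0\}$, the uniform Lipschitz bound on $\Sigma_0^{\epsilon}$ forces $w^{\epsilon}(\mathbf{z},0)$ and $|\nabla v_0^{\epsilon}|$ to be uniformly bounded; on the lateral boundary $\partial\Omega_{\epsilon}\times[0,\infty)$, the equidistance-sphere barriers $S_1^{\epsilon}$ and $S_2^{\epsilon}$ constructed just before Lemma \ref{C0Bound3} furnish the estimate \eqref{GBound}, i.e.\ $|\nabla v^{\epsilon}|\le C$ on $\partial\Omega_{\epsilon}\times[0,\infty)$, with $C$ independent of $\epsilon$. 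Combined with a uniform $C^{0}$ bound on $v^{\epsilon}$ (which on the lateral boundary comes from $v^{\epsilon}=\phi^{\epsilon}$ and the boundedness of $\Gamma$, and on the initial slice from the hypotheses on $\Sigma_0^{\epsilon}$), this gives a constant $C_1$, independent of $\epsilon$, such that $Q^{\epsilon}\le C_1$ on $P\Omega_{\epsilon}$, and hence on all of $\overline{\Omega_{\epsilon}}\times[0,\infty)$.

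Next I turn the bound on $Q^{\epsilon}$ into a bound on $w^{\epsilon}$. Using $|y|\le 1$ and $|\mathbf{e}\cdot\nabla v^{\epsilon}|\le|\nabla v^{\epsilon}|\le w^{\epsilon}$, the hypothesis $|\sigma|<1$ yields the pointwise inequality
\[
w^{\epsilon}+\sigma(y+\mathbf{e}\cdot\nabla v^{\epsilon})\,\ge\,(1-|\sigma|)w^{\epsilon}-|\sigma|.
\]
Therefore $C_1\ge Q^{\epsilon}\ge e^{v^{\epsilon}}\bigl((1-|\sigma|)w^{\epsilon}-|\sigma|\bigr)$, which rearranges to
\[
w^{\epsilon}(\mathbf{z},t)\,\le\,\frac{C_1 e^{-v^{\epsilon}(\mathbf{z},t)}+|\sigma|}{1-|\sigma|}.
\]
To close the estimate I invoke a uniform \emph{lower} bound $v^{\epsilon}\ge -C_2$, independent of $\epsilon$, which follows from the $C^{0}$ comparison in Lemma \ref{C0Bound}: the flow region $V_{\epsilon}$ is trapped inside a fixed large ball of type $B_1$ constructed from the uniform star-shapedness of $\Gamma$, giving a uniform upper bound on $e^{-v^{\epsilon}}$. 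Plugging this in produces a uniform upper bound on $w^{\epsilon}$ and hence on $|\nabla v^{\epsilon}|=\sqrt{(w^{\epsilon})^{2}-1}$.

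The main technical obstacle is making the $C^{0}$ bound on $v^{\epsilon}$ truly independent of $\epsilon$ in both directions, so that the denominator $1-|\sigma|$ and the factor $e^{-v^{\epsilon}}$ do not conspire to blow up as $\epsilon\to 0$. This is where the uniform local ball condition and the uniform radii of $B^{\epsilon}_{1},B^{\epsilon}_{2}$ are used essentially: they provide fixed, $\epsilon$-independent barriers both on the lateral boundary (for the gradient, via \eqref{GBound}) and in the interior (for the height, via Lemma \ref{C0Bound}). Once these uniform ingredients are in hand, the maximum principle applied to $Q^{\epsilon}$ is the soft part of the argument.
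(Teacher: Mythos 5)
Your approach matches the paper's intended proof exactly: apply the maximum principle to the quantity $Q^{\epsilon} = e^{v^{\epsilon}}(w^{\epsilon}+\sigma(y+\mathbf{e}\cdot\nabla v^{\epsilon}))$ via Theorem~\ref{lx-c1e1}, bound $Q^{\epsilon}$ on the parabolic boundary using the uniform Lipschitz hypothesis on $\Sigma_0^{\epsilon}$ together with \eqref{GBound}, and use $|\sigma|<1$ to invert the inequality $Q^{\epsilon}\ge e^{v^{\epsilon}}\bigl((1-|\sigma|)w^{\epsilon}-|\sigma|\bigr)$. You also correctly identify that the last step requires a uniform lower bound on $v^{\epsilon}$ (so that $e^{-v^{\epsilon}}$ is controlled). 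All of this is exactly what the paper relies on, and your algebra for the conversion is correct.

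There is, however, one slip in the justification of that last ingredient. You claim that the lower bound $v^{\epsilon}\ge -C_2$ (equivalently, $e^{-v^{\epsilon}}$ bounded above) follows because Lemma~\ref{C0Bound}(i) traps $V_{\epsilon}$ inside a fixed large ball $B_1$. Containment $V_{\epsilon}\subset B_1$ controls the Euclidean distance $e^{v^{\epsilon}}=|\mathbf X|$ from \emph{above}, giving an \emph{upper} bound on $v^{\epsilon}$ and a \emph{lower} bound on $e^{-v^{\epsilon}}$ — the wrong direction. What you actually need is that the flow stays a uniform distance \emph{away} from the origin. That follows instead from Lemma~\ref{C0Bound}(ii): choose a ball $B_1$ of type~$1$ centered at $(0,-\sigma R)$ with $R$ small so that $B_1\cap\{x_{n+1}=\epsilon\}\subset D_{\epsilon}$ and $B_1\cap\Sigma_0^{\epsilon}=\emptyset$ (possible uniformly in $\epsilon$, since $\Gamma$ and the $\Sigma_0^{\epsilon}$'s are uniformly bounded away from the origin by star-shapedness and the uniform Lipschitz assumption); then $B_1\cap V_{\epsilon}=\emptyset$, and since $B_1$ contains a fixed neighborhood of the origin (it contains the origin because $\sigma R<R$), this gives the required lower bound $e^{v^{\epsilon}}\ge c>0$ independent of $\epsilon$. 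With that correction the argument is complete and coincides with the paper's.
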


With the aid of Corollary \ref{lx-c1e4} and the Arzel$\grave{\text{a}}$-Ascoli theorem, letting $\epsilon \to 0$, we can extract a subsequence of the regular solutions $\{\Sigma_t^{\epsilon_i}\}$ to the AMMCF \eqref{AMMCF}, converging uniformly to $\Sigma_t \in C^{\infty}(\mathbb{S}^n_+\times (0, \infty))\cap C^{0+1,0+\frac{1}{2}}(\overline{\mathbb{S}^n_+}\times (0,\infty))\cap C^0(\overline{\mathbb{S}^n_+}\times [0,\infty))$ which solves the MMCF \eqref{MMCF} with initial hypersurface $\Sigma_0 = \lim_{\epsilon_i \to 0}\Sigma^{\epsilon_i}_0$.

\section{The boundary regularity} \label{lx-ce}
In this section we show the boundary regularity of the MMCF \eqref{MMCF} in Theorem \ref{lx-I0}. The proof closely follows the idea in section 4.3 of \cite{GS00}, cf. \cite{NS96}. Using the uniform local ball condition, we let $P_0\in\Gamma$ and set $\epsilon = 0$ in equation \eqref{C0Bound1} and denote $\varphi_1 = \varphi^0_1$ and $\varphi_2 = \varphi^0_2$. For some $\epsilon_2>0$ we have
\begin{equation}\label{lx-ce5.0}
\varphi_1(\mathbf{z})\leq v(\mathbf{z}, t)\leq \varphi_2(\mathbf{z}),\quad (\mathbf{z},t)\in \left({\mathbb{S}}^n_+\cap B_{\epsilon_2}(\mathbf{z}_0)\right)\times [0,\infty)\,.
\end{equation}
Note that the tangent plane $T$ to $S_1$ at $P_0$ is a radial graph
$T=e^{\eta}\mathbf{z}$ in ${\mathbb{S}}^n_+\cap \{\mathbf{z}\cdot\nu_0>0\}$
with
\begin{equation}\label{Tan}
\eta(\mathbf{z})=\log\frac{P_0\cdot\mathbf{e}_1}{\lambda y+\mathbf{z}\cdot\mathbf{e}_1}
\end{equation}
where $\lambda=\frac{\sigma}{\sqrt{1-\sigma^2}}$ and
$\nu_0=\sigma\mathbf{e}+\sqrt{1-\sigma^2}\mathbf{e}_1$ is the unit
normal vector to $S_1$ at $P_0.$ We also have
\begin{equation}\label{lx-ce6.0}
\varphi_1(\mathbf{z})\leq\eta(\mathbf{z})\leq\varphi_2(\mathbf{z})\,,\quad
\mathbf{z}\in{\mathbb{S}}^n_+\cap B_{\epsilon_2}(\mathbf{z}_0).
\end{equation}

\noindent We will need the following more precise estimate on $v$\,.
\begin{lemma}\label{lx-ce4}
$v(\mathbf{z},t)=\eta(\mathbf{z})+O(|\mathbf{z}-\mathbf{z}_0|^2)$ in $({\mathbb{S}}^n_+\cap B_{\epsilon_2}(\mathbf{z}_0))\times[0,\infty)$.
\end{lemma}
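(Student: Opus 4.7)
The plan is to sandwich $v(\mathbf{z},t)$ and $\eta(\mathbf{z})$ between the same spherical barrier pair $(\varphi_1,\varphi_2)$ and then exploit the second-order agreement of these barriers at $\mathbf{z}_0$.

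First, I would observe that the barrier inequality $\varphi^\epsilon_1 \le v^\epsilon(\cdot,t) \le \varphi^\epsilon_2$ from \eqref{C0Bound1}, which holds initially, persists for all $t\in[0,T_\epsilon)$ by the maximum principle, since $S^\epsilon_1$ and $S^\epsilon_2$ have constant hyperbolic mean curvature $\sigma$ and, as local radial graphs, are static solutions of \eqref{MMCF2} (this was already noted after \eqref{GBound}). Passing to the limit $\epsilon\to 0$ yields $\varphi_1(\mathbf{z}) \le v(\mathbf{z},t)\le \varphi_2(\mathbf{z})$ in $(\mathbb{S}^n_+\cap B_{\epsilon_2}(\mathbf{z}_0))\times[0,\infty)$. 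Combining with \eqref{lx-ce6.0} gives the two-sided bound
\begin{equation*}
|v(\mathbf{z},t)-\eta(\mathbf{z})| \le \varphi_2(\mathbf{z})-\varphi_1(\mathbf{z}),
\end{equation*}
reducing the lemma to the purely geometric estimate $\varphi_2(\mathbf{z}) - \varphi_1(\mathbf{z}) = O(|\mathbf{z}-\mathbf{z}_0|^2)$.

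Next, I would verify that $\varphi_1$, $\varphi_2$ and $\eta$ share the same $1$-jet at $\mathbf{z}_0$. All three radial graphs pass through $P_0 = e^{\phi(\mathbf{z}_0)}\mathbf{z}_0$, so $\varphi_1(\mathbf{z}_0) = \varphi_2(\mathbf{z}_0) = \eta(\mathbf{z}_0) = \phi(\mathbf{z}_0)$. The equality of the gradients is the statement that $S_1$ and $S_2$ share the tangent plane $T$ at $P_0$. This can be checked directly from the explicit centers $(a',-\sigma R_1)$ and $(b',\sigma R_2)$: the tangency of $B_i\cap\{x_{n+1}=0\}$ to $\Gamma$ at $P_0$ forces the horizontal projections $a'-p'$ and $b'-p'$ to be collinear with the normal to $\Gamma$ at $P_0$, while the identity $r_i/R_i = \sqrt{1-\sigma^2}$ (with $r_i = \sqrt{R_i^2 - \sigma^2 R_i^2}$) makes the Euclidean unit normals $\tfrac{P_0 - (a',-\sigma R_1)}{R_1}$ and $\tfrac{P_0 - (b',\sigma R_2)}{R_2}$ antiparallel. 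Hence $\nabla\varphi_1(\mathbf{z}_0) = \nabla\varphi_2(\mathbf{z}_0) = \nabla\eta(\mathbf{z}_0)$.

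Finally, $\varphi_1$ and $\varphi_2$ are smooth radial-graph representations of round spheres with $C^2$ norms controlled uniformly by the radii in \eqref{RADII} (and hence by the uniform local ball condition). Taylor expanding at $\mathbf{z}_0$, where the $0$-th and $1$-st order jets of $\varphi_2-\varphi_1$ vanish, yields $|\varphi_2(\mathbf{z})-\varphi_1(\mathbf{z})|\le C|\mathbf{z}-\mathbf{z}_0|^2$ on $\mathbb{S}^n_+\cap B_{\epsilon_2}(\mathbf{z}_0)$, with $C$ depending only on $R_1, R_2$ and $\sigma$. Combined with the sandwich inequality this gives the claimed estimate, with a constant that is moreover uniform in $t$. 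The main point is the short geometric verification that $T$ is the common tangent plane of $S_1$ and $S_2$ at $P_0$; everything else is maximum principle plus a standard Taylor expansion.
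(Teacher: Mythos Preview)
Your argument is correct and follows essentially the same route as the paper: sandwich $v$ between the spherical barriers $\varphi_1,\varphi_2$ (this is exactly \eqref{lx-ce5.0}, already recorded just above the lemma) and then use that $\varphi_1,\varphi_2,\eta$ agree to first order at $\mathbf{z}_0$. The only difference is cosmetic: the paper outsources the geometric estimate $|\varphi_i-\eta|=O(|\mathbf{z}-\mathbf{z}_0|^2)$ to \cite[Lemma~4.5]{GS00}, whereas you supply the short verification that $S_1$ and $S_2$ share the tangent plane $T$ at $P_0$ and then Taylor expand.
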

\begin{proof}
This follows immediately from equation \eqref{lx-ce5.0} and the estimates $|\varphi_i-\eta|(\mathbf{z}) = O(|\mathbf{z}-\mathbf{z}_0|^2), i=1,2$ from \cite[lemma 4.5 ]{GS00}.
\end{proof}

Now let $p\in{\mathbb{S}}^n_+$ and $\delta$ be the geodesic distance of $p$
to $\partial{\mathbb{S}}^n_+$ with $\delta<\epsilon_2.$ Let
$q\in\partial{\mathbb{S}}^n_+$ be the closest point to $p.$ Introduce normal
coordinates $x=(x_1, \ldots, x_n)$ in $T_q{\mathbb{S}}^n_+$ with $x(p)=(0,
\ldots, 0, \delta).$ We observe that equation \eqref{MMCF2} may be written as
$$
\frac{\partial v}{\partial t}-\frac{y^2w}{n}\nabla_i\left(\frac{\nabla^i v}{w}\right)+ y\nabla y\cdot \nabla v + \sigma yw\,=\,0
$$
or in local coordinates (cf. equation (4.33) of \cite{GS00}):
\begin{equation}\label{lx-ce7.0}
\frac{\partial v}{\partial t}-\frac{y^2w}{n\sqrt{\gamma}}\frac{\partial}{\partial x_i}\left(\frac{\sqrt{\gamma} \gamma^{ij}}{w}\frac{\partial v}{\partial x_j}\right) + y \gamma^{kl}\frac{\partial y}{\partial x_k}\frac{\partial v}{\partial x_l} + \sigma yw\,=\,0\,,
\end{equation}
where $\gamma=\det(\gamma_{ij})$ and $w^2=1+\gamma^{ij}\frac{\partial v}{\partial x_i}\frac{\partial v}{\partial x_j}.$ One sees easily that both $v$ and $\eta$ satisfy equation \eqref{lx-ce7.0} (note that the hyperplane $T$ has constant hyperbolic mean curvature $\sigma$ as well).

Set $\tilde{v}(x)=\frac{1}{\delta}v(\delta x)$ and
$\tilde{\eta}(x)=\frac{1}{\delta}\eta(\delta x).$ Then \eqref{lx-ce7.0} transforms to
\begin{equation}\label{lx-ce8.0}
\frac{\partial \tilde{v}}{\partial t}-\frac{\tilde{y}^2\tilde{w}}{n\sqrt{\tilde{\gamma}}}\frac{\partial}{\partial x_i}\left(\frac{\sqrt{\tilde{\gamma}} \tilde{\gamma}^{ij}}{\tilde{w}}\frac{\partial \tilde{v}}{\partial x_j}\right) + \tilde{y} \tilde{\gamma}^{kl}\frac{\partial \tilde{y}}{\partial x_k}\frac{\partial \tilde{v}}{\partial x_l} + \sigma \tilde{y}\tilde{w}\,=\,0\,,
\end{equation}
where $\tilde{y}(x)=\frac{1}{\delta}v(\delta x),$
$\tilde{\gamma}_{ij}(x)=\gamma_{ij}(\delta x),$
$\tilde{\gamma}=\det(\tilde{\gamma}_{ij})$ and
$\tilde{w}^2=1+\tilde{\gamma}^{ij}\frac{\partial\tilde{v}}{\partial
x_i}\frac{\partial\tilde{v}}{x_j}.$

Under this transformation we can move point $p$ to the ``interior" point $\tilde{p}=(0,...,0,1)$. For any $T>0$ and in $B_T= B_{\frac{1}{2}}(\tilde{p})\times (0,T)$, one observes that $\tilde{y}=O(1)$. Also since $\sup|\nabla\tilde{v}|=\sup|\nabla v|\leq C$ and by \cite[theorem 12.10]{L96}, $\tilde{v}$ is uniformly $C^{1+\alpha, \frac{1+\alpha}{2}}$. Moreover, since $\tilde{\eta}$ satisfies the same equation \eqref{lx-ce7.0}, $\tilde{v}-\tilde{\eta}$ satisfies a linear uniformly parabolic equation $\overline{L}(\tilde{v}-\tilde{\eta})=0$ with uniformly H$\ddot{\text{o}}$lder continuous coefficients. Then by the standard parabolic Schauder-type estimates and Lemma \ref{lx-ce4} we get
$$\sup_{B_T}\left(|\nabla(\tilde{v}-\tilde{\eta})|+|\nabla^2(\tilde{v}-\tilde{\eta})|\right)\,\leq\,C_1\sup_{B_T}|\tilde{v}-\tilde{\eta}|\,\leq\, C\delta\,.$$
Returning to the original variable we obtain
\begin{equation}\label{lx-ce9.0}
|\nabla v|+|\nabla^2 v|\leq C\,,\quad \text{where }\, C \,\text{ is independent of }\, \delta\,.
\end{equation}
Now by equation \eqref{decrease} and Lemma \ref{decrease1}, the energy functional $\mathcal{I}$ is non-increasing as time $t$ increases and the MMCF subconverges to a smooth complete hypersurface $\Sigma_{\infty} \in C^{\infty}(\mathbb{S}^n_+)\cap C^{1+1}(\overline{\mathbb{S}^n_+})$ with constant hyperbolic mean curvature $\sigma$ and $\partial \Sigma_{\infty} = \Gamma \subset \partial_{\infty}\mathbb{H}^{n+1}$\,. Thus we have proved

\begin{theorem}\label{SUB} Let $v \in C^{\infty}(\mathbb{S}^{n}_+\times (0,\infty))\cap C^{0+1,0+\frac{1}{2}}(\overline{\mathbb{S}^{n}_+}\times (0,\infty)) \cap C^0(\overline{\mathbb{S}^{n}_+}\times [0,\infty))$ be a solution to the MMCF \eqref{MMCF1} and $\phi\in C^{1+1}(\partial\overline{\mathbb{S}^{n}_+})$. Then $v\in C^\infty({\mathbb{S}}^n_+\times (0,\infty))\cap C^{1+1, \frac{1}{2}+\frac{1}{2}}(\overline{\mathbb{S}^n_+}\times (0, \infty))\cap C^0(\overline{\mathbb{S}^{n}_+}\times [0,\infty))$. Moreover, there exist $t_i \nearrow \infty$ such that $\Sigma_{t_i} = F({\mathbb{S}}^n_+, t_i)$ converges to a unique stationary smooth complete hypersurface $\Sigma_{\infty}\in C^{\infty}(\mathbb{S}^n_+)\cap C^{1+1}(\overline{\mathbb{S}^n_+})$ (as a radial graph over $\mathbb{S}^n_+$) which has constant hyperbolic mean curvature $\sigma$ and $\partial \Sigma_{\infty} = \Gamma$ asymptotically.
\end{theorem}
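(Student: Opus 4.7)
The plan is to combine the local boundary barrier analysis already carried out in this section with a rescaling/Schauder-estimate argument to upgrade regularity, and then use the monotonicity from Lemma \ref{decrease1} to extract a subsequential stationary limit. The structure follows directly from the material developed just before the statement, and the task is mainly to assemble it.

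First I would establish the $C^{1+1}$ boundary regularity as $t\to \partial_\infty \mathbb H^{n+1}$. Fix $\mathbf{z}_0 \in \partial \mathbb{S}^n_+$ and use the uniform local ball condition to get two equidistance spheres $S_1, S_2$ tangent to $\Gamma$ at $P_0 = e^{\phi(\mathbf{z}_0)}\mathbf{z}_0$. As in \eqref{lx-ce5.0} and \eqref{lx-ce6.0}, these sandwich $v(\cdot,t)$ between the radial heights $\varphi_1, \varphi_2$ of $S_1, S_2$ for all $t\geq 0$. Combined with the tangent plane $T$ (with radial height $\eta$ given by \eqref{Tan}), which also has constant hyperbolic mean curvature $\sigma$, Lemma \ref{lx-ce4} yields $v(\mathbf{z},t)=\eta(\mathbf{z})+O(|\mathbf{z}-\mathbf{z}_0|^2)$ uniformly in $t$. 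Next I would perform the rescaling $\tilde{v}(x)=\delta^{-1}v(\delta x)$, $\tilde\eta(x)=\delta^{-1}\eta(\delta x)$ around interior points $p$ at geodesic distance $\delta$ from $\partial \mathbb{S}^n_+$. In the rescaled variables equation \eqref{lx-ce7.0} becomes \eqref{lx-ce8.0}, which on the ball $B_{1/2}(\tilde p)$ is uniformly non-degenerate (since $\tilde y = O(1)$ there) with uniform $C^{0+1,0+\frac12}$ data by Corollary \ref{lx-c1e4} and \cite[Theorem 12.10]{L96}. Since $\tilde\eta$ also solves the same PDE, the difference $\tilde v-\tilde\eta$ solves a linear uniformly parabolic equation $\overline L(\tilde v-\tilde\eta)=0$ with uniformly Hölder coefficients, so standard parabolic Schauder estimates applied on $B_T = B_{1/2}(\tilde p)\times(0,T)$ together with Lemma \ref{lx-ce4} give
\[
\sup_{B_T}\bigl(|\nabla(\tilde v-\tilde\eta)|+|\nabla^2(\tilde v-\tilde\eta)|\bigr)\,\leq\, C_1\sup_{B_T}|\tilde v-\tilde\eta|\,\leq\,C\delta.
\]
Undoing the scaling produces \eqref{lx-ce9.0}, i.e., a uniform $|\nabla v|+|\nabla^2 v|\leq C$ up to the asymptotic boundary, and combined with the interior smoothness from Theorem \ref{lx-I2} this upgrades the regularity to $v\in C^\infty(\mathbb{S}^n_+\times(0,\infty))\cap C^{1+1,\frac12+\frac12}(\overline{\mathbb{S}^n_+}\times(0,\infty))$. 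The $C^0$ matching up to $t=0$ is automatic from the hypotheses.

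For the subconvergence claim I would use the gradient-flow identity of Lemma \ref{decrease1}. Since $\mathcal{I}(\Sigma^\epsilon_t)$ is bounded below by the standard isoperimetric/volume control in the half-space model and $\mathcal{I}$ is non-increasing along the flow (with the appropriate passage $\epsilon \to 0$), we get $\int_0^\infty\!\int_{\mathbb{S}^n_+}(H-\sigma)^2\, dA\,dt<\infty$. Hence there exists $t_i\nearrow\infty$ with $\int_{\mathbb{S}^n_+}(H-\sigma)^2\,dA\big|_{t=t_i}\to 0$. Combining this with the uniform $C^{1+1}$ bound just established and the interior $C^\infty$ bounds (via parabolic Schauder iterated on \eqref{MMCF1}), Arzelà--Ascoli gives a subsequential limit $\Sigma_\infty \in C^\infty(\mathbb{S}^n_+)\cap C^{1+1}(\overline{\mathbb{S}^n_+})$ (as a radial graph) with $H\equiv \sigma$ and $\partial\Sigma_\infty = \Gamma$ asymptotically. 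Uniqueness of $\Sigma_\infty$ is standard: any two such hypersurfaces can be compared using the equidistance sphere maximum principle of Lemma \ref{C0Bound5} by dilating one toward the other until first contact and applying the elliptic strong maximum principle.

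The delicate step is the passage from the barrier sandwich to the full $C^{1+1}$ estimate at the degenerate boundary $\{y=0\}$: it is precisely here that the rescaling $x\mapsto \delta x$ is essential, and one must verify that the rescaled coefficients in \eqref{lx-ce8.0} are uniformly Hölder independent of $\delta$ so that the Schauder constant $C_1$ is genuinely uniform. Everything else is a relatively routine combination of the energy identity, the uniform gradient estimate of Corollary \ref{lx-c1e4}, and standard parabolic regularity theory.
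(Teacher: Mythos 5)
Your proposal reproduces the paper's own proof essentially line by line: the sandwich by equidistance spheres, the tangent-plane comparison of Lemma \ref{lx-ce4}, the rescaling $\tilde v(x)=\delta^{-1}v(\delta x)$ feeding into a linear uniformly parabolic equation for $\tilde v-\tilde\eta$ and Schauder estimates to get \eqref{lx-ce9.0}, and then the energy monotonicity from Lemma \ref{decrease1} to extract a subsequential CMC limit. The only addition beyond what the paper writes out is your explicit sketch of the uniqueness of $\Sigma_\infty$ via Lemma \ref{C0Bound5}, which the paper implicitly defers to \cite{GS00}.
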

So now all that is left to prove of Theorem \ref{lx-I0} is the uniform convergence of the MMCF in the case that $\Sigma^{\epsilon}_0$ has mean curvature $H^{\epsilon}\geq \sigma$ for all $\epsilon>0$ sufficiently small\,.

\section{Uniform convergence} \label{lx-cv}
  In this section we will show the uniform convergence of the regular solution to the MMCF \eqref{MMCF} as $t\rightarrow\infty$ in the case of $H^{\epsilon}\geq \sigma$ initially for all $\epsilon>0$. To do this, we first show that for any fixed $\epsilon$ sufficiently small and for any $\mathbf{z}_0\in \Omega_{\epsilon}$, $v^{\epsilon}(\mathbf{z}_0,t)$ is non-decreasing along the flow, where $v^{\epsilon}$ is the regular solution to the AMMCF \eqref{AMMCF1} for radial graphs\,. This is an immediate corollary of the following lemma.

\begin{lemma}\label{lx-ce1}
Let $v \in C^{3,\frac{3}{2}}(\Omega \times (0,T))$ be a function satisfying equation \eqref{MMCF2} for some $T>0$ and $\Omega\subseteq \mathbb{S}^{n}_+$\,. Then
\begin{equation}\label{lx-ce1.0}
(\frac{\partial}{\partial t}-\widetilde{L})(yw(H-\sigma))=0 \quad \text{in }\,\Omega \times (0,T)\,,
\end{equation}
where $\widetilde{L}$ is the linear elliptic operator
$$\widetilde{L} \equiv \frac{y^2}{n}a^{ij}\nabla_{ij}+\left[\frac{2y^2}{nw^3}(\nabla
w\cdot\nabla v)\nabla v-\frac{2y^2\nabla w}{nw}-\frac{\sigma
y}{w}\nabla v-y\mathbf{e}\right]\cdot\nabla.$$
\end{lemma}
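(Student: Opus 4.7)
The key observation is that by equation \eqref{MMCF2}, we have $yw(H-\sigma)=v_t$, so the claim is equivalent to the statement that $u:=v_t$ satisfies the linear equation $u_t=\widetilde{L}u$. In this light, the lemma is just the standard fact that differentiating a quasilinear parabolic PDE in $t$ gives a linear parabolic equation for $v_t$; the content is to check that the linearization of the MMCF operator on radial graphs is exactly $\widetilde{L}$. Unlike Lemma \ref{lx-lt1} and Theorem \ref{lx-c1e1}, which produce inequalities, here I expect an exact equality, since no Cauchy--Schwarz or discarding step is needed.

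The plan is to apply $\partial_t$ directly to both sides of equation \eqref{MMCF2}, noting that $y$ and $\mathbf{e}$ are time-independent. The right-hand side splits into three pieces: $\partial_t(a^{ij}v_{ij})=(\partial_t a^{ij})v_{ij}+a^{ij}u_{ij}$, producing the desired second-order term $\tfrac{y^2}{n}a^{ij}u_{ij}$ plus a lower-order contribution to be computed; $\partial_t(y\mathbf{e}\cdot\nabla v)=y\mathbf{e}\cdot\nabla u$, contributing the drift $-y\mathbf{e}\cdot\nabla u$ present in $\widetilde{L}$; and $\partial_t(\sigma yw)=\sigma y\,w_t$. From $w^2=1+|\nabla v|^2$ one immediately gets $w_t=(\nabla v\cdot\nabla u)/w$, so this last piece is $\sigma y(\nabla v\cdot\nabla u)/w$, accounting for the $-\tfrac{\sigma y}{w}\nabla v\cdot\nabla u$ drift.

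The only nontrivial computation is $(\partial_t a^{ij})v_{ij}$. Using $a^{ij}=\delta_{ij}-v_iv_j/w^2$ together with the expression for $w_t$ above, I compute
\begin{equation*}
\partial_t a^{ij}=-\frac{u_iv_j+v_iu_j}{w^2}+\frac{2v_iv_j(\nabla v\cdot\nabla u)}{w^4}.
\end{equation*}
Contracting with the symmetric tensor $v_{ij}$ and invoking the identities $v_jv_{ij}=ww_i$ and $v_iv_jv_{ij}=w\,\nabla v\cdot\nabla w$ (recorded in Section \ref{lx-lt}), this collapses cleanly to
\begin{equation*}
(\partial_t a^{ij})v_{ij}=-\frac{2\,\nabla u\cdot\nabla w}{w}+\frac{2(\nabla v\cdot\nabla w)(\nabla v\cdot\nabla u)}{w^3},
\end{equation*}
which after multiplication by $y^2/n$ yields precisely the $-\tfrac{2y^2\nabla w}{nw}\cdot\nabla u$ and $\tfrac{2y^2}{nw^3}(\nabla w\cdot\nabla v)(\nabla v\cdot\nabla u)$ drift terms appearing in $\widetilde{L}$.

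Summing all four contributions and regrouping the first-order terms in $\nabla u$ gives exactly $\widetilde{L}u$, with no residual terms. The main (and only) obstacle is therefore bookkeeping: making sure that the drift from $\partial_t a^{ij}$ combines correctly with the contributions from $\partial_t w$ and $\partial_t(\mathbf{e}\cdot\nabla v)$ to reproduce the bracketed coefficient in the definition of $\widetilde{L}$. No new geometric identity (such as the commutation formula \eqref{lx-lt5.0} used in Lemma \ref{lx-lt1}) should be required, since we are differentiating in $t$ rather than in space.
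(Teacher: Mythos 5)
Your proof is correct and takes essentially the same approach as the paper's: both recognize that $yw(H-\sigma)=v_t$ by \eqref{MMCF2} and then differentiate the quasilinear PDE in $t$, using $w_t = (\nabla v\cdot\nabla h)/w$, the formula for $\partial_t a^{ij}$, and the identities $v_jv_{ij}=ww_i$, $v_iv_jv_{ij}=w\,\nabla v\cdot\nabla w$ to reduce $(\partial_t a^{ij})v_{ij}$ to the two drift terms. Your version is in fact a slight streamlining: the paper routes the computation through $g=H-\sigma$ and the expansion of $H_t$, which forces an intermediate use of \eqref{E2} and cancellations of the $yHw_t$ terms, whereas differentiating the right-hand side of \eqref{MMCF2} directly (as you do) bypasses those cancellations entirely.
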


\begin{proof}
Let $g = H-\sigma$ and $h = ywg$, we have
\begin{equation}
\frac{\partial v}{\partial t}\,=\, yw (H-\sigma)\,=\,ywg \,=\,h\,,
\end{equation}
\begin{equation}
\label{lx-ce2.0}\frac{\partial w}{\partial t}=\frac{1}{w}\nabla v\cdot\nabla (ywg)=\frac{1}{w}\nabla v\cdot\nabla h \,,
\end{equation}
\begin{equation}\label{lx-ce3.0}
\frac{\partial a^{ij}}{\partial t}=\frac{2v_iv_j\nabla
v\cdot\nabla h}{w^4}-\frac{h_iv_j + h_jv_i}{w^2}\,,
\end{equation}
and
\begin{equation}\label{lx-ce4.0}
\frac{\partial H}{\partial t}=\frac{y}{nw}(a^{ij}_t v_{ij} + a^{ij}(v_t)_{ij})
-\frac{ya^{ij}v_{ij}w_t}{nw^2}-\frac{(\mathbf{e}\cdot\nabla
v)_t}{w}+\frac{(\mathbf{e}\cdot\nabla v)w_t}{w^2}\,.
\end{equation}
Therefore by equations \eqref{lx-ce2.0}-\eqref{lx-ce4.0} and \eqref{E2}, we have
\begin{align*}
&\frac{\partial h}{\partial
t}\,=\,yw_tg+ywg_t\\
=&\,yw_tg+yw\left[\frac{ya^{ij}_tv_{ij}+ya^{ij}h_{ij}}{nw}-\frac{ya^{ij}v_{ij}w_t}{nw^2}-\frac{(\mathbf{e}\cdot\nabla v)_t}{w}
+\frac{(\mathbf{e}\cdot\nabla v)w_t}{w^2}\right]\\
=&\,yHw_t - \sigma y w_t+\frac{y^2v_{ij}}{n}\left(\frac{2v_iv_j\nabla v\cdot\nabla h}{w^4}-\frac{h_iv_j + h_jv_i}{w^2}\right)+\frac{y^2}{n}a^{ij}h_{ij}\\
&-\frac{y(Hw+\mathbf{e}\cdot\nabla v)}{w}w_t-y(\mathbf{e}\cdot\nabla v)_t
+\frac{y}{w}(\mathbf{e}\cdot\nabla v)w_t\\
=&\,yHw_t -\frac{\sigma y}{w}\nabla v\cdot\nabla h+\frac{2y^2}{nw^3}(\nabla w\cdot\nabla v)(\nabla v\cdot\nabla h)
-\frac{2y^2\nabla w\cdot\nabla h}{nw}\\
&+\frac{y^2}{n}a^{ij}h_{ij}-yHw_t-y(\mathbf{e}\cdot\nabla v)_t\\
=&\,\frac{y^2}{n}a^{ij}h_{ij}+\frac{2y^2}{nw^3}(\nabla w\cdot\nabla v)(\nabla v\cdot\nabla h)-\frac{2y^2\nabla w\cdot\nabla h}{nw}-\frac{\sigma y}{w}\nabla v\cdot\nabla h-y(\mathbf{e}\cdot\nabla h).
\end{align*}
This completes the proof of the lemma using the definition of the operator $\widetilde{L}$.
\end{proof}

\begin{corollary} \label{lx-ce2}
Suppose $\Sigma^{\epsilon}_0$ has mean curvature $H^{\epsilon}\geq \sigma$. Then $\frac{\partial v^{\epsilon}}{\partial t} = yw^{\epsilon}(H^{\epsilon}-\sigma)\,\geq\,0$ for all $(\mathbf{z},t) \in \overline{\Omega_\epsilon}\times[0,\infty)$\,.
\end{corollary}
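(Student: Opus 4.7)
The plan is to apply the parabolic minimum principle to the quantity $h = yw^{\epsilon}(H^{\epsilon}-\sigma) = \partial_t v^{\epsilon}$, exploiting the linear parabolic equation derived in Lemma \ref{lx-ce1}. Since Lemma \ref{lx-ce1} shows that $h$ satisfies $(\partial_t - \widetilde{L})h = 0$ with $\widetilde{L}$ a linear second order operator whose principal part $\frac{y^2}{n}a^{ij}\nabla_{ij}$ is uniformly elliptic on $\overline{\Omega_{\epsilon}}$ (recall that $y \geq \epsilon > 0$ on $\overline{\Omega_{\epsilon}}$, and $w^{\epsilon}$ is bounded by the gradient estimate of Section \ref{lx-lt}), the sign of $h$ is controlled by its values on the parabolic boundary $P\Omega_{\epsilon} = (\Omega_{\epsilon}\times\{0\}) \cup (\partial\Omega_{\epsilon}\times [0,\infty))$.

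First I would check the sign of $h$ on the initial slice $\Omega_{\epsilon}\times\{0\}$: by hypothesis $H^{\epsilon}(\cdot,0) \geq \sigma$, and since $y > 0$ and $w^{\epsilon} \geq 1$, we immediately get $h(\cdot,0) = yw^{\epsilon}(H^{\epsilon}-\sigma)\big|_{t=0} \geq 0$. Next I would check the sign of $h$ on the lateral boundary $\partial\Omega_{\epsilon}\times[0,\infty)$: the Dirichlet condition in \eqref{AMMCF1} is $v^{\epsilon}(\mathbf{z},t) = \phi^{\epsilon}(\mathbf{z})$ for all $\mathbf{z}\in\partial\Omega_{\epsilon}$, which is time-independent, so $\partial_t v^{\epsilon} = 0$ on $\partial\Omega_{\epsilon}\times[0,\infty)$, i.e., $h \equiv 0$ there. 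Thus $h \geq 0$ on all of $P\Omega_{\epsilon}$.

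The conclusion then follows from the parabolic minimum principle applied to the homogeneous linear equation $(\partial_t - \widetilde{L})h = 0$ on the bounded cylinder $\overline{\Omega_{\epsilon}}\times[0,T]$ for each $T<\infty$: a function satisfying a linear parabolic equation with no zero-order term which is nonnegative on the parabolic boundary must be nonnegative throughout. Since $T$ is arbitrary, $h \geq 0$ on $\overline{\Omega_{\epsilon}}\times[0,\infty)$, which is precisely the claim $\partial_t v^{\epsilon} = yw^{\epsilon}(H^{\epsilon}-\sigma) \geq 0$.

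I do not anticipate any serious obstacle here since Lemma \ref{lx-ce1} already carried out the only nontrivial computation, and the coefficients of $\widetilde{L}$ are smooth and bounded on the interior (where $v^{\epsilon}$ is smooth by the existence result of Theorem \ref{lx-I2}) with $h \in C^0(\overline{\Omega_{\epsilon}}\times[0,\infty))$. The only mild point to note is that while $H^{\epsilon}\geq \sigma$ is assumed only initially, the corollary propagates this pointwise inequality for all positive time, which is exactly the content of the minimum principle applied to the linear evolution of the auxiliary quantity $h$.
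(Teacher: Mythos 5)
Your argument is correct and is essentially the same as the paper's: identify $h = \partial_t v^\epsilon = yw^\epsilon(H^\epsilon-\sigma)$, note it vanishes on the lateral boundary because the Dirichlet data $\phi^\epsilon$ is time-independent, note $h\geq 0$ at $t=0$ from the hypothesis $H^\epsilon\geq\sigma$, and apply the parabolic minimum principle to the linear equation from Lemma \ref{lx-ce1}. You have merely written out the routine verification of ellipticity and boundary signs that the paper leaves implicit.
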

\begin{proof} Since for any $\epsilon$, $v^{\epsilon}(\mathbf{z},t)\equiv\phi^{\epsilon}(\mathbf{z}),\,\mathbf{z} \in \partial\Omega_\epsilon,$  we have $v_t\equiv 0$ on $\partial\Omega_\epsilon\times (0,\infty)$. Then the condition $H^{\epsilon}\geq \sigma$ at $t=0$, Lemma \ref{lx-ce1} and the maximum principle imply that $\frac{\partial v^{\epsilon}}{\partial t} = yw^{\epsilon}(H^{\epsilon}-\sigma)\geq 0.$
\end{proof}

\begin{theorem}
\label{lx-cv1} Let $\Gamma$, $\Gamma_{\epsilon}$ and $\Sigma^{\epsilon}_0$'s be as in Theorem \ref{lx-I0} and suppose $\Sigma^{\epsilon}_0$ has mean curvature $H^{\epsilon}\geq \sigma$ for all $\epsilon>0$ sufficiently small. Then $\Sigma_t$ converge uniformly for all $t$ to a unique smooth complete star-shaped hypersurface $\Sigma_{\infty} \in C^{\infty}(\mathbb{S}^n_+)\cap C^{1+1}(\overline{\mathbb{S}^n_+})$ with constant hyperbolic mean curvature $\sigma$ and boundary $\Gamma$.
\end{theorem}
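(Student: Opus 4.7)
The plan is to upgrade the subsequential convergence $\Sigma_{t_i}\to\Sigma_\infty$ furnished by Theorem \ref{SUB} to full uniform convergence $\Sigma_t\to\Sigma_\infty$ as $t\to\infty$, by exploiting the monotonicity $\partial_t v^\epsilon\geq 0$ from Corollary \ref{lx-ce2} together with the global upper barrier supplied by the equidistance spheres. The key point is that once monotonicity is available, subsequential convergence to a continuous limit on the compact domain $\overline{\mathbb{S}^n_+}$ automatically self-promotes to uniform convergence via a Dini-type argument.

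First, I would propagate the one-sided estimate from Corollary \ref{lx-ce2} through the approximation. Since $v^{\epsilon_i}\to v$ locally uniformly along a subsequence (as constructed at the end of Section \ref{lx-c1e} using the $\epsilon$-independent gradient bound of Corollary \ref{lx-c1e4}), the limit $v(\mathbf{z},t)$ is also non-decreasing in $t$. Combining this with the uniform upper barrier from Lemma \ref{C0Bound}(i)$-$a sufficiently large outer equidistance sphere $S_1$ containing every $\Sigma_t^{\epsilon}$$-$yields a uniform bound $v(\mathbf{z},t)\leq M$ on $\overline{\mathbb{S}^n_+}\times[0,\infty)$. By monotone convergence, for each $\mathbf{z}\in\overline{\mathbb{S}^n_+}$ the pointwise limit
\[
v_\infty(\mathbf{z}) \,:=\, \lim_{t\to\infty} v(\mathbf{z},t)
\]
exists, and $v_\infty\equiv\phi$ on $\partial\mathbb{S}^n_+$ since the boundary data is preserved in time.

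Second, I would identify $v_\infty$ with the radial height of the stationary hypersurface $\Sigma_\infty$ from Theorem \ref{SUB}. Because there is already a subsequence $t_i\nearrow\infty$ with $\Sigma_{t_i}\to\Sigma_\infty$ and $\Sigma_\infty\in C^{\infty}(\mathbb{S}^n_+)\cap C^{1+1}(\overline{\mathbb{S}^n_+})$, the pointwise monotone limit $v_\infty$ must coincide with the radial height of $\Sigma_\infty$, which in particular is continuous on the compact set $\overline{\mathbb{S}^n_+}$. The functions $v(\cdot,t)$ are likewise continuous on $\overline{\mathbb{S}^n_+}$ (by the regularity provided in Theorem \ref{SUB}) and match $v_\infty$ on $\partial\mathbb{S}^n_+$. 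Applying Dini's theorem to the monotone family $\{v(\cdot,t)\}_{t\geq 0}$ of continuous functions with continuous pointwise limit $v_\infty$ on the compact set $\overline{\mathbb{S}^n_+}$ gives uniform convergence $v(\cdot,t)\to v_\infty$, that is, $\Sigma_t\to\Sigma_\infty$ uniformly as $t\to\infty$. Uniqueness of the limiting hypersurface then follows from uniqueness of the monotone pointwise limit $v_\infty$, and independently from the barrier-based comparison in Lemma \ref{C0Bound5}.

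The main obstacle to carrying out this outline is ensuring that the monotone limit $v_\infty$ is genuinely continuous up to $\partial\mathbb{S}^n_+$, which is the hypothesis under which Dini's theorem legitimately upgrades pointwise to uniform convergence. This boundary continuity, however, is essentially already encoded in Theorem \ref{SUB} (which in turn rests on the boundary regularity machinery of Section \ref{lx-ce}); consequently the remaining work here is the comparatively soft monotonicity-plus-barriers-plus-Dini argument sketched above.
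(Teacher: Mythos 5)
Your proposal is correct and follows essentially the same route as the paper: combine the subsequential convergence from Theorem \ref{SUB} with the monotonicity $\partial_t v \geq 0$ from Corollary \ref{lx-ce2}. The paper states this implication tersely, and your Dini-type argument (with the observation that the barrier from Lemma \ref{C0Bound}(i) keeps the monotone family bounded and the boundary regularity of Section \ref{lx-ce} makes $v_\infty$ continuous on $\overline{\mathbb{S}^n_+}$) is exactly the intended filling of that gap.
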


\begin{proof} The subconvergence of the flow follows from Theorem \ref{SUB}. Corollary \ref{lx-ce2} then yields $\frac{\partial v}{\partial t} \geq 0$, where $v$ is the regular solution to the MMCF \eqref{MMCF1} for radial graphs. This monotonicity of $v$ implies that the regular solution $\Sigma_t$ to the MMCF \eqref{MMCF} with initial hypersurface $\Sigma_0$ converges uniformly for all $t$ to $\Sigma_{\infty}$\,.
\end{proof}
\noindent This completes the proof of Theorem \ref{lx-I0}\,.

\section{Proof of Theorem \ref{lx-I01} and ``good'' initial hypersurfaces}\label{InitialS}
 In this section we will prove Theorem \ref{lx-I01} and give an example of ``good'' initial hypersurfaces for the Dirichlet problems \eqref{AMMCF1} and \eqref{MMCF1}.

 \begin{proof}(of Theorem \ref{lx-I01})  Note that since for any $\epsilon>0$ we have $H^{\epsilon}\geq \sigma$, $\Sigma^{\epsilon}_0$ (as a radial graph of the function $e^{v_0^{\epsilon}}$ over $\Omega_\epsilon$) is a subsolution to the AMMCF \eqref{AMMCF1}. Therefore $\Sigma^{\epsilon}_0$ serves as a natural lower barrier for the AMMCF. Combining this with the uniform exterior local ball condition yields the same proof as the one of Theorem \ref{lx-I0} given in the previous sections, except the $C^{1+1}$ boundary regularity of the flow. The $C^{1+1}$ boundary regularity of the limiting hypersurface $\Sigma_{\infty}$ follows from an elliptic version of the argument given in Section \ref{lx-ce}, see also section $4.3$ of \cite{GS00}\,.
 \end{proof}

To find an example of ``good'' initial hypersurfaces in Theorem \ref{lx-I01}, namely, for any $\epsilon>0$ we will restrict ourselves to looking for an initial smooth ($C^2$-) hypersurface $\Sigma_0^{\epsilon} = \mathbf{F}(\Omega_\epsilon, 0)$ that can be represented as a radial graph of the function $e^{v_0^{\epsilon}}$ over $\Omega_\epsilon\subset{\mathbb{S}}^n_+$, having hyperbolic mean curvature $H^{\epsilon} \geq \sigma$ and $\Gamma_{\epsilon}$ as its boundary. Moreover, $\Sigma_0^{\epsilon}$'s satisfy the uniform exterior local ball condition and $|\nabla v_0^{\epsilon}|(\mathbf{z})\leq C$ for all $\mathbf{z}\in \overline{\Omega_\epsilon}$, where $C$ is a constant independent of $\epsilon$. For any $\epsilon>0$ sufficiently small, we will simply apply the implicit function theorem to construct a smooth hypersurface in $\mathbb{H}^{n+1}$ of constant hyperbolic mean curvature close to $1$ with boundary $\Gamma_{\epsilon}$ to serve as such ``good'' initial hypersurface $\Sigma_0^{\epsilon}$.

From equations \eqref{lx-HS1.0} and \eqref{PHI_1}, one observes that if a smooth radial graph of the function $e^v$ over $\Omega_{\epsilon}$ has constant mean curvature $\sigma$ with prescribed boundary $\Gamma_{\epsilon}$, then $v$ satisfies
\begin{equation}\label{lx-is1.0}
\left\{
\begin{aligned}
& a^{ij}v_{ij}=\frac{n}{y}(\sigma w+\mathbf{e}\cdot\nabla v)\,\quad \text{in }\, \Omega_\epsilon\,,\\
& v=\phi^{\epsilon}\, \quad \text{on }\,\partial\Omega_\epsilon\,,\\
\end{aligned}
\right.
\end{equation}
where $\phi^{\epsilon}\in C^{1+1}(\partial\Omega_\epsilon)$ is assumed.

It is clear that for $\sigma = 1$, the flat domain $D_{\epsilon}\subset \{x_{n+1}=\epsilon\}$ enclosed by $\Gamma_{\epsilon}$ (known as ``horosphere'') is the corresponding smooth radial graph satisfying \eqref{lx-is1.0}. Therefore, there exists $\sigma_0 \in [0, 1)\cap [\sigma, 1)$ with $\sigma_0$ being sufficiently close to $1$ so that the implicit function theorem applies to \eqref{lx-is1.0}. In this way, we can obtain a hypersurface $\Sigma_0^\epsilon = \{e^{v_0^{\epsilon}}\mathbf{z}:\,\mathbf{z} \in \overline{\Omega_{\epsilon}}\}$, where $v_0^{\epsilon} \in C^{\infty}(\Omega_{\epsilon})\cap C^{1+1}(\overline{\Omega_{\epsilon}})$. Moreover $\Sigma_0^\epsilon$ has hyperbolic mean curvature $\sigma_0$ and $\partial \Sigma_0^\epsilon = \Gamma_\epsilon$. By continuity, $\Sigma^{\epsilon}_0$ is close to the flat domain $D_{\epsilon}$ and for all $\epsilon\geq 0$ the uniform exterior local ball condition is satisfied by $\Sigma^{\epsilon}_0$'s.

With this specific construction of the initial hypersurface, we next give a preliminary $C^0$ estimate for the solution to the AMMCF \eqref{AMMCF}\,.

\begin{lemma} \label{lx-es2} On $\Sigma^{\epsilon}_t$ there holds the height estimate
\begin{equation}\label{lx-es1.0}
u^{\epsilon}(\mathbf{z}, t)<\frac{d(D)}{2}\sqrt{\frac{1-\sigma}{1+\sigma}}+\epsilon\,,\quad (\mathbf{z},t)\in \Omega_{\epsilon}\times [0,T_{\epsilon})\,,
\end{equation}
where $d(D)$ is the Euclidean diameter of $D$ (the flat domain enclosed by $\Gamma$)\,.
\end{lemma}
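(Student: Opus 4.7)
The plan is to apply Lemma \ref{lx-es1}(i) with a suitably chosen equidistance sphere $B_1 = B_R(a', -\sigma R)$ whose cross-section at height $\epsilon$ is a Euclidean disk of radius $d(D)/2$ containing $D_\epsilon$. This will trap the entire flow region $V_\epsilon$ inside $B_1$, and the claimed bound will then fall out as the maximum $x_{n+1}$-coordinate attained on $B_1$.

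First I would fix $a' \in \mathbb{R}^n$ so that the disk $B_{d(D)/2}(a') \subset \mathbb{R}^n$ contains $D$, and choose $R>0$ satisfying $\sqrt{R^2 - (\sigma R + \epsilon)^2} = d(D)/2$, which solves to
\[
R = \frac{\sigma \epsilon + \sqrt{\epsilon^2 + (1-\sigma^2) d(D)^2/4}}{1-\sigma^2}.
\]
By construction $\Gamma_\epsilon \subset D_\epsilon \subset B_1 \cap \{x_{n+1}=\epsilon\}$. Next I would verify $\Sigma_0^\epsilon \subset B_1$: since the ``good'' initial hypersurface from the preceding construction was produced by the implicit function theorem as a small perturbation of the horosphere $D_\epsilon$ (of hyperbolic mean curvature $1$), it sits $C^0$-close to $D_\epsilon$, hence inside $B_1$, provided $\sigma_0$ is chosen sufficiently close to $1$.

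With $\Sigma_0^\epsilon \subset B_1$, Lemma \ref{lx-es1}(i) yields $V_\epsilon \subset B_1$ and so $u^\epsilon \leq R(1-\sigma)$. Using the elementary inequality $\sqrt{\epsilon^2 + (1-\sigma^2)d(D)^2/4} \leq \epsilon + (d(D)/2)\sqrt{1-\sigma^2}$ and simplifying gives
\[
R(1-\sigma) \leq \epsilon + \frac{d(D)}{2}\sqrt{\frac{1-\sigma}{1+\sigma}}.
\]
The strict inequality then follows from the strong maximum principle: $\Sigma_0^\epsilon$ is strictly interior to $B_1$ away from $\Gamma_\epsilon$, and since both $\Sigma_t^\epsilon$ and the static equidistance sphere $S_1 = \partial B_1 \cap \mathbb{H}^{n+1}$ have hyperbolic mean curvature $\sigma$, no interior tangency between them can develop along the flow.

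The main obstacle will be arranging the choice of $a'$ so that $D \subset B_{d(D)/2}(a')$, which in arbitrary dimension requires more than just the bound from Jung's theorem (whose constant is larger than $1/2$ for $n \geq 2$). For the star-shaped domains of interest here, one should be able to exploit star-shapedness by placing $a'$ near the star-center of $D$, or alternatively by employing a family of equidistance sphere barriers indexed by diametral pairs of points on $\partial D$ and intersecting them to trap $V_\epsilon$, at the cost of additional bookkeeping but yielding the same height bound.
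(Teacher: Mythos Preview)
Your approach is essentially identical to the paper's: pick an equidistance ball $B=B_R(a',-\sigma R)$ whose slice at height $\epsilon$ is an $n$-disk of radius $r=d(D)/2$ containing $D_\epsilon$, use continuity (proximity of $\Sigma_0^\epsilon$ to the horosphere $D_\epsilon$ for $\sigma_0$ close to $1$) to get $\Sigma_0^\epsilon\subset B$, invoke Lemma~\ref{lx-es1}(i), and read off $u^\epsilon<(1-\sigma)R$ together with the elementary bound $R\le \bigl(r+\epsilon\,\tfrac{1+\sigma}{\sqrt{1-\sigma^2}}\bigr)/\sqrt{1-\sigma^2}$.

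On the ``main obstacle'' you raise: the paper simply asserts that such a disk of radius $d(D)/2$ containing $D_\epsilon$ exists, without comment. You are right that this is not guaranteed for an arbitrary bounded set (Jung's theorem gives only radius $d(D)\sqrt{n/(2(n+1))}$, and star-shapedness does not help, e.g.\ an equilateral triangle). So this is a shared imprecision, not a flaw in your argument relative to the paper's. It is also harmless for the application: the lemma is only used as a qualitative uniform height bound (see Remark~\ref{remark3} and the paragraph preceding Theorem~\ref{lx-is2}), and replacing $d(D)/2$ by any fixed constant multiple of $d(D)$ works equally well. There is no need for the barrier-family workaround you sketch.

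One minor correction: in your strict-inequality paragraph you write that ``both $\Sigma_t^\epsilon$ and $S_1$ have hyperbolic mean curvature $\sigma$,'' but $\Sigma_t^\epsilon$ is evolving and need not have mean curvature $\sigma$. The strictness comes either directly from $\sqrt{\epsilon^2+(1-\sigma^2)r^2}<\epsilon+r\sqrt{1-\sigma^2}$ for $\epsilon,r>0$, or from the strong maximum principle applied to the parabolic comparison between the flow and the static barrier $S_1$.
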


\begin{proof} Let $B$ be a ball of radius $R$ with center on the
plane $\{x_{n+1}=-\sigma R\}$ such that the $n$-ball $B\cap
\{x_{n+1} = \epsilon \}$ has radius $r=d(D)/2$ and contains $D_\epsilon$. By continuity, we can choose $\sigma_0$ so small that $B$ contains $\Sigma_0^{\epsilon}$ as well. By (i) of Lemma \ref{C0Bound}, $\Sigma^{\epsilon}_t$ is contained in $B\cap\mathbb{H}^{n+1}$ for any $t\in[0,T_{\epsilon})$, and therefore
$$u^{\epsilon}(\mathbf{z}, t) < (1-\sigma)R\,, \quad (\mathbf{z},t)\in \Omega_{\epsilon}\times [0,T_{\epsilon})\,.$$
Moreover, $R^2=(\epsilon+\sigma R)^2+r^2,$ which implies
\begin{equation}
\label{lx-es2.0}
\frac{r}{\sqrt{1-\sigma^2}}+\frac{\sigma}{1-\sigma^2}\epsilon \,\leq \,R\, \leq\, \frac{r}{\sqrt{1-\sigma^2}}+\frac{1+\sigma}{1-\sigma^2}\epsilon\,.
\end{equation}
This completes the proof.
\end{proof}

\begin{remark}\label{remark3}
In particular, on $\Sigma_0^{\epsilon}$ there holds the height estimate
\begin{equation}
u_0^{\epsilon}\,<\,\frac{d(D)}{2}\sqrt{\frac{1-\sigma_0}{1+\sigma_0}}+\epsilon\,.
\end{equation}
See lemma 3.2 of \cite{GS00}\,.
\end{remark}

The only thing left to show is $|\nabla v_0^\epsilon|(\mathbf{z}) \leq C$ for all $\epsilon$ and $\mathbf{z} \in \overline{\Omega_{\epsilon}}$.
The first step is to obtain a good barrier for $\nabla v^{\epsilon}(\cdot, t)$ at any point $\mathbf{z}_0\in\partial\Omega_\epsilon$ corresponding to $P_0 = e^{\phi^{\epsilon}(\mathbf{z}_0)} \mathbf{z}_0\in\Gamma_\epsilon.$ For convenience, we choose a coordinate system around $P_0$ so that the exterior normal to $\Gamma_\epsilon$ at $P_0$ is $\mathbf{e}^\epsilon_1.$ Let $\delta_1>0$ (respectively $\delta_2$) be such that for each point $P\in\Gamma_\epsilon,$ a ball of radius $\delta_1$ (respectively $\delta_2$)
is internally (respectively externally) tangent to $\Gamma_\epsilon$ at $P$. Let $B^{\epsilon}_i = B^{\epsilon}_i(\sigma_0)$, $i=1,2$ be the
(Euclidean) balls of radius $R_i$ centered at $C_i=P_0+(-1)^{i}\delta_i\mathbf{e}^{\epsilon}_1 + (a_i - \epsilon)\mathbf{e}$\,, where
\begin{equation}\label{RADII}
R_i = \frac{-(-1)^i\epsilon \sigma_0 + \sqrt{\epsilon^2+\delta_i^2(1-\sigma_0^2)}}{1-\sigma_0^2}\quad\text{and}\quad a_i=(-1)^i R_i \sigma_0\,.
\end{equation}
Recall that $S^{\epsilon}_1(\sigma_0)=\partial B^{\epsilon}_1\cap\mathbb{H} ^{n+1}$ has constant (hyperbolic) mean curvature $\sigma_0$ with respect to its outward normal while $S^{\epsilon}_2(\sigma_0)=\partial B^{\epsilon}_2\cap\mathbb{H} ^{n+1}$ has constant mean curvature $\sigma_0$ with respect to its inward
normal. Moreover, by our construction, $B^{\epsilon}_1$ and $B^{\epsilon}_2$ are tangent at $P_0$, $B^{\epsilon}_1 \cap \{x_{n+1}=\epsilon\}$ is internally tangent to $\Gamma_{\epsilon}$ at $P_0$, and $B^{\epsilon}_2 \cap \{x_{n+1}=\epsilon\}$ is externally tangent to $\Gamma_{\epsilon}$ at $P_0$.

\begin{lemma}\label{C0Bound2}
Locally $S^{\epsilon}_1(\sigma_0)$ is interior to $\Sigma_0^{\epsilon}(\sigma_0)$ and $S^{\epsilon}_2$ is exterior to $\Sigma_0^{\epsilon}$\,.
\end{lemma}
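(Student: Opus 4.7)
The statement is a local barrier inequality, directly analogous to Lemma \ref{C0Bound3} and Lemma \ref{C0Bound5} but phrased for the stationary initial hypersurface $\Sigma_0^\epsilon$. My plan is to prove it by combining a continuous sliding family of hyperbolic isometries with the strong maximum principle for the constant mean curvature equation, exactly as in the proofs of Lemma \ref{lx-es1} and Lemma \ref{C0Bound5}. The key observation is that $\Sigma_0^\epsilon$ was constructed via the implicit function theorem applied to \eqref{lx-is1.0} to have constant hyperbolic mean curvature $\sigma_0$, while $S_1^\epsilon(\sigma_0)=\partial B_1^\epsilon\cap\mathbb{H}^{n+1}$ and $S_2^\epsilon(\sigma_0)=\partial B_2^\epsilon\cap\mathbb{H}^{n+1}$ also have constant hyperbolic mean curvature $\sigma_0$, with respect to their outward and inward normals respectively, by the very choice of centers and radii in \eqref{RADII}. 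Near $P_0$ all three hypersurfaces are radial graphs, their height functions satisfy the same quasilinear elliptic CMC equation \eqref{lx-is1.0} with $\sigma$ replaced by $\sigma_0$, and the difference of any two such heights therefore satisfies a linear uniformly elliptic equation to which the strong maximum principle and the Hopf boundary lemma both apply.

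Next I would slide $B_1^\epsilon$ by continuous homothetic dilations centered at the asymptotic point $(a_1',0)\in\partial_\infty\mathbb{H}^{n+1}$; these are hyperbolic isometries, so every dilated sphere still has constant hyperbolic mean curvature $\sigma_0$. Starting from a large enough contraction so that the dilated $B_1^\epsilon\cap\mathbb{H}^{n+1}$ is contained in the bounded region enclosed by $\Sigma_0^\epsilon$ and the plane $\{x_{n+1}=\epsilon\}$ (which is possible because the spheres concentrate toward $(a_1',0)$), I would expand continuously back to the original $B_1^\epsilon$. If during this expansion the sphere first touched $\Sigma_0^\epsilon$ at an interior point $Q\neq P_0$, then at $Q$ the two smooth hypersurfaces would be tangent with the same outward normal and the same constant mean curvature $\sigma_0$; the strong maximum principle would then force them to coincide in a neighborhood of $Q$ and, by real-analyticity (or just unique continuation for the CMC equation), globally, contradicting the fact that $\Sigma_0^\epsilon$ has asymptotic boundary $\Gamma_\epsilon$ while the dilated sphere meets $\{x_{n+1}=\epsilon\}$ in a different circle. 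Hence the only contact point throughout the deformation is $P_0$ itself, which gives the local interior barrier claim for $S_1^\epsilon$. The statement for $S_2^\epsilon$ follows by the mirror argument, starting from a very large dilate of $B_2^\epsilon$ lying entirely outside $\Sigma_0^\epsilon$ and contracting via dilations centered at $(b',0)$.

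The main obstacle I expect is the delicate handling of the tangential contact at the boundary point $P_0$, where $\Sigma_0^\epsilon$, $S_1^\epsilon$, $S_2^\epsilon$ all pass through $P_0$ and share the same tangent plane to $\Gamma_\epsilon$ there by construction. To rule out a degenerate higher-order contact that would allow an interior crossing to develop near $P_0$ without producing a separate interior touching point (which is what the sliding argument rules out), one needs the Hopf boundary lemma: the difference of the two height functions is non-negative near $\mathbf{z}_0$ in the appropriate region of $\partial\Omega_\epsilon$ (thanks to the internal/external tangency of $B_i^\epsilon\cap\{x_{n+1}=\epsilon\}$ with $\Gamma_\epsilon$) and satisfies a linear elliptic equation, so the Hopf lemma yields strict positivity in any punctured neighborhood of $\mathbf{z}_0$ unless the two surfaces are identical in a neighborhood. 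Combined with the sliding-maximum-principle argument from the previous paragraph, this completes the proof that locally $S_1^\epsilon(\sigma_0)$ is interior to $\Sigma_0^\epsilon$ and $S_2^\epsilon(\sigma_0)$ is exterior to $\Sigma_0^\epsilon$.
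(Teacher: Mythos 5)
Your argument is correct and is exactly the fleshed-out version of the paper's one-line proof, which simply invokes the maximum principle for the CMC equation \eqref{lx-is1.0}; the paper implicitly relies on the same sliding-by-hyperbolic-isometries and tangency-principle argument that appears explicitly in its proofs of Lemma \ref{C0Bound} and Lemma \ref{C0Bound5}. The appeal to the Hopf boundary lemma is not strictly necessary — once the sliding argument rules out interior first-contact for $\lambda<1$, passing to the limit already yields the weak inequality between the radial heights near $P_0$, which is all the lemma asserts — but including it does no harm.
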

\begin{proof}
This follows from the maximum principle for the equation \eqref{lx-HS1.0}\,.
\end{proof}

Similar to equation \eqref{C0Bound1}, we see that $S^{\epsilon}_1(\sigma_0)$ and $S^{\epsilon}_2(\sigma_0)$ serve as good local barriers of $\Sigma^{\epsilon}_0$ around $P_0$ and we obtain that
\begin{equation}\label{UNIF}
|\nabla v^{\epsilon}_0|(P_0) \leq C\,,
\end{equation}
where $C$ is independent of $\epsilon$ and $P_0\in \Gamma_{\epsilon}$\,.

The next step is to obtain the uniform interior gradient bound for $v^{\epsilon}_0$ and one observes that we only need to bound
$$ \mathbf{X}^{\epsilon}_0\cdot \nu^{\epsilon}_E\,=\,\frac{e^{v^{\epsilon}_0}}{\sqrt{1+|\nabla v^{\epsilon}_0|^2}}$$
from below uniformly in $\epsilon$\,. This can be done as follows. Firstly note that since $D_{\epsilon}$ is a vertical graph over $D$ and by continuity (induced from the implicit function theorem used in the construction of $\Sigma^{\epsilon}_0$), $\Sigma_0^\epsilon$ is a vertical graph of the function $u^{\epsilon}_0$ over $D$ as well. And similar to Lemma \ref{lx-es2}, we have another height estimate for vertical graphs.
\begin{lemma}\cite[lemma 3.5]{GS00} \label{lx-is1.01}
On $\Sigma^{\epsilon}_0$ there holds
\begin{equation}
u^{\epsilon}(x')\,\geq\,d(x')\sqrt{\frac{1-\sigma_0}{1+\sigma_0}} + \frac{\sigma_0 \epsilon}{1+\sigma_0}\,, \quad x'\in D
\end{equation}
where $d(x')$ is the distance from $x'$ to $\partial D$\,.
\end{lemma}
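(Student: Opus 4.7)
The plan is to apply the stationary comparison principle (Lemma \ref{C0Bound5}(ii), with $\sigma$ replaced by $\sigma_0$) using an equidistance sphere that sits directly below $\Sigma_0^{\epsilon}$ over each point $x' \in D$. Recall that $\Sigma_0^{\epsilon}$ has constant hyperbolic mean curvature $\sigma_0$ with $\partial \Sigma_0^{\epsilon} = \Gamma_{\epsilon}$ by its construction via the implicit function theorem, and it is a vertical graph of $u^{\epsilon}$ over $D$ lying in $\{x_{n+1} \geq \epsilon\}$.

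Fix $x' \in D$ and set $d = d(x')$. I would consider the Euclidean ball $B = B_R(x', -\sigma_0 R)$ centered on the vertical axis through $x'$, with $R > 0$ chosen so that the horizontal slice $B \cap \{x_{n+1} = \epsilon\}$ is a disk of radius exactly $d$ centered at $(x', \epsilon)$. Solving $R^2 - (\epsilon + \sigma_0 R)^2 = d^2$ gives explicitly
$$R \,=\, \frac{\epsilon\sigma_0 + \sqrt{\epsilon^2 + (1-\sigma_0^2)\,d^2}}{1 - \sigma_0^2}.$$
By the definition of $d(x')$ this slice is contained in $D_{\epsilon}$, and $S = \partial B \cap \mathbb{H}^{n+1}$ is an equidistance sphere of constant hyperbolic mean curvature $\sigma_0$ with respect to its outward normal.

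Applying Lemma \ref{C0Bound5}(ii) (with $\sigma_0$ in place of $\sigma$) then yields $B \cap \Sigma_0^{\epsilon} = \emptyset$. The vertical ray $\{(x', h) : h \geq \epsilon\}$ enters the interior of $B$ at $(x', \epsilon)$ and exits at the top point $(x', R(1-\sigma_0))$. Since this ray meets $\Sigma_0^{\epsilon}$ at $(x', u^{\epsilon}(x'))$ and cannot do so inside $B$, we conclude $u^{\epsilon}(x') \geq R(1-\sigma_0)$.

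The claimed estimate then follows from
$$R(1-\sigma_0)\,=\,\frac{\epsilon\sigma_0 + \sqrt{\epsilon^2 + (1-\sigma_0^2)d^2}}{1+\sigma_0}\,\geq\,\frac{\sigma_0 \epsilon}{1+\sigma_0} + d(x') \sqrt{\frac{1-\sigma_0}{1+\sigma_0}}$$
via the trivial inequality $\sqrt{\epsilon^2 + (1-\sigma_0^2)d^2} \geq d\sqrt{1-\sigma_0^2}$. The only mild subtlety is confirming the correct side of the barrier: $\Sigma_0^{\epsilon}$ must lie above $S$ rather than below it along the vertical axis through $x'$. This is guaranteed by the fact that $\Gamma_{\epsilon}$ lies in $\{x_{n+1}=\epsilon\}$ outside the slice disk $B \cap \{x_{n+1}=\epsilon\}$, so by connectedness $\Sigma_0^{\epsilon}$ remains in the exterior of $B$ within $\{x_{n+1} \geq \epsilon\}$, forcing the first crossing of the vertical ray with $\Sigma_0^{\epsilon}$ to occur at height at least $R(1-\sigma_0)$.
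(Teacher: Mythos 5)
Your proof is correct and follows the standard equidistance-sphere barrier argument that the paper (and \cite{GS00}, where this lemma is proved as lemma 3.5) uses throughout; the paper itself just cites the reference rather than reproving it. The computation of $R$, the application of Lemma \ref{C0Bound5}(ii), and the final inequality $\sqrt{\epsilon^2+(1-\sigma_0^2)d^2}\geq d\sqrt{1-\sigma_0^2}$ all check out, and your remark about $\Sigma_0^{\epsilon}$ lying above the cap is justified by $u^{\epsilon}\geq\epsilon$, which holds since $\Sigma_0^\epsilon \subset \{x_{n+1}\geq\epsilon\}$ by construction.
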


Moreover, there exists $\epsilon_1>0$ such that, for any $\sigma_0 \in [1-\epsilon_1, 1)$, there exists $\delta_1 = \delta_1(\epsilon_1)$ so that in the $\delta_1$-neighborhood of $\Gamma_{\epsilon}$ in $D_{\epsilon}$ one has $|\nabla v^{\epsilon}_0| \leq \frac{C}{2}$, where $C$ is the uniform gradient bound of $v^{\epsilon}_0$ on $\Gamma_{\epsilon}$ as in equation \eqref{UNIF}. Away from the $\delta_1$-neighborhood, by Lemma \ref{lx-is1.01}
\begin{align*}
\mathbf{X}^{\epsilon}_0\cdot \nu^{\epsilon}_E \,
=&\, \mathbf{X}^{\epsilon}_0\cdot \mathbf{e} - \mathbf{X}^{\epsilon}_0\cdot (\mathbf{e} - \nu^{\epsilon}_E)\\
\geq &\, \delta_1\sqrt{\frac{1-\sigma_0}{1+\sigma_0}} - e^{v^{\epsilon}_0}\sqrt{2-\frac{2}{\sqrt{1+|\widetilde{\nabla}u^{\epsilon}_0|^2}}}\,,
\end{align*}
where $\widetilde{\nabla}$ is the Levi-Civita connection on $\mathbb{R}^{n+1}$ and we used that
$$\nu^{\epsilon}_E \,=\, (\frac{-\widetilde{\nabla} u^{\epsilon}_0}{\sqrt{1+|\widetilde{\nabla}u^{\epsilon}_0|^2}}, \frac{1}{\sqrt{1+|\widetilde{\nabla}u^{\epsilon}_0|^2}})$$
since $\Sigma_0^{\epsilon}$ is a vertical graph.

Now using the fact that $H^{\epsilon}_E$ is subharmonic on the constant mean curvature hypersurface $\Sigma_0^{\epsilon}$ (see Theorem 2.2 of \cite{GS00}), we have
\begin{lemma}\cite[corollary 2.3]{GS00} \label{C0Bound4}
 For any $\lambda \in (0,1)$\,,
 \begin{equation}
 \sqrt{1+|\widetilde{\nabla} u^{\epsilon}_0|^2}\,\leq\,\frac{1}{(1-\lambda) \sigma_0}\quad \text{in }\, \Omega_{\lambda}\,,
 \end{equation}
 where $\Omega_{\lambda} \,=\, \left\{x\in D : u^{\epsilon}_0 \leq \frac{\lambda \sigma_0}{\sup_{\Gamma_{\epsilon}} H^{\epsilon}_E}\right\}$\,.
 \end{lemma}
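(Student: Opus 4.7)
The plan is to derive the bound directly from the formula relating hyperbolic and Euclidean mean curvatures combined with the subharmonicity of $H_E^{\epsilon}$. Since $\Sigma_0^{\epsilon}$ is a vertical graph of $u_0^{\epsilon}$ over $D$ with upward Euclidean unit normal
\[
\nu_E^{\epsilon}\,=\,\Bigl(\tfrac{-\widetilde{\nabla}u_0^{\epsilon}}{\sqrt{1+|\widetilde{\nabla}u_0^{\epsilon}|^2}},\,\tfrac{1}{\sqrt{1+|\widetilde{\nabla}u_0^{\epsilon}|^2}}\Bigr),
\]
the relation $H=\mathbf{e}\cdot\nu_E+u H_E$ (recorded just above equation \eqref{lx-HS1.0}), together with $H\equiv\sigma_0$ on $\Sigma_0^{\epsilon}$, gives
\[
\sigma_0\,=\,\frac{1}{\sqrt{1+|\widetilde{\nabla}u_0^{\epsilon}|^2}}\,+\,u_0^{\epsilon}\,H_E^{\epsilon}.
\]
Equivalently,
\[
\sqrt{1+|\widetilde{\nabla}u_0^{\epsilon}|^2}\,=\,\frac{1}{\sigma_0-u_0^{\epsilon}H_E^{\epsilon}},
\]
so proving the lemma reduces to bounding $u_0^{\epsilon}H_E^{\epsilon}$ from above by $\lambda\sigma_0$ on $\Omega_\lambda$.

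The key input here is Theorem 2.2 of \cite{GS00}, which asserts that $H_E^{\epsilon}$ is subharmonic on the constant hyperbolic mean curvature hypersurface $\Sigma_0^{\epsilon}$. The weak maximum principle then yields
\[
H_E^{\epsilon}\,\leq\,\sup_{\Gamma_{\epsilon}} H_E^{\epsilon}\quad\text{on }\Sigma_0^{\epsilon}.
\]
On $\Omega_\lambda$, where by definition $u_0^{\epsilon}\leq \lambda\sigma_0/\sup_{\Gamma_{\epsilon}}H_E^{\epsilon}$, we multiply the two inequalities (using $u_0^{\epsilon}>0$ and noting that if $H_E^{\epsilon}\leq 0$ at a point then trivially $u_0^{\epsilon}H_E^{\epsilon}\leq 0\leq\lambda\sigma_0$) to obtain
\[
u_0^{\epsilon}\,H_E^{\epsilon}\,\leq\,\lambda\,\sigma_0\qquad\text{on }\Omega_\lambda.
\]
Substituting back yields $\sigma_0-u_0^{\epsilon}H_E^{\epsilon}\geq(1-\lambda)\sigma_0$, and hence
\[
\sqrt{1+|\widetilde{\nabla}u_0^{\epsilon}|^2}\,\leq\,\frac{1}{(1-\lambda)\sigma_0}\qquad\text{on }\Omega_\lambda,
\]
which is the claimed inequality.

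The proof is essentially algebraic once the subharmonicity input is available; the only subtle point is making sure the sign conventions for $\nu_E^{\epsilon}$ and the formula $H=\mathbf{e}\cdot\nu_E+uH_E$ are consistent with the upward normal used for the vertical graph. Since the argument splits automatically into the (trivial) region where $H_E^{\epsilon}\leq 0$ and the region where $H_E^{\epsilon}>0$, no additional sign hypothesis on $H_E^{\epsilon}$ is needed. Thus the main obstacle is not in this proof itself but in verifying the subharmonicity of $H_E^{\epsilon}$ from \cite{GS00}, which is quoted and not reproved here.
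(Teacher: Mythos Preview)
Your proof is correct and is exactly the argument underlying \cite[Corollary 2.3]{GS00}; note that the present paper does not give its own proof of this lemma but simply imports it, pointing to the subharmonicity of $H_E^{\epsilon}$ (Theorem 2.2 of \cite{GS00}) as the key input. Your derivation from the identity $\sigma_0=\mathbf{e}\cdot\nu_E^{\epsilon}+u_0^{\epsilon}H_E^{\epsilon}$ together with the maximum principle for $H_E^{\epsilon}$ is precisely that argument.
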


To make use of Lemma \ref{C0Bound4}, we also need the following estimate on the Euclidean mean curvature $H^{\epsilon}_E$ of $\Sigma_0^{\epsilon}$ on $\partial \Sigma_0^{\epsilon} = \Gamma_{\epsilon}$. For $x\in \partial D = \Gamma$, denote by $r_1(x)$ and $r_2(x)$ the radius of the largest exterior and interior spheres to $\partial D$ at $x$, respectively, and let $r_1 = \min_{x\in \partial D} r_1(x), r_2 = \min_{x\in \partial D} r_2(x)$. Then we have
\begin{lemma} \cite[lemma 3.3]{GS00} \label{lx-is1}
For $\epsilon>0$ sufficiently small,
$$
-\frac{\sqrt{1-\sigma_0^2}}{r_2}-\frac{\epsilon(1-\sigma_0)}{r_2^2}< \frac{\sigma_0-\mathbf{e} \cdot \nu^{\epsilon}_{E}}{u}=H^{\epsilon}_E<\frac{\sqrt{1-\sigma_0^2}}{r_1}+\frac{\epsilon(1+\sigma_0)}{r_1^2} \quad \text{on }\,\Gamma_{\epsilon}\,.
$$
In particular, $\mathbf{e} \cdot \nu^{\epsilon}_{E}\rightarrow\sigma_0$ on $\Gamma_{\epsilon}$ as $\epsilon\rightarrow 0,$ provided that $\partial D$ is $C^{1+1}.$
\end{lemma}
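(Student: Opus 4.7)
The equality $H_E^\epsilon = (\sigma_0 - \mathbf{e}\cdot\nu_E^\epsilon)/u$ on $\Gamma_\epsilon$ will be immediate from $H^\epsilon = \sigma_0$ together with the relation $H = \mathbf{e}\cdot\nu_E + uH_E$ recorded in Section~\ref{lx-HS}. Hence the substantive content of the lemma is the pair of outer bounds on $H_E^\epsilon$. The plan is to first reduce these to
\begin{equation*}
-\frac{1}{R_1} \,\leq\, H_E^\epsilon(P_0) \,\leq\, \frac{1}{R_2} \quad\text{for each } P_0 \in \Gamma_\epsilon,
\end{equation*}
and then derive the stated form by an elementary algebraic manipulation of \eqref{RADII}.

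For the displayed inequality, I would use the equidistance-sphere barriers $S_1^\epsilon, S_2^\epsilon$. Fixing $\mathbf{z}_0 \in \partial\Omega_\epsilon$ corresponding to $P_0$ and letting $\varphi_1^\epsilon, \varphi_2^\epsilon$ denote the local radial-graph representations of $S_1^\epsilon, S_2^\epsilon$ near $\mathbf{z}_0$, Lemma~\ref{C0Bound2} gives $\varphi_1^\epsilon \leq v_0^\epsilon \leq \varphi_2^\epsilon$ in a neighborhood of $\mathbf{z}_0$ in $\overline{\Omega_\epsilon}$, with equality at $\mathbf{z}_0$. A standard Hopf-type boundary argument (each of $v_0^\epsilon - \varphi_1^\epsilon$ and $\varphi_2^\epsilon - v_0^\epsilon$ attains its minimum $0$ at the boundary point $\mathbf{z}_0$) then yields
\begin{equation*}
\partial_\nu \varphi_2^\epsilon(\mathbf{z}_0) \,\leq\, \partial_\nu v_0^\epsilon(\mathbf{z}_0) \,\leq\, \partial_\nu \varphi_1^\epsilon(\mathbf{z}_0),
\end{equation*}
where $\nu$ is the outward unit normal of $\partial\Omega_\epsilon$ in $\mathbb{S}^n$. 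Since $v_0^\epsilon, \varphi_1^\epsilon, \varphi_2^\epsilon$ all coincide with $\phi^\epsilon$ on $\partial\Omega_\epsilon$, their tangential gradients at $\mathbf{z}_0$ agree, so the three gradients differ only in the $\hat\nu$-direction. Because $\mathbf{e}\cdot\nu_E = (y - \mathbf{e}\cdot\nabla v)/w$ depends only on $\nabla v(\mathbf{z}_0)$, this normal-derivative sandwich will transfer --- via monotonicity of the map $p \mapsto (y - \mathbf{e}\cdot\nabla^T\phi^\epsilon - (\mathbf{e}\cdot\hat\nu)p)/\sqrt{1 + |\nabla^T\phi^\epsilon|^2 + p^2}$ in the relevant range --- to a sandwich on $\mathbf{e}\cdot\nu_E^\epsilon(P_0)$, and hence on $H_E^\epsilon(P_0)$, between $H_E^{\varphi_1}(P_0) = -1/R_1$ and $H_E^{\varphi_2}(P_0) = 1/R_2$; these last two values are read off directly from the sphere centers $C_1, C_2$ via the identity of the first paragraph.

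To conclude, I would rationalize \eqref{RADII} to obtain $1/R_i = \bigl(\sqrt{\epsilon^2 + (1-\sigma_0^2)\delta_i^2} + (-1)^i\sigma_0\epsilon\bigr)/(\epsilon^2 + \delta_i^2)$, apply the elementary estimate $\sqrt{\epsilon^2 + (1-\sigma_0^2)\delta_i^2} \leq \delta_i\sqrt{1-\sigma_0^2} + \epsilon$, and use $\delta_1 = r_2$, $\delta_2 = r_1$ (which hold because $\Gamma_\epsilon$ is the vertical lift of $\Gamma$) to produce the inequalities claimed, with strictness coming from $\epsilon > 0$ and $\sigma_0 < 1$. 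The limiting assertion $\mathbf{e}\cdot\nu_E^\epsilon \to \sigma_0$ on $\Gamma_\epsilon$ as $\epsilon \to 0$ will then follow from $\mathbf{e}\cdot\nu_E^\epsilon = \sigma_0 - \epsilon H_E^\epsilon$ and the $\epsilon$-uniform boundedness of $H_E^\epsilon$ guaranteed by $r_1, r_2 > 0$ (itself a consequence of the $C^{1+1}$ regularity of $\partial D$). The trickiest step will be verifying the monotonicity in the transfer from normal derivatives to $\mathbf{e}\cdot\nu_E$; a cleaner alternative that I would likely adopt is to write $\Sigma_0^\epsilon, S_1^\epsilon, S_2^\epsilon$ as vertical graphs over the common tangent plane at $P_0$ and compare Euclidean second fundamental forms there, after which the two-sided bound on $H_E^\epsilon(P_0)$ becomes geometrically immediate from the inclusions in Lemma~\ref{C0Bound2}.
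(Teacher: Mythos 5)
Your approach — use the equidistance-sphere barriers $S_1^\epsilon,S_2^\epsilon$ to bound $\mathbf{e}\cdot\nu_E^\epsilon$ at a boundary point, convert to a bound on $H_E^\epsilon$ via the identity $H_E^\epsilon=(\sigma_0-\mathbf{e}\cdot\nu_E^\epsilon)/u$ coming from $H^\epsilon\equiv\sigma_0$, and then unwind \eqref{RADII} algebraically — is the right route and presumably what \cite[Lemma 3.3]{GS00} does (the paper itself only cites the result). The algebraic tail is correct: rationalizing \eqref{RADII} and using $\sqrt{\epsilon^2+(1-\sigma_0^2)\delta_i^2}\le \epsilon+\delta_i\sqrt{1-\sigma_0^2}$ and $\epsilon^2+\delta_i^2>\delta_i^2$ yields exactly the stated strict bounds with $\delta_1=r_2,\delta_2=r_1$.

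However, there are two real issues in the middle of the argument. First, the assertion that ``$v_0^\epsilon,\varphi_1^\epsilon,\varphi_2^\epsilon$ all coincide with $\phi^\epsilon$ on $\partial\Omega_\epsilon$'' is false: the spheres $S_i^\epsilon$ meet $\Gamma_\epsilon$ only at $P_0$, not along all of $\partial\Omega_\epsilon$. What is true, and is what you actually need, is that the one-sided touching $\varphi_1^\epsilon\le\phi^\epsilon\le\varphi_2^\epsilon$ on $\partial\Omega_\epsilon$, with equality at $\mathbf{z}_0$, forces the tangential gradients to agree there. Second, and more seriously, the ``cleaner alternative'' you say you would likely adopt does not work: $S_1^\epsilon$, $S_2^\epsilon$, and $\Sigma_0^\epsilon$ do \emph{not} share a tangent plane at $P_0$ when $\epsilon>0$. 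A direct computation from the centers $C_i$ in \eqref{RADII} gives $\mathbf{e}\cdot\nu_E^{S_1}(P_0)=\sigma_0+\epsilon/R_1$ and $\mathbf{e}\cdot\nu_E^{S_2}(P_0)=\sigma_0-\epsilon/R_2$, which are different, so a pointwise second-fundamental-form comparison at $P_0$ is not available (and the paper's phrase ``$B_1^\epsilon$ and $B_2^\epsilon$ are tangent at $P_0$'' should be read as ``touch at $P_0$''). This is in fact the whole point: the lemma is a first-order (gradient/normal) comparison, not a curvature comparison, and the identity $H_E^\epsilon=(\sigma_0-\mathbf{e}\cdot\nu_E^\epsilon)/u$ is precisely what converts the first-order sandwich into the stated bound on $H_E^\epsilon$. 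For the remaining monotonicity step, the cleanest verification is to represent all three hypersurfaces locally as vertical graphs $u=u(x')$ over $\{x_{n+1}=0\}$ near $P_0$: the radial sandwich gives $u_{S_1}\le u_{\Sigma_0^\epsilon}\le u_{S_2}$ on the $D$-side with equality at $P_0$, the tangential derivatives along $\Gamma_\epsilon$ all vanish (since $u\equiv\epsilon$ on $\Gamma_\epsilon$), and $\mathbf{e}\cdot\nu_E=(1+|\nabla' u|^2)^{-1/2}$, so the one-sided comparison of the inward slopes $\partial_{-\mathbf{e}_1}u$ transfers directly to $\mathbf{e}\cdot\nu_E^{S_2}\le\mathbf{e}\cdot\nu_E^\epsilon\le\mathbf{e}\cdot\nu_E^{S_1}$, i.e.\ $-1/R_1\le H_E^\epsilon(P_0)\le 1/R_2$, without any delicate monotonicity analysis of the map $p\mapsto(y-\mathbf{e}\cdot\nabla^T\phi^\epsilon-(\mathbf{e}\cdot\hat\nu)p)/w$.
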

Combing the estimates in Remark \ref{remark3} and Lemmas \ref{C0Bound4}, \ref{lx-is1}, we can choose $\sigma_0$ sufficiently close to $1$ (for fixed $\epsilon_1$) such that
$$ \mathbf{X}^{\epsilon}_0\cdot \nu^{\epsilon}_E \, \geq\, \min \left\{\frac{C}{2}, \frac{\delta_1}{2}\sqrt{\frac{1-\sigma_0}{1+\sigma_0}}\right\}\quad \text{uniformly in \,}\,\epsilon\,.$$

Now we can conclude
\begin{theorem} \label{lx-is2}
There exist constants $\epsilon_0>0$ and $\sigma_0 \in (0,1)\cap [\sigma,1)$ that is sufficiently close to $1$ such that for all $0\leq\epsilon\leq \epsilon_0$, there exists a smooth hypersurface $\Sigma^\epsilon_0$ with $\partial \Sigma^\epsilon_0 = \Gamma_{\epsilon} \subset \{x_{n+1}=\epsilon\}$ and whose hyperbolic mean curvature is $\sigma_0.$ Additionally, $\Sigma^\epsilon_0$ can be represented as a radial graph of a function $e^{v^{\epsilon}_0}$ over $\Omega_{\epsilon} \subset \mathbb{S}^{n}_+$ and
\begin{equation}\label{lx-is5.0}
|\nabla v_0^\epsilon|(\mathbf{z}) \leq C\,,\quad \mathbf{z}\in \overline{\Omega_{\epsilon}}\,,
\end{equation}
where $C$ is a constant independent of $\epsilon$. Moreover, the $\Sigma^\epsilon_0$'s satisfy the uniform exterior local ball condition.
\end{theorem}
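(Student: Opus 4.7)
The plan is to construct $\Sigma_0^\epsilon$ by perturbing off the horosphere $D_\epsilon \subset \{x_{n+1}=\epsilon\}$ via the implicit function theorem applied to the equation \eqref{lx-is1.0}, and then to assemble the gradient bound from the local ball barriers near $\Gamma_\epsilon$ together with an interior estimate derived from the height lower bound in Lemma \ref{lx-is1.01} and the subharmonicity-based bound in Lemma \ref{C0Bound4}.

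First I would fix an arbitrary $\sigma \in [0,1)$. At $\sigma_0 = 1$ the flat domain $D_\epsilon$ solves \eqref{lx-is1.0} (as a ``horosphere''), and the linearization of \eqref{lx-is1.0} at this solution is a uniformly elliptic operator with Dirichlet condition on $\partial \Omega_\epsilon$; a standard implicit function theorem argument yields, for each $\epsilon$ sufficiently small, a threshold $\sigma_0(\epsilon) \in [0,1)$ and a solution $v_0^\epsilon \in C^\infty(\Omega_\epsilon) \cap C^{1+1}(\overline{\Omega_\epsilon})$ of \eqref{lx-is1.0} with constant hyperbolic mean curvature $\sigma_0$ and $v_0^\epsilon = \phi^\epsilon$ on $\partial \Omega_\epsilon$. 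Choosing $\sigma_0 \in [\sigma,1)$ sufficiently close to $1$ (and then shrinking $\epsilon_0$) one also gets, by continuity, that $\Sigma_0^\epsilon$ is $C^1$-close to $D_\epsilon$ uniformly in $\epsilon \leq \epsilon_0$, so in particular the exterior spheres $B_{R_2}^\epsilon$ constructed with radii \eqref{RADII} remain externally tangent and disjoint from $\Sigma_0^\epsilon$ in a local neighborhood of each $P \in \Gamma_\epsilon$, giving the uniform exterior local ball condition.

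Next I would derive the boundary gradient bound. At any point $P_0 \in \Gamma_\epsilon$, the spheres $S_1^\epsilon(\sigma_0)$ and $S_2^\epsilon(\sigma_0)$ constructed in the paragraph preceding Lemma \ref{C0Bound2} have constant hyperbolic mean curvature $\sigma_0$ with respect to their outward/inward normals, they agree with $\Sigma_0^\epsilon$ at $P_0$ and tangentially at $\Gamma_\epsilon$, and by Lemma \ref{C0Bound2} they are locally interior/exterior to $\Sigma_0^\epsilon$. Since all three are radial graphs in a uniform neighborhood of $P_0$, and since the radii \eqref{RADII} are uniformly bounded above and below for $\epsilon$ small, the local Lipschitz bounds of the sphere graphs depend only on the star-shapedness of $\Gamma$. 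This gives the boundary estimate
\begin{equation}
|\nabla v_0^\epsilon|(P_0) \leq C
\end{equation}
uniformly in $\epsilon$ and $P_0 \in \Gamma_\epsilon$, via the same maximum principle comparison used to obtain \eqref{GBound}.

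For the interior gradient bound, note that bounding $|\nabla v_0^\epsilon|$ away from $\partial \Omega_\epsilon$ is equivalent to bounding $\mathbf{X}_0^\epsilon \cdot \nu_E^\epsilon = e^{v_0^\epsilon}/\sqrt{1+|\nabla v_0^\epsilon|^2}$ from below. Because $\Sigma_0^\epsilon$ is close to $D_\epsilon$, it can be represented as a vertical graph $u_0^\epsilon$ over $D$, and the boundary estimate above translates into $|\widetilde{\nabla} u_0^\epsilon| \leq C/2$ in a $\delta_1$-neighborhood of $\Gamma_\epsilon$ in $D_\epsilon$ for some $\delta_1 = \delta_1(\epsilon_1)$ when $\sigma_0 \in [1-\epsilon_1, 1)$. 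Away from this neighborhood, Lemma \ref{lx-is1.01} gives
\begin{equation}
\mathbf{X}_0^\epsilon \cdot \nu_E^\epsilon \geq \delta_1 \sqrt{\tfrac{1-\sigma_0}{1+\sigma_0}} - e^{v_0^\epsilon} \sqrt{2 - \tfrac{2}{\sqrt{1+|\widetilde{\nabla} u_0^\epsilon|^2}}},
\end{equation}
and Lemma \ref{C0Bound4} together with the Euclidean mean curvature bound of Lemma \ref{lx-is1} on $\Gamma_\epsilon$ control $\sqrt{1+|\widetilde{\nabla} u_0^\epsilon|^2}$ on the sublevel set $\Omega_\lambda$, which by Remark \ref{remark3} contains the complement of the $\delta_1$-neighborhood once $\sigma_0$ is taken close enough to $1$. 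Combining these, I can pick $\sigma_0$ so that the lower bound for $\mathbf{X}_0^\epsilon \cdot \nu_E^\epsilon$ is uniform in $\epsilon$, giving \eqref{lx-is5.0}. The main obstacle will be the interior estimate: it requires a delicate balancing of $\sigma_0 \to 1$ (to apply the implicit function theorem and Lemma \ref{C0Bound4}) against $\sigma_0$ being bounded away from $1$ (so that the factor $\sqrt{(1-\sigma_0)/(1+\sigma_0)}$ in Lemma \ref{lx-is1.01} remains usefully large). One chooses $\epsilon_1$ first, then $\delta_1(\epsilon_1)$, and finally $\sigma_0$ close enough to $1$ so that the $C/2$ boundary gradient bound and the interior lower bound coexist, at which point $\epsilon_0$ is fixed and the theorem follows.
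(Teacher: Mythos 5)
Your proposal is correct and takes essentially the same route as the paper: implicit function theorem off the horosphere $D_\epsilon$ at $\sigma_0=1$ to manufacture $\Sigma_0^\epsilon$ of constant mean curvature $\sigma_0$, $C^1$-closeness to $D_\epsilon$ for the uniform exterior local ball condition, the barrier spheres $S_1^\epsilon(\sigma_0)$, $S_2^\epsilon(\sigma_0)$ for the boundary gradient bound, and then the vertical-graph representation with Lemmas \ref{lx-is1.01}, \ref{C0Bound4}, \ref{lx-is1} and Remark \ref{remark3} for the interior lower bound on $\mathbf{X}_0^\epsilon\cdot\nu_E^\epsilon$. Your closing observation about the order in which $\epsilon_1$, $\delta_1$, $\sigma_0$ and then $\epsilon_0$ are chosen is exactly the bookkeeping the paper performs implicitly in its final display.
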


\section{Interior gradient bounds and continuous boundary data}\label{IG1}

\subsection{Interior gradient bounds} We will next provide a version of {\em a priori} interior gradient estimate for the regular solution to the MMCF \eqref{MMCF1}, which is essential for the existence result of the MMCF with less regular (e.g. continuous) boundary data.

\begin{lemma}\label{IG}
Let $v$ be a $C^{3,\frac{3}{2}}$ function satisfying equation \eqref{MMCF1} in $B_{\rho}(P)\times (0,2T)$ for some $T>0$, where $B_{\rho}(P)\subset \{y\geq \varepsilon\}$\,. Then
$$\sqrt{1+|\nabla v|^2}(P,T)\,=\,w(P,T) \,\leq\, C_1 e^{\frac{C_2}{\rho^2}}\,,$$
where $C_1, C_2$ are non-negative constants depending only on $n, \sigma, \varepsilon, T$ and $\|v\|_{L^{\infty}}$\,.
\end{lemma}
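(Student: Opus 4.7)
The plan is to apply the parabolic maximum principle to a carefully chosen auxiliary function built from $w$, a spatial cutoff, a temporal cutoff, and an exponential factor $e^{A v}$, exploiting the key differential inequality $(\partial_t - L)w \leq 2w$ from Lemma \ref{lx-lt1}. Since $B_{\rho}(P) \subset \{y \geq \varepsilon\}$, the operator $L$ is uniformly elliptic in $B_{\rho}(P)\times(0,2T)$ (with ellipticity constants depending on $\varepsilon$ and $w$), so this is essentially a standard interior Bernstein-type estimate for a quasilinear uniformly parabolic equation.

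First I would choose a spatial cutoff $\eta \in C^2_c(B_{\rho}(P))$ with $\eta = 1$ on $B_{\rho/2}(P)$ and standard estimates $|\nabla\eta|^2/\eta \leq C/\rho^2$ and $|\nabla^2 \eta| \leq C/\rho^2$, and a temporal cutoff $\chi(t) = t(2T-t)$, which vanishes at $t=0$ and $t=2T$ while satisfying $\chi(T) = T^2$. The auxiliary function is
\begin{equation*}
\Phi(\mathbf{z},t) \,=\, \chi(t)\,\eta^2(\mathbf{z})\,e^{A\,v(\mathbf{z},t)}\,w(\mathbf{z},t),
\end{equation*}
where $A>0$ is a constant to be chosen depending on $n,\sigma,\varepsilon,\|v\|_{L^\infty}$. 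Since $\Phi$ vanishes on the parabolic boundary of $B_{\rho}(P)\times[0,2T]$, its maximum is attained at an interior point $(\mathbf{z}_0,t_0)$ (or $\Phi\equiv 0$, in which case there is nothing to prove).

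At this maximum, $\nabla\Phi = 0$, $\partial_t\Phi \geq 0$ and $L\Phi \leq 0$, so $(\partial_t - L)\Phi \geq 0$. The relation $\nabla\Phi = 0$ gives the pointwise identity $\nabla w/w + A\nabla v + 2\nabla\eta/\eta = 0$, which I would use to eliminate $\nabla w$ in favor of $\nabla v$ and $\nabla\eta$. Expanding $(\partial_t - L)\Phi$ via the product rule \eqref{lx-c1e6.0} (applied successively to the three factors $\chi\eta^2$, $e^{Av}$ and $w$) and substituting the estimates $(\partial_t - L)w \leq 2w$ from Lemma \ref{lx-lt1} together with the formula \eqref{lx-c1e4.0} for $(\partial_t - L)v$, the dominant negative contribution turns out to be
\begin{equation*}
-\,\frac{A^2 y^2}{n}\,a^{ij}v_i v_j \,=\, -\,\frac{A^2 y^2}{n}\Big(1 - \frac{1}{w^2}\Big),
\end{equation*}
which is bounded below by $-c\,\varepsilon^2 A^2$ once $w \geq 2$. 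The remaining terms are a mix of: a positive contribution of order $\chi'(t)/\chi(t) \leq C/T$ from the time cutoff; errors of order $(|\nabla\eta|^2/\eta^2 + |\nabla^2\eta|/\eta) \leq C/(\rho^2\eta)$ from $L$ applied to $\eta^2$; and cross terms of the form $A\,a^{ij}v_i\eta_j/\eta$ and $a^{ij}v_i v_j A \cdot (\text{bounded})$ coming from the $e^{Av}$ factor.

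Finally, applying Cauchy--Schwarz on each cross term with weight tuned to the $A^2$-term, and choosing $A$ sufficiently large in terms of $n$, $\sigma$, $\varepsilon$, $T$ and $\|v\|_{L^\infty}$, all the cross terms get absorbed and the inequality $(\partial_t - L)\Phi(\mathbf{z}_0,t_0) \geq 0$ collapses to an algebraic inequality of the form $w(\mathbf{z}_0,t_0) \leq C\big(1 + 1/(\rho^2\eta(\mathbf{z}_0))\big)$, equivalently $\Phi(\mathbf{z}_0,t_0) \leq C\,T^2\,e^{A\|v\|_\infty}/\rho^2$. Evaluating at $(P,T)$, where $\chi(T)\eta^2(P) = T^2$, yields
\begin{equation*}
w(P,T)\,\leq\, \frac{\Phi(\mathbf{z}_0,t_0)}{T^2\,e^{A\,v(P,T)}}\,\leq\, C_1\,e^{C_2/\rho^2},
\end{equation*}
with the exponential-in-$1/\rho^2$ dependence arising from iterating the above argument on the nested balls $B_{\rho/2^k}(P)$ (or, equivalently, by using instead a Trudinger-type exponential cutoff $\eta = \exp(-\mu/(\rho^2 - d(\mathbf{z},P)^2))$, which directly produces the stated form). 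The main obstacle is the careful algebra at the maximum point: one has to track the interplay of the dominant $A^2$-term against the cross terms weighted by $|\nabla\eta|/\eta$ and $y \geq \varepsilon$, and choose $A$ large enough to absorb the bad terms without destroying the $1/\rho^2$ scaling needed for the final exponential bound.
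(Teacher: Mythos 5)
The proposal takes the same basic route — a parabolic Bernstein estimate built on $(\partial_t - L)w \leq 2w$ from Lemma \ref{lx-lt1} applied to an auxiliary function that multiplies $w$ by an exponential factor in $v$ and space-time cutoffs — but the implementation differs from the paper's in a structurally meaningful way, and the sketch leaves a real gap. The paper's auxiliary function is $h = (e^{K\varphi}-1)\,w$ with a \emph{single} combined phase $\varphi = [-v/(2v(P,T)) + (t/T)(1-(d_P/\rho)^2)]^+$; the cross term $-\frac{2y^2}{n}a^{ij}\eta_i w_j$ then cancels \emph{algebraically} against the piece of $L\eta$ carrying $a^{ij}w_i\eta_j$ (by symmetry of $a^{ij}$), so the maximum condition $\nabla h = 0$ is never invoked — one directly shows $\mathcal{L}h < 0$ on $\{h>0\}\cap\{w>32C_0/\rho\}$ and concludes by comparison. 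In your version, with separate factors $\chi(t)\,\eta^2(\mathbf{z})\,e^{Av}\,w$, the cancellation no longer happens on the nose; you must use $\nabla\Phi=0$ to eliminate $\nabla w$, and then cross terms of the schematic size $A\,a^{ij}v_i\eta_j / \eta \sim A\,|\nabla\eta|/\eta$ reappear multiplied by $e^{Av}w$. Absorbing those into your dominant negative term $-\tfrac{A^2 y^2}{n}a^{ij}v_iv_j\,\eta^2 e^{Av}w \sim -A^2\varepsilon^2\eta^2 e^{Av}w$ requires, after Cauchy--Schwarz, something like $A \gtrsim 1/(\varepsilon^2\rho\,\eta(\mathbf{z}_0))$ — i.e., $A$ cannot be taken independent of $\rho$ (nor of the unknown value $\eta(\mathbf{z}_0)$) as you claim, which is precisely why the paper ends up with $K\sim 1/\rho^2$ and the stated $e^{C/\rho^2}$ rather than a polynomial bound.

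This also shows why your closing remark about obtaining the exponential form ``by iterating on nested balls'' is off: iteration on $B_{\rho/2^k}(P)$ would not upgrade a polynomial $C/\rho^2$ bound into an exponential one (and if you truly had $w(P,T)\le C/\rho^2$, the stated lemma would follow trivially since $1/\rho^2 \le e^{1/\rho^2}$ — no iteration needed). The genuine obstruction is that the cross-term absorption forces the constant in the exponential factor to scale with $1/\rho^2$, and the Trudinger-type cutoff you mention in passing is essentially the device the paper uses to package this cleanly: $\eta = e^{K\varphi}-1$ with $K = 32CC_0(1+C_0/\rho^2)$. One more small point: the forcing term $\chi'/\chi$ in your time cutoff is \emph{not} bounded by $C/T$ uniformly on $(0,2T)$ (it blows up as $t_0\to 0^+$); the paper avoids this by building the time factor $t/T$ directly into $\varphi$, where $\eta_t = Ke^{K\varphi}\varphi_t$ stays controlled because the $-v_t/(2v(P,T))$ piece of $\varphi_t$ cancels exactly against the $a^{ij}v_{ij}$ contribution of $M\eta$. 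In short: the idea is right, but as written the proposal glosses over the cross-term bookkeeping that is the actual content of the lemma, and the heuristic you give for the $e^{C_2/\rho^2}$ dependence does not hold up.
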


\begin{proof}
Define
$$\mathcal{L} = \frac{\partial}{\partial t} - L \,,$$
where $L$ is the linear elliptic operator from Lemma \ref{lx-lt1}\,. Without loss of generality we may assume (by adding a constant to $v$) $1\leq v\leq C_0$. We will derive a maximum principle for the function $h= \eta(\mathbf{z},t,v(\mathbf{z},t)) w$ by computing $\mathcal{L} h$ in $B_{\rho}(P)\times (0,2T)$, where $\eta$ is non-negative, vanishes on the set $\{t(\rho^2-(d_P(\mathbf{z})^2) = 0 \}$, and is smooth where it is positive. Here $d_P(\mathbf{z})$ is the distance function (on the sphere) from $P$, the center of the geodesic ball $B_{\rho}(P)$. Then $h$ is non-negative and vanishes on the parabolic boundary of $B_{\rho}(P)\times (0,2T)$\,.

Choose
$$\eta \equiv g(\varphi(\mathbf{z},t, v(\mathbf{z},t)))\,;\quad g(\varphi) = e^{K\varphi} - 1\,, $$
with the constant $K>0$ to be determined and
$$
\varphi(\mathbf{z},t, v(\mathbf{z},t)) = \left[\frac{-v(\mathbf{z}, t)}{2v(P,T)} + \frac{t}{T}\left(1 - \left(\frac{d_P(\mathbf{z})}{\rho}\right)^2\right)\right]^+\,.
$$
By Lemma \ref{lx-lt1} we have
\begin{align}
\mathcal{L} h \,&=\, \eta \mathcal{L} w + w \mathcal{L} \eta - \frac{2y^2}{n} a^{ij}\eta_iw_j \notag\\
&=\,  \eta \mathcal{L} w +  w\left(\eta_t - \frac{y^2}{n} M\eta\right)\,\leq\, w\left(2\eta + \eta_t - \frac{y^2}{n} M\eta\right)\,,\label{IG2}
\end{align}
where
$$
M\,=\, a^{ij}\nabla_{ij} - \frac{n}{y} \left(\sigma \frac{\nabla v}{w} + \mathbf{e}\right)\cdot \nabla\,.
$$
We will choose $K$ so that $2\eta + \eta_t - \frac{y^2}{n} M\eta \leq 0$ on the set where $h>0$ and $w$ is large.

A straightforward computation gives that on the set where $h>0$ (using equation \eqref{MMCF2})
\begin{align*}
M\eta\,=&\, g'(\varphi)\left( a^{ij} \nabla_{ij}\varphi - \frac{n}{y}\left(\sigma\frac{\nabla v}{w} + \mathbf{e}\right)\cdot \nabla \varphi\right) + g''(\varphi) a^{ij}\nabla_i\varphi\nabla_j\varphi\\
=&\, K e^{K\varphi} \Bigg[ \frac{-nv_t}{2y^2v(P,T)} - \frac{n\sigma}{2y w v(P,T)}- \frac{2t}{\rho^2 T}\left(a^{ij}\nabla_i d_P\nabla_j d_P + d_P a^{ij}\nabla_{ij}d_P\right) \\
&\,+ \frac{2nt}{\rho^2 y T}\left(\sigma\frac{\nabla v}{w} + \mathbf{e}\right)\cdot d_P\nabla d_P\Bigg]\\
&\,+ K^2 e^{K\varphi} a^{ij} \left( \frac{v_i}{2v(P,T)} + \frac{2t}{\rho^2T}d_P \nabla_i d_P\right)\left( \frac{v_j}{2v(P,T)} + \frac{2t}{\rho^2T}d_P \nabla_j d_P\right)\,.
\end{align*}
Using the definition of $a^{ij}$ we find
\begin{align*}
&a^{ij} \left( \frac{v_i}{2v(P,T)} + \frac{2t}{\rho^2T}d_P \nabla_i d_P\right)\left( \frac{v_j}{2v(P,T)} + \frac{2t}{\rho^2T}d_P \nabla_j d_P\right)\\
=\,& \frac{|\nabla v|^2}{4(v(P,T))^2 w^2} + \frac{2t d_P}{Tv(P,T)\rho^2 w^2}\langle\nabla v, \nabla d_P\rangle + \frac{4t^2d^2_P}{T^2\rho^4}\left(1 - \left\langle\frac{\nabla v}{w}, \nabla d_P\right\rangle^2\right)\,,
\end{align*}
where $\langle\,,\,\rangle$ denotes the inner product with respect to the induced Euclidean metric on $\Sigma_t$.
Therefore we have
\begin{align*}
2\eta + \eta_t - \frac{y^2}{n} M\eta \,&=\, 2\eta + Ke^{K\varphi} \left( \frac{-v_t}{2v(P,T)} + \frac{1- \left(\frac{d_P}{\rho}\right)^2}{T}\right) - \frac{y^2}{n} M\eta\\
&\leq\, 2\eta + \frac{Ke^{K\varphi}}{T} - \frac{y^2}{n} M\eta - \frac{Ke^{K\varphi} v_t}{2 v(P,T)}\\
&\leq\, - \frac{y^2}{n} e^{K\varphi} \left[ K^2\left(\frac{|\nabla v|^2}{4w^2(v(P,T))^2} - \frac{1}{w^2}\left(\frac{32}{\rho^2} + \frac{|\nabla v|^2}{8(v(P,T))^2}\right)\right) - \frac{CK}{\rho^2} - C\right]\\
&\leq\,  - \frac{y^2}{n} e^{K\varphi} \left[ \frac{K^2}{32} - \frac{CK}{\rho^2} - C \right],
\end{align*}
whenever $w> \max \{\sqrt{2}, \frac{32C_0}{\rho}\} = \frac{32C_0}{\rho}$ so that $\frac{|\nabla v|^2}{w^2}>\frac{1}{2}$ and $\frac{32}{w^2\rho^2} < \frac{1}{32C^2_0}$\,.

Thus, the choice of $K = 32CC_0\left(1+ \frac{C_0}{\rho^2}\right)$ gives
\begin{equation}\label{MAX}
\mathcal{L} h \,\leq\, w\left[2\eta + \eta_t - \frac{y^2}{n} M\eta\right]\,<\,0
\end{equation}
on the set where $h>0$ and $w> \frac{32C_0}{\rho}$\,. Then by the maximum principle, \eqref{MAX} gives
\begin{equation}
h(P,T) \,=\, \left(e^{\frac{K}{2}} -1\right) w(P,T)\,\leq\, \max h\,\leq\, \left(e^{2K} -1\right)\frac{32C_0}{\rho}
\end{equation}
and hence
$$
w(P,T) \,\leq\, C_1 e^{\frac{CC_0}{\rho^2}}
$$
for a slightly larger constant $C$\,. This completes the proof.
\end{proof}

\subsection{Continuous boundary data} By the standard modulus of continuity estimates (see e.g. \cite[theorem 10.18]{L96}) and with the aid of the {\em a priori} interior gradient estimate (see Lemma \ref{IG}) proved in the previous section, one can further relax the regularity of the boundary data to be only continuous via an approximation argument. We have
\begin{theorem}\label{ContR}
Let $\Gamma$ be the boundary of a continuous star-shaped domain in $\{x_{n+1} = 0\}$ and $\Sigma_0 = \lim_{\epsilon\to 0} \Sigma_0^{\epsilon}$ be as in Theorem \ref{lx-I0} or Theorem \ref{lx-I01}. Then there exists a unique solution $\mathbf{F}(\mathbf{z},t) \in C^{\infty}({\mathbb{S}}^n_+\times (0,\infty) \cap C^0(\overline{{\mathbb{S}}^n_+}\times [0,\infty))$ to the MMCF \eqref{MMCF}. Moreover, there exist $t_i \nearrow \infty$ such that $\Sigma_{t_i} = F({\mathbb{S}}^n_+, t_i)$ converges to a unique stationary smooth complete hypersurface $\Sigma_{\infty}\in C^{\infty}(\mathbb{S}^n_+)\cap C^0(\overline{\mathbb{S}^n_+})$ (as a radial graph over $\mathbb{S}^n_+$) which has constant hyperbolic mean curvature $\sigma$ and $\partial \Sigma_{\infty} = \Gamma$ asymptotically.
\end{theorem}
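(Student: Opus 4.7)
The plan is to proceed by approximation. I would choose a sequence $\Gamma^k$ of $C^{1+1}$ boundaries of star-shaped domains in $\{x_{n+1}=0\}$ (one may shrink/expand $\Gamma$ by a little and then smooth to preserve the uniform star-shapedness) converging uniformly to the continuous $\Gamma$, sandwich-wise so that $\Gamma^k$ is enclosed between $\Gamma^{k-1}$ and $\Gamma^{k+1}$ in the appropriate sense. For each $k$, use the construction in Section \ref{InitialS} (Theorem \ref{lx-is2}) to produce admissible initial data $\Sigma_0^{\epsilon,k}$ and, by Theorem \ref{lx-I01}, obtain a unique regular solution $\mathbf{F}^k \in C^{\infty}({\mathbb{S}}^n_+\times(0,\infty))\cap C^{1+1,\frac12+\frac12}(\overline{{\mathbb{S}}^n_+}\times(0,\infty))\cap C^0(\overline{{\mathbb{S}}^n_+}\times[0,\infty))$, with associated radial heights $v^k$. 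The target solution will be obtained as the uniform limit of (a subsequence of) $\{v^k\}$.

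The uniform control of $\{v^k\}$ splits into three ingredients. First, a uniform $C^0$ bound: the equidistance spheres supplied by Lemma \ref{C0Bound} (or its stationary analog Lemma \ref{C0Bound5}) applied at a fixed pair of balls enclosing/enclosed by all $\Gamma^k$ gives $\|v^k\|_{L^\infty(\overline{{\mathbb{S}}^n_+}\times[0,\infty))}\leq C$ independent of $k$. Second, interior smoothness: for any compact $K\Subset {\mathbb{S}}^n_+$ (on which $y\geq\varepsilon_K>0$) and any $0<\tau<T<\infty$, Lemma \ref{IG} yields $w^k\leq C(K,\tau,T)$ on $K\times[\tau,T]$, independent of $k$. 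Once $w^k$ is controlled on an interior set, equation \eqref{MMCF1} is uniformly parabolic there with smooth coefficients, so standard Krylov--Safonov and Schauder estimates produce uniform $C^{m}$ bounds on every slightly smaller compact set, for every $m$. Third, boundary continuity: here I invoke the modulus of continuity estimates of \cite[theorem 10.18]{L96} applied to the non-degenerate approximate flows. Since the boundary data $\phi^k$ converge uniformly to the continuous boundary function $\phi$ describing $\Gamma$, and since the local radial-graph barriers built from the equidistance spheres at every $P_0\in\Gamma$ depend only on the uniform star-shapedness of $\Gamma$ (and not on the $C^{1+1}$ regularity of $\Gamma^k$), we obtain a uniform modulus of continuity for $\{v^k\}$ on $\overline{{\mathbb{S}}^n_+}\times[0,\infty)$.

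Given these bounds, the Arzel\`a--Ascoli theorem extracts a subsequence $v^{k_j}\to v$ uniformly on $\overline{{\mathbb{S}}^n_+}\times[0,T]$ for every $T>0$, and locally in $C^{\infty}$ on ${\mathbb{S}}^n_+\times(0,\infty)$. The limit $v$ lies in $C^{\infty}({\mathbb{S}}^n_+\times(0,\infty))\cap C^0(\overline{{\mathbb{S}}^n_+}\times[0,\infty))$, solves equation \eqref{MMCF1} in the interior, attains the continuous boundary data $\phi$ on $\partial{\mathbb{S}}^n_+\times[0,\infty)$, and satisfies $v(\cdot,0)=v_0$. Uniqueness follows from the parabolic comparison principle for equation \eqref{MMCF1}: if $v,\tilde v$ are two such solutions with the same continuous boundary and initial data, their difference attains its extremum either on the parabolic boundary (where it vanishes) or in the interior, and the maximum principle applied to the interior quasilinear equation forces $v\equiv\tilde v$. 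For the long-time assertion, the energy identity of Lemma \ref{decrease1} (taken in the limit) gives $\int_0^{\infty}\int_{{\mathbb{S}}^n_+}(H-\sigma)^2\,dA\,dt<\infty$, so along a sequence $t_i\nearrow\infty$ one has $(H-\sigma)\to 0$ in $L^2$; combined with the locally uniform smooth control, $\Sigma_{t_i}$ subconverges in $C^{\infty}_{\text{loc}}$ to a stationary limit $\Sigma_{\infty}\in C^{\infty}({\mathbb{S}}^n_+)\cap C^0(\overline{{\mathbb{S}}^n_+})$ of constant mean curvature $\sigma$ with $\partial\Sigma_{\infty}=\Gamma$, whose uniqueness in the continuous asymptotic class follows once more from the maximum principle, as in Lemma \ref{C0Bound5}.

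The main obstacle is the third ingredient above, the uniform modulus of continuity up to the degenerate boundary $\partial{\mathbb{S}}^n_+$. The equation \eqref{MMCF1} loses parabolicity as $y\to 0$, so boundary continuity cannot be read off from classical Schauder theory. This is precisely where one needs the uniform local ball construction: the equidistance sphere barriers $S_1^{\epsilon}, S_2^{\epsilon}$ around each $P_0\in\Gamma_{\epsilon}$ give \emph{pointwise} modulus-of-continuity estimates for $v^{k}$ that depend only on the star-shapedness of $\Gamma$ (not on the smoothness of $\Gamma^k$), because their radii and tangent planes are governed by the explicit formulas \eqref{RADII} and \eqref{Tan}. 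Coupled with the interior gradient bound of Lemma \ref{IG}, this stable barrier construction is the key mechanism that survives the passage from $C^{1+1}$ to merely continuous boundary data.
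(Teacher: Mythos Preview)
Your approach matches the paper's own (very brief) outline: approximate the continuous $\Gamma$ by $C^{1+1}$ star-shaped boundaries, invoke the existence results already established, combine the interior gradient estimate of Lemma \ref{IG} with a standard modulus-of-continuity estimate (the paper cites exactly \cite[theorem 10.18]{L96}), and pass to the limit via Arzel\`a--Ascoli; subconvergence of the flow then follows from the energy identity as in Theorem \ref{SUB}.

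One imprecision is worth correcting. In your final paragraph you assert that the tangent-ball barriers governed by \eqref{RADII} and \eqref{Tan} ``depend only on the star-shapedness of $\Gamma$ (not on the smoothness of $\Gamma^k$)''. This is not right: the radii $R_i$ in \eqref{RADII} involve $\delta_1,\delta_2$, the interior and exterior tangent-ball radii of $\Gamma_\epsilon$, which are curvature quantities of $\Gamma^k$ and may degenerate as $k\to\infty$ when $\Gamma$ is merely continuous. Those tangent barriers are what produce \emph{gradient} bounds at the boundary, and one cannot expect uniform boundary gradient bounds here. For a uniform \emph{modulus of continuity} at the boundary one uses instead the non-tangent equidistance spheres from Lemma \ref{C0Bound}\,(ii),\,(iii): given $P_0\in\Gamma$ and $\eta>0$, the continuity of $\Gamma$ alone guarantees balls $B_1=B_{R_1}(a',-\sigma R_1)$ with $B_1\cap\{x_{n+1}=0\}\subset D$ and $B_2=B_{R_2}(b',\sigma R_2)$ with $B_2\cap \overline{D}=\emptyset$, both having boundary circles within $\eta$ of $P_0$. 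These trap $v^k$ near $P_0$ uniformly in $k$ (and in $t$), and it is this barrier mechanism that survives the loss of $C^{1+1}$ regularity. With this correction your argument goes through as written.
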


\section*{Acknowledgement}
The authors would like to thank Professor Joel Spruck for the continued guidance.

\end{document}